\newtheorem{theorem}{Theorem}[section]
\newtheorem{lemma}[theorem]{Lemma}
\newtheorem{remark}[theorem]{Remark}
\newtheorem{prop}[theorem]{Proposition}
\newtheorem{example}[theorem]{Example}
\newtheorem{corollary}[theorem]{Corollary}
\newtheorem{examples}[theorem]{Examples}
\numberwithin{equation}{section}
\newcommand{\R}{{\mathbb R}}
\newcommand{\C}{{\mathbb C}}
\newcommand{\N}{{\mathbb N}}
\newcommand{\cL}{{\mathcal L}}
\newcommand{\cK}{{\mathcal K}}
\newcommand{\cU}{{\mathcal U}}
\newcommand{\ve}{\varepsilon}
\newcommand{\su}{\subseteq}
\newcommand{\sC}{\mathsf{C}}
\newcommand{\ov}{\overline}
\newcommand\Ker{\mathop{\rm Ker}}
\begin{document}

\title{The  Ces\`aro operator  in weighted $\ell_1$  spaces}

\author{Angela\,A. Albanese, Jos\'e Bonet, Werner \,J. Ricker}

\thanks{\textit{Mathematics Subject Classification 2010:}
Primary 47A10, 47B37; Secondary  46B45, 47A16, 47A35, 47B07.}
\keywords{Ces\`aro operator, weighted $l_1$  space, spectrum, compact operator, mean ergodic operator. }
\thanks{This article is accepted for publication in Mathematische Nachrichten DOI: 10.1002/mana.201600509}

\address{ Angela A. Albanese\\
Dipartimento di Matematica e Fisica ``E. De Giorgi''\\
Universit\`a del Salento- C.P.193\\
I-73100 Lecce, Italy}
\email{angela.albanese@unisalento.it}

\address{Jos\'e Bonet \\
Instituto Universitario de Matem\'{a}tica Pura y Aplicada
IUMPA \\
Universitat Polit\`ecnica de
Val\`encia\\
E-46071 Valencia, Spain} \email{jbonet@mat.upv.es}

\address{Werner J.  Ricker \\
Math.-Geogr. Fakult\"{a}t \\
 Katholische Universit\"{a}t
Eichst\"att-Ingol\-stadt \\
D-85072 Eichst\"att, Germany}
\email{werner.ricker@ku-eichstaett.de}
\markboth{A.\,A. Albanese, J. Bonet, W. \,J. Ricker}%
{\MakeUppercase{ }}

\begin{abstract}
Unlike for $\ell_p$, $1<p\leq\infty$, the discrete Ces\`aro operator $\sC$ does \textit{not} map $\ell_1$ into itself. We identify precisely those weights $w$ such that $\sC$ does map $\ell_1(w)$ continuously into itself. For these weights a complete description of the eigenvalues and the spectrum of $\sC$ are presented. It is also possible to identify all $w$ such that $\sC$ is a compact operator in $\ell_1(w)$. The final section  investigates  the mean ergodic properties of $\sC$ in $\ell_1(w)$. Many examples are presented in order to supplement the results and to illustrate the  phenomena that occur.
\end{abstract}

\maketitle

\markboth{A.\,A. Albanese, J. Bonet and W.\,J. Ricker}%
{\MakeUppercase{The discrete Ces\`aro operator in weighted $\ell_1$  spaces }}

\section{Introduction}

The discrete Ces\`aro operator $\sC$ is defined on the linear space $\C^\N$ (consisting of all scalar sequences) by
\begin{equation}\label{eq.Ce-op}
\sC x:=\left(x_1, \frac{x_1+x_2}{2}, \ldots, \frac{x_1+\ldots +x_n}{n}, \ldots\right),\quad x=(x_n)_{n\in\N}\in \C^\N.
\end{equation}
The operator $\sC$ is said to \textit{act} in a vector subspace  $X\su\C^\N$ if it maps $X$ into itself. Of particular interest is the situation when $X$ is a Banach space. Two fundamental questions in this case are: Is $\sC\colon X\to X$ continuous and, if so, what is its spectrum? Amongst the classical Banach spaces $X\su \C^\N$ where answers are known we mention $\ell_p$ ($1<p<\infty$), \cite{BHS}, \cite{L}, and $c_0$, \cite{L}, \cite{R},  both $c$, $\ell_\infty$, \cite{A-B}, \cite{L}, as well as $ces_p$, $p\in \{0\}\cup (1,\infty)$, \cite{C-R},  the spaces of bounded variation $bv_0$, \cite{Ok}, and $bv_p$, $1\leq p<\infty$, \cite{A-B2}, and the Bachelis spaces $N^p$, $1<p<\infty$, \cite{CR-1}. For $\sC$ acting in the \textit{weighted} Banach spaces $\ell_p(w)$, $1<p<\infty$, and $c_0(w)$ we refer to \cite{ABR}, \cite{ABR-9}. There is no claim that this list of spaces (and references) is complete; see also \cite{B}.

One  aim of this paper is to investigate the two questions mentioned above for $\sC$ acting in the weighted Banach space $\ell_1(w)$. Unlike for the setting of $\ell_p(w)$, $1<p<\infty$,  where the corresponding paradigm space for $\sC$ is $\ell_p$, $1<p<\infty$,  the ``paradigm space'' $\ell_1$ is \textit{not available} as a guideline for $\sC$ in $\ell_1(w)$ because $\sC$ does \textit{not} act in $\ell_1$. Hence, it is unclear what to expect when $\sC$ acts in $\ell_1(w)$.

So, let $w=(w(n))_{n=1}^\infty$ be a  sequence, always assumed to be \textit{bounded} and \textit{strictly} positive. Define the vector space
\[
\ell_1(w):=\left\{x=(x_n)_{n\in \N}\in \C^\N\colon \sum_{n=1}^\infty w(n)|x_n|<\infty\right\},
\]
equipped with the norm  $\|x\|_{1,w}:=\sum_{n\in\N}w(n)|x_n|$, for   $x\in \ell_1(w)$.
Then  $\ell_1(w)$  is isometrically isomorphic to  $\ell_1$ via the linear multiplication operator $\Phi_w\colon \ell_1(w)\to \ell_1$ given by
\[
 x=(x_n)_{n\in\N}\mapsto \Phi_w(x):=(w(n)x_n)_{n\in\N}.
\]
Accordingly,  $\ell_1(w)$ is a  weakly sequentially complete Banach space with the Schur property, \cite[pp.218--220]{ME}. Its  dual  space $(\ell_1(w))'$   is the Banach space $\ell_{\infty}(u)$ with the norm
$\|x\|_{\infty,u}:=\sup_{n\in\N}u(n)|x_n|$, for   $x=(x_n)_{n\in \N}\in \ell_\infty(u)$,
where  $u(n):=w(n)^{-1}$ for $n\in\N$. The closed subspace
\[
\left\{x=(x_n)_{n\in \N}\in \C^\N\colon \lim_{n\to\infty} u(n)|x_n|=0\right\}
\]
of $\ell_{\infty}(u)$ is denoted by $c_{0}(u)$ and the restriction of the norm $\|\cdot \|_{\infty,u}$ to $c_0(u)$ is written as $\|\cdot \|_{0,u}$. Of course, the bidual $c_0(u)''=\ell_\infty(u)$ and the dual $c_0(u)'=\ell_1(w)$. Clearly, the Banach spaces $\ell_\infty(u)$ and $c_0(u)$ are also defined and have the above mentioned properties for \textit{every} strictly positive sequence $u=(u(n))_{n\in\N}$, not just for $u=w^{-1}$.
 The canonical vectors $e_k:=(\delta_{kn})_{n\in\N}$, for $k\in\N$, form an unconditional basis in  $\ell_1(w)$. Consequently, whenever $\sC$ does act in $\ell_1(w)$, then it is necessarily continuous  (via the Closed Graph Theorem).
If $\inf_{n\in\N}w(n)>0$, then $\ell_1(w)=\ell_1$ with equivalent norms and so we are in a space in which $\sC$ does \textit{not} act. Accordingly, we are only interested in  the case when $\inf_{n\in\N}w(n)=0$. Of course, $\Phi_w$ is also defined on all of $\C^\N$ in which case it is a vector space isomorphism of $\C^\N$ onto itself.

For any Banach space $X$, let $I$ denote the identity operator on $X$ and $\cL(X)$  the vector space of all continuous linear operators from $X$ into itself. The \textit{spectrum} and the \textit{resolvent set}  of $T\in \cL(X)$ are denoted by $\sigma(T)$ and $\rho(T)$, respectively, \cite[Ch. VII]{DSI}. The set of all \textit{eigenvalues} of $T$, also called the \textit{point spectrum} of $T$, is denoted by $\sigma_{pt}(T)$. The \textit{spectral radius} $r(T):=\sup\{|\lambda|\colon \lambda\in\sigma(T)\}$  always satisfies $r(T)\leq \|T\|$, \cite[p.567]{DSI}.  The ideal of  compact operators from $X$ into a Banach space $Y$ is denoted by $\cK(X,Y)$. If $X=Y$, we simply write $\cK(X)$. The dual Banach space of $X$ is denoted by $X'$
 and the dual operator of $T\in \cL(X)$ by $T'\in \cL(X')$.

In Section 2 we identify all weights $w$ such that $\sC$ acts  in $\ell_1(w)$; see Proposition \ref{P1-C}(i). Necessarily $\inf_{n\in\N}w(n)=0$ (cf. Remark \ref{R.VW}(i)) but this condition is far from sufficient; see Examples \ref{esempi-C}(i), (iii). Moreover, this necessary condition cannot be replaced by $w\in c_0$; see Remark \ref{R.VW}(ii).
 Even if $w$ is a rapidly decreasing sequence it still need \textit{not} follow that $\sC$ acts in $\ell_1(w)$; see Remark \ref{R.W_N}(ii). The \textit{compactness} of $\sC$ in $\ell_1(w)$ is characterized in Proposition \ref{P1-C}(ii). A useful sufficient condition for $w$, ensuring the compactness of $\sC$ in $\ell_1(w)$, is the requirement that
\begin{equation}\label{eq.I-limite}
\limsup_{n\to\infty}\frac{w(n+1)}{w(n)}\in [0,1);
\end{equation}
see Proposition \ref{P2-C}. Applications of \eqref{eq.I-limite}
to particular weights are given in parts (i)-(iv) of Examples \ref{E.COP}. On the other hand, the weights given in (v), (vi) of Examples \ref{E.COP} show that the condition \eqref{eq.I-limite} is not necessary for the compactness of $\sC$ in $\ell_1(w)$. A comparison type result for compactness (and also for continuity) is presented in Proposition \ref{P.Comparison}. The usefulness of this criterion is illustrated via Example \ref{E.ConNOComp}. Somewhat surprisingly, there exist rapidly decreasing weights $w$ for which $\sC$ acts  in $\ell_1(w)$ but, \textit{fails} to be compact; see the weight $v$ in Example \ref{E.Comparison}(ii).

Section 3 investigates the \textit{spectrum} of $\sC$, provided that $\sC$ acts  in $\ell_1(w)$; for brevity we indicate this by writing $\sC^{(1,w)}$ for $\sC$ or $\sC^{(1,w)}\in \cL(\ell_1(w))$. Relevant for determining   $\sigma(\sC^{(1,w)})$ are the sets
\[
R_w:=\{t\in\R\colon \sum_{n=1}^\infty n^tw(n)<\infty\}\ \mbox{ and }\ S_w(1):=\{s\in \R\colon \sup_{n\in\N}\frac{1}{n^sw(n)}<\infty\}.
\]
Whenever $R_w\not=\R$ (resp. $S_w(1)\not=\emptyset$) we define $t_0:=\sup R_w$ (resp. $s_1:=\inf S_w(1)$). Useful connections between $t_0$, $s_1$, the sets $R_w$, $S_w(1)$ and the condition that $w$ is  rapidly decreasing  are presented in Propositions \ref{P2-Sp} and  \ref{P.S_R}. These propositions  are needed to establish the two main results of the section. Theorem \ref{t:sp-dec-N} characterizes $\sigma(\sC^{(1,w)})$ and identifies the point spectrum $\sigma_{pt}(\sC^{(1,w)})$.  It turns out that $\sigma_{pt}(\sC^{(1,w)})=\emptyset$ precisely when $w\not\in \ell_1$ (cf. Remark \ref{R.3n}(iii)). Whenever $\sC^{(1,w)}\in \cL(\ell_1(w))$ and $S_w(1)\not=\emptyset$, necessarily $s_1>0$ and
\[
\left\{\lambda\in \C\colon \left|\lambda-\frac{1}{2s_1}\right|\leq \frac{1}{2s_1}\right\}\cup\left\{\frac{1}{n}\colon n\in\N\right\}\su \sigma(\sC^{(1,w)});
\]
see Proposition \ref{t:sp-dec}. In particular, $\sC^{(1,w)}$ cannot then be compact. This includes such weights as $w_\alpha:=(\frac{1}{n^\alpha})_{n\in\N}$ for all $\alpha>0$ (cf. Example \ref{es.enne}) and others. On the other hand, if $\sC^{(1,w)}$ is compact, then
\begin{equation}\label{e.I-C}
\sigma_{pt}(\sC^{(1,w)})=\left\{\frac{1}{n}\colon n\in\N\right\}\ \mbox{ and }\ \sigma(\sC^{(1,w)})=\{0\}\cup \sigma_{pt}(\sC^{(1,w)})
\end{equation}
and the weight $w$ is necessarily rapidly decreasing.  The converse is not valid in general, i.e., there exist rapidly decreasing weights $w$ such that $\sC^{(1,w)}\in \cL(\ell_1(w))$, the spectra of $\sC^{(1,w)}$ are given by \eqref{e.I-C} but, $\sC^{(1,w)}$ is not compact; see Example \ref{ex.nocompact}. For each $k\in\N$ there exists a weight $w$ such that $\sigma_{pt}(\sC^{(1,w)})=\{\frac{1}{n}\colon 1\leq n\leq k\}$; see Example \ref{es.enne}.

The final section treats various \textit{mean ergodic} properties of $\sC^{(1,w)}$. Also relevant is the \textit{power boundedness} of $\sC^{(1,w)}$, i.e., $\sup_{n\in\N}\|(\sC^{(1,w)})^n\|<\infty$, and the weaker condition of  \textit{Ces\`aro boundedness}  (cf. Section 4 for the definition). We record a few sample results. For instance, if $\sC^{(1,w)}\in \cL(\ell_1(w))$, then $\sC^{(1,w)}$ is power bounded if and only if the sequence of its iterates $\{(\sC^{(1,w)})^n\}_{n\in\N}$ is convergent in the strong operator topology of $\cL(\ell_1(w))$ to the projection onto the null space $\Ker (I- \sC^{(1,w)})$; see Theorem \ref{T-conv}(i).  Moreover, the power boundedness of $\sC^{(1,w)}$implies that $w\in\ell_1$ (cf. Lemma \ref{L-Mean1}).
It is also established that $\sC^{(1,w)}$ is mean ergodic if and only if it is Ces\`aro bounded (cf. Theorem \ref{T-conv}(ii)). Such results do not hold for general Banach space operators; see Remark \ref{R.NSW}. Intimately related to the \textit{uniform} mean ergodicity of  $\sC^{(1,w)}$ (indeed, for any Banach space operator) is the closedness of the range of $I-\sC^{(1,w)}$ in $\ell_1(w)$. Under the natural restriction that $w\in \ell_1$, this property is equivalent to the requirement
\begin{equation}\label{e.I-UME}
\sup_{m\in\N}\frac{1}{mw(m+1)}\sum_{n=m+1}^\infty w(n)<\infty;
\end{equation}
see Proposition \ref{L.range_closed}.
The condition \eqref{e.I-UME} also suffices for $\sC^{(1,w)}$ to be  both power bounded and uniformly mean ergodic (cf. Proposition \ref{P.SC-Bound}). According to Proposition \ref{P.SC-Bound-C}, the compactness of $\sC^{(1,w)}$ always implies that \eqref{e.I-UME} is satisfied; the converse is not true in general (cf. Example \ref{R.L-Compact}).

An effort has been made to present many and varied examples, both to supplement the  results and to illustrate the phenomena that occur.

\section{Continuity and compactness of $\sC$ in $\ell_1(w)$}

Given two  strictly positive sequences $v=(v(n))_{n=1}^\infty$  and $w=(w(n))_{n=1}^\infty$, let $T_{v,w}\colon \C^\N\to \C^\N$ denote the linear operator defined by
\begin{equation}\label{e.opau}
T_{v,w}x:=\left(\frac{w(n)}{n}\sum_{k=1}^n\frac{x_k}{v(k)}\right)_{n\in\N},\quad x=(x_n)_{n\in\N}\in \C^\N.
\end{equation}
Observe that $\Phi_w	\sC=T_{v,w}\Phi_v$ as linear maps on $\C^\N$. Hence, the Ces\`aro operator $\sC=\Phi_w^{-1}T_{v,w}\Phi_v$  maps $\ell_1(v)$ continuously (resp., compactly) into $\ell_1(w)$ if and only if the restricted operator $T_{v,w}\in \cL(\ell_1)$ (resp., $T_{v,w}\in \cK(\ell_1)$). In this regard the following  result will be useful,
\cite[p.11]{KL}, \cite[Lemma 2]{R}, \cite[p.220]{T}.

\begin{lemma}\label{L_U} Let $A=(a_{mn})_{m,n\in\N}$ be a matrix with entries from $\C$ and $T\colon \C^\N\to\C^\N$ be the linear operator defined by
\[
Tx:=\left(\sum_{n=1}^\infty a_{mn}x_n\right)_{m\in\N},\quad x=(x_n)_{n\in\N},
\]
interpreted as $Tx\in \C^\N$  exists for  $x\in \C^\N$. Then $T\in \cL(\ell_1)$ if and only if

\[
\sup_{n\in\N}\sum_{m=1}^\infty |a_{mn}|<\infty.
\]
In this case, the operator  norm of $T$ is given by $\|T\|=\sup_{n\in\N}\sum_{m=1}^\infty |a_{mn}|$.
\end{lemma}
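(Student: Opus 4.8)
The plan is to characterize boundedness of the matrix operator $T$ on $\ell_1$ by testing $T$ against the canonical basis vectors $e_n$ and then using the Schur property / unconditional basis structure of $\ell_1$ to pass from the columns to arbitrary elements. Concretely, the action of $T$ on the basis vector $e_n=(\delta_{kn})_k$ is $Te_n=(a_{mn})_{m\in\N}$, which is simply the $n$-th column of the matrix $A$. Hence $\|Te_n\|_1=\sum_{m=1}^\infty|a_{mn}|$, and since $\|e_n\|_1=1$, if $T\in\cL(\ell_1)$ then immediately $\sum_{m=1}^\infty|a_{mn}|=\|Te_n\|_1\le\|T\|$ for every $n$, giving $\sup_n\sum_m|a_{mn}|\le\|T\|<\infty$. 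This is the easy direction and also yields one of the two inequalities needed for the norm formula.

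For the converse, set $M:=\sup_{n\in\N}\sum_{m=1}^\infty|a_{mn}|<\infty$ and take an arbitrary $x=(x_n)\in\ell_1$. The plan is to estimate $\|Tx\|_1=\sum_{m=1}^\infty\bigl|\sum_{n=1}^\infty a_{mn}x_n\bigr|$ directly. I would first apply the triangle inequality inside each $m$-sum to get $\sum_m\bigl|\sum_n a_{mn}x_n\bigr|\le\sum_m\sum_n|a_{mn}||x_n|$, then interchange the order of summation (justified by Tonelli's theorem, since all terms are nonnegative) to obtain $\sum_n|x_n|\sum_m|a_{mn}|\le M\sum_n|x_n|=M\|x\|_1$. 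This simultaneously shows $Tx\in\ell_1$ (in particular the defining series for each coordinate of $Tx$ converges absolutely), that $T$ maps $\ell_1$ into $\ell_1$, and that $\|T\|\le M$. Combined with the reverse inequality from the first paragraph, this yields $\|T\|=M$.

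The only genuine subtlety — and the point I would be most careful about — is the interchange of the double summation and the a priori well-definedness of $Tx$. The lemma hypothesis only posits that $Tx\in\C^\N$ \emph{exists} for $x\in\C^\N$, so for the converse I must verify that under the finiteness of $M$ the inner series $\sum_n a_{mn}x_n$ converges absolutely for each fixed $m$ when $x\in\ell_1$; this follows because $\sum_n|a_{mn}||x_n|\le\bigl(\sup_{k}\sum_{m'}|a_{m'k}|\bigr)$-type bounds are not immediately available row-wise, so I would instead note that for fixed $m$ one has $|a_{mn}|\le\sum_{m'=1}^\infty|a_{m'n}|\le M$, whence $\sum_n|a_{mn}||x_n|\le M\|x\|_1<\infty$. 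Once absolute convergence of each row is secured, Tonelli licenses the swap and the rest is the routine estimate above. I do not anticipate a real obstacle here; the result is classical and the argument is entirely elementary, the care being purely in ordering the convergence justifications correctly.
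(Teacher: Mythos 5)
Your proof is correct: the necessity direction by testing $T$ on the basis vectors $e_n$ (whose images are the columns of $A$), and the sufficiency direction via the row-wise absolute convergence bound $|a_{mn}|\le M$ followed by Tonelli's interchange, is precisely the classical argument. The paper itself offers no proof of this lemma, quoting it instead from the references [KL], [R, Lemma 2] and [T, p.~220], where essentially this same elementary argument appears; note only that your opening appeal to the Schur property and unconditional basis structure plays no role in what you actually do.
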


An immediate application is the following  result.

\begin{prop}\label{P1-C} Let $v=(v(n))_{n=1}^\infty$  and $w=(w(n))_{n=1}^\infty$ be two bounded, strictly positive sequences.
\begin{itemize}
\item[\rm (i)]  $\sC$  maps $\ell_1(v)$ continuously into $\ell_1(w)$ if and only
 if
\begin{equation}\label{e.Co-C-1}
M_{v,w}:=\sup_{n\in\N}\frac{1}{v(n)}\sum_{m=n}^\infty \frac{w(m)}{m}<\infty.
\end{equation}
In this case, $\|\sC\|=M_{v,w}$.
\item[\rm (ii)]  $\sC$  maps $\ell_1(v)$ compactly into $\ell_1(w)$ if and only if
\begin{equation}\label{e.Comp}
\lim_{n\to\infty}\frac{1}{v(n)}\sum_{m=n}^\infty \frac{w(m)}{m}=0.
\end{equation}
\end{itemize}
\end{prop}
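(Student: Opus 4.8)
The plan is to exploit the factorization $\Phi_w\sC=T_{v,w}\Phi_v$ recorded above, so that $\sC$ maps $\ell_1(v)$ continuously (resp.\ compactly) into $\ell_1(w)$ if and only if $T_{v,w}\in\cL(\ell_1)$ (resp.\ $T_{v,w}\in\cK(\ell_1)$), and then to analyse $T_{v,w}$ entirely through its matrix. Reading off \eqref{e.opau}, the operator $T_{v,w}$ has matrix $A=(a_{mk})$ with $a_{mk}=\frac{w(m)}{m\,v(k)}$ for $k\le m$ and $a_{mk}=0$ for $k>m$; since each row involves only the finite sum $\sum_{k=1}^m$, the vector $T_{v,w}x\in\C^\N$ is defined for every $x\in\C^\N$ and Lemma \ref{L_U} applies verbatim. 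For part (i) I would compute the $k$-th column sum $\sum_{m=1}^\infty|a_{mk}|=\frac{1}{v(k)}\sum_{m=k}^\infty\frac{w(m)}{m}$, which is precisely the summand defining $M_{v,w}$ in \eqref{e.Co-C-1} (after relabelling the index $n$ as $k$). Lemma \ref{L_U} then gives $T_{v,w}\in\cL(\ell_1)$ exactly when $M_{v,w}<\infty$, together with $\|T_{v,w}\|=M_{v,w}$; as $\Phi_v,\Phi_w$ are isometric isomorphisms, this yields $\|\sC\|=M_{v,w}$.

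For the sufficiency in part (ii), set $c_k:=\sum_{m=1}^\infty|a_{mk}|=\frac{1}{v(k)}\sum_{m=k}^\infty\frac{w(m)}{m}$, so that hypothesis \eqref{e.Comp} reads $c_k\to0$. Let $P_N$ be the coordinate projection of $\ell_1$ onto $\mathrm{span}\{e_1,\dots,e_N\}$. Because $T_{v,w}P_N x$ is a fixed linear combination of $T_{v,w}e_1,\dots,T_{v,w}e_N$, the operator $T_{v,w}P_N$ has rank at most $N$ and is thus compact. The difference $T_{v,w}(I-P_N)$ has matrix agreeing with $A$ in the columns $k>N$ and vanishing in the columns $k\le N$, so applying Lemma \ref{L_U} to this tail matrix gives $\|T_{v,w}(I-P_N)\|=\sup_{k>N}c_k\to0$ as $N\to\infty$. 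Hence $T_{v,w}$ is a norm-limit of finite rank operators and therefore $T_{v,w}\in\cK(\ell_1)$.

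The necessity in part (ii) is the delicate point, and is where I would be careful: one cannot invoke ``a compact operator maps weakly null sequences to norm null sequences'' applied to $(e_k)$, since in $\ell_1$ the vectors $e_k$ are \emph{not} weakly null (the Schur property would then force norm convergence, which fails). Instead I would argue by contradiction using the triangular support of the columns. If \eqref{e.Comp} fails, there are $\varepsilon>0$ and indices $k_1<k_2<\cdots$ with $\|T_{v,w}e_{k_j}\|_1=c_{k_j}\ge\varepsilon$. As $(e_{k_j})$ is bounded and $T_{v,w}$ is compact, some subsequence $(T_{v,w}e_{k_{j_i}})_i$ converges in norm to a vector $y\in\ell_1$. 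But $T_{v,w}e_k$ is supported on the coordinates $m\ge k$, so for each fixed $m$ the $m$-th coordinate of $T_{v,w}e_{k_{j_i}}$ vanishes once $k_{j_i}>m$; the coordinatewise limit is $0$, whence $y=0$. This contradicts $\|y\|_1=\lim_i c_{k_{j_i}}\ge\varepsilon$, establishing \eqref{e.Comp}. The only genuine subtlety is precisely this support-escape mechanism replacing the unavailable weak-nullity argument; everything else is a direct reading of Lemma \ref{L_U}.
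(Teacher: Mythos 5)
Your proof is correct; part (i) coincides with the paper's own argument (the factorization through $\Phi_v,\Phi_w$ plus Lemma \ref{L_U} applied to the column sums), but in part (ii) you take a genuinely different, and more elementary, route. The paper argues by duality: it introduces the operator $A\colon c_0(w^{-1})\to c_0(v^{-1})$, $Ay=\left(\sum_{m=n}^\infty\frac{y_m}{m}\right)_{n\in\N}$, checks that $A'=\sC$, and then combines Schauder's theorem with the bidual characterization of weak compactness \cite[Theorems 3.4.15, 3.5.8, 3.5.13]{ME} and the coincidence of compact and weakly compact sets in $\ell_1(w)$; the necessity of \eqref{e.Comp} drops out by applying $\sC'=A''$ to the single vector $w\in\ell_\infty(w^{-1})$. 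You instead stay entirely inside $\ell_1$: sufficiency by writing $T_{v,w}$ as a norm limit of the finite-rank operators $T_{v,w}P_N$, with the error norm $\sup_{k>N}c_k$ computed exactly by Lemma \ref{L_U} applied to the tail matrix; necessity by the support-escape argument, using that the column $T_{v,w}e_k$ is supported on coordinates $m\geq k$, so that any norm-convergent subsequence of columns must converge to $0$. Your caution about weak nullity is well placed and, in fact, essential: for a general compact matrix operator on $\ell_1$ the column norms need \emph{not} vanish (e.g. the rank-one map $x\mapsto\left(\sum_k x_k\right)e_1$ has all columns equal to $e_1$), so it is precisely the triangular support structure that saves the argument. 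What your approach buys is self-containedness: no duality, no weak compactness, no Schur property, and en route a clean general criterion for compactness of such triangular matrices on $\ell_1$. What the paper's approach buys is the explicit realization of $\sC$ as the dual of an operator between weighted $c_0$ spaces, which it reuses later (cf. \eqref{e:dualeop} in Section 3).
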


\begin{proof} (i) By the remark prior to Lemma \ref{L_U} we only need to show  that the operator $T_{v,w}\in \cL(\ell_1)$ if and only if \eqref{e.Co-C-1} is satisfied.

Now, $T_{v,w}=\Phi_w\sC\Phi_v^{-1}$ is defined via the matrix $A:=\left(a_{mn}\right)_{m,n\in\N}$ where, for each $m\in\N$, $a_{mn}:=\frac{w(m)}{mv(n)}$ for $1\leq n\leq m$ and $a_{mn}:=0$ otherwise.
 According to Lemma \ref{L_U},  $T_{v,w}\in \cL(\ell_1)$ if and only if
\[
\sup_{n\in\N}\frac{1}{v(n)}\sum_{m=n}^\infty \frac{w(m)}{m}<\infty,
\]
i.e., if and only if  \eqref{e.Co-C-1} is satisfied, in which case $\|T_{v,w}\|=M_{v,w}<\infty$.

So, assume now that $M_{v,w}<\infty$, in which case $\|T_{v,w}\|=M_{v,w}$. Then the identity $\sC=\Phi_w^{-1}T_{v,w}\Phi_v$ together with the fact that both $\Phi_v$ and $\Phi_w^{-1}$ are isometric isomorphisms, implies that $\|\sC\|=M_{v,w}$.

(ii)
Assume first that $\sC\in \cK(\ell_1(v),\ell_1(w))$. In particular,
$\sC$ is also continuous and so  \eqref{e.Co-C-1} is satisfied with $M_{v,w}<\infty$. The claim is that the operator $A\colon c_0(w^{-1})\to c_0(v^{-1})$ defined by $Ay:=\left(\sum_{m=n}^\infty\frac{y_m}{m}\right)_{n\in\N}$, for $y\in c_0(w^{-1})$,  is then continuous and its dual operator $A'$ is precisely $\sC\colon \ell_1(v)\to \ell_1(w)$. To establish continuity, fix $y\in c_0(w^{-1})$. Let $\ve>0$. Select $n_0\in\N$ such that $|y_n|w(n)^{-1}<\ve/M_{v,w}$ for all $n\geq n_0$. It follows, for every $n\geq n_0$, that
\[
\frac{1}{v(n)}\left|\sum_{m=n}^\infty\frac{y_m}{m}\right|\leq \frac{1}{v(n)}\sum_{m=n}^\infty\frac{|y_m|}{w(m)}\frac{w(m)}{m}<\frac{\ve}{M_{v,w}}\frac{1}{v(n)}\sum_{m=n}^\infty\frac{w(m)}{m}\leq \ve.
\]
Accordingly, $Ay\in c_0(v^{-1})$. Moreover, for each  $n\in\N$ we have  that
\[
\frac{1}{v(n)}\left|\sum_{m=n}^\infty\frac{y_m}{m}\right|\leq \frac{1}{v(n)}\sum_{m=n}^\infty\frac{|y_m|}{w(m)}\frac{w(m)}{m}\leq \|y\|_{0, w^{-1}}\frac{1}{v(n)}\sum_{m=n}^\infty\frac{w(m)}{m}\leq M_{v, w}\|y\|_{0, w^{-1}},
\]
which yields
$\|Ay\|_{0, v^{-1}}\leq  M_{v, w}\|y\|_{0, w^{-1}}$.
But, $y\in c_0(w^{-1})$ is arbitrary, and so   $A$ is continuous with $\|A\|\leq M_{v, w}$.
It is routine to check that $A'=\sC$.

Since $\sC\in \cK(\ell_1(v),\ell_1(w))$ and $\sC$ is the dual operator of $A$, Schauder's theorem implies  that $A\in \cK(c_0(w^{-1}),c_0(v^{-1}))$, \cite[Theorem 3.4.15]{ME},
\cite[p.282]{Y}. In particular,  $A\in\cL(c_0(w^{-1}),c_0(v^{-1}))$ is necessarily  weakly compact.
Hence,
its bidual operator $A''=\sC'\in \cL(\ell_\infty(w^{-1}),\ell_\infty(v^{-1}))$ actually maps $\ell_\infty(w^{-1})$ into $c_0(v^{-1})$, \cite[Theorem 3.5.8]{ME}. But  $w\in \ell_\infty(w^{-1})$ and so $\sC'w\in c_0(v^{-1})$, that is, $\lim_{n\to\infty}\frac{(\sC'w)(n)}{v(n)}=0$. Since $\sC'w=\left(\sum_{m=n}^\infty\frac{w(m)}{m}\right)_{n\in\N}$,  we obtain that
$\lim_{n\to\infty}\frac{1}{v(n)}\sum_{m=n}^\infty \frac{w(m)}{m}=0$,
that is, \eqref{e.Comp} is satisfied.

Conversely, suppose that \eqref{e.Comp} holds.  Then also \eqref{e.Co-C-1} is valid and  so $\sC\in\cL(\ell_1(v),\ell_1(w))$ by part (i) of this Proposition. Consequently, $\sC'\in \cL((\ell_\infty(w^{-1}),\ell_\infty(v^{-1}))$. Observe, for every $x\in \ell_\infty(w^{-1})$, that
\[
\frac{1}{v(n)}\left|\sum_{m=n}^\infty \frac{x_m}{m}\right|\leq \frac{1}{v(n)}\sum_{m=n}^\infty \frac{|x_m|}{w(m)}\frac{w(m)}{m}\leq \|x\|_{\infty,w^{-1}}\frac{1}{v(n)}\sum_{m=n}^\infty \frac{w(m)}{m},\quad n\in\N.
\]
Hence, by \eqref{e.Comp} it follows that $\lim_{n\to\infty}\frac{1}{v(n)}\left|\sum_{m=n}^\infty \frac{x_m}{m}\right|=0$, that is, $\sC'x\in c_0(v^{-1})$. Accordingly, $\sC'$ actually maps $\ell_\infty(w^{-1})$ into $c_0(v^{-1})$. That is, the restriction $A:=\sC'|_{c_0(w^{-1})}$,  which is continuous from $c_0(w^{-1})\su \ell_\infty(w^{-1})$ into $c_0(v^{-1})\su \ell_\infty(v^{-1})$, has the property that $A''=\sC'$ is continuous from $\ell_\infty(w^{-1})$ into $\ell_\infty(v^{-1})$ and maps $\ell_\infty(w^{-1})$ into $c_0(v^{-1})$. Accordingly, $A$ is weakly compact, \cite[Theorem 3.5.8]{ME}, and hence, also $\sC=A'$ is weakly compact from $\ell_1(v)$ into $\ell_1(w)$, \cite[Theorem 3.5.13]{ME}. Since the compact and weakly compact subsets of $\ell_1(w)\simeq \ell_1$ coincide, \cite[p.255]{ME}, it follows that $\sC$ maps $\ell_1(v)$ compactly into $\ell_1(w)$.
\end{proof}

If $v=w$ we denote $M_{v,w}$ simply by $M_w$. In the event that $M_w<\infty$, the corresponding (continuous) Ces\`aro operator $\sC\colon \ell_1(w)\to\ell_1(w)$ is denoted by $\sC^{(1,w)}$. As indicated in Section 1, we also write $\sC^{(1,w)}\in \cL(\ell_1(w))$.

The following simple fact will be used on several occasions.

\begin{lemma}\label{L-stiam2} Let $\delta>0$. Then
\[
\sum_{n=m}^\infty\frac{1}{n^{1+\delta}}\leq \frac{1}{\delta (m-1)^\delta}\leq\frac{2^\delta}{\delta  m^\delta}, \quad m\geq 2.
\]
\end{lemma}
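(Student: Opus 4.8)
The plan is to prove the two inequalities in Lemma \ref{L-stiam2} by the standard integral-comparison technique, which is the natural tool for estimating a tail sum of a decreasing positive sequence. First I would note that the function $t\mapsto t^{-(1+\delta)}$ is positive and decreasing on $[1,\infty)$ for any $\delta>0$, so each term $n^{-(1+\delta)}$ is bounded above by the integral of this function over the preceding unit interval, namely $n^{-(1+\delta)}\le \int_{n-1}^{n}t^{-(1+\delta)}\,dt$. Summing this over $n\ge m$ telescopes the integral and yields
\[
\sum_{n=m}^\infty \frac{1}{n^{1+\delta}}\le \int_{m-1}^\infty t^{-(1+\delta)}\,dt=\frac{1}{\delta\,(m-1)^\delta},
\]
which is exactly the first claimed inequality. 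This step is completely routine; the only mild care needed is to observe that the improper integral converges precisely because $\delta>0$, and that the hypothesis $m\ge 2$ guarantees $m-1\ge 1$ so the lower endpoint stays in the region where the integrand is well-behaved and the bound is finite.

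For the second inequality I would simply compare $(m-1)^\delta$ with $m^\delta$. Since $m\ge 2$ gives $m-1\ge m/2$, raising to the positive power $\delta$ preserves the inequality, so $(m-1)^\delta\ge (m/2)^\delta=m^\delta/2^\delta$, and taking reciprocals yields $\frac{1}{\delta(m-1)^\delta}\le \frac{2^\delta}{\delta m^\delta}$. Chaining this with the first estimate completes the proof. I do not anticipate any genuine obstacle here: both inequalities are elementary, and the result is stated merely as a convenient lemma to be invoked repeatedly later. The only point requiring a moment's attention is the role of the constraint $m\ge 2$, which is what makes the passage from $m-1$ to $m$ (via $m-1\ge m/2$) valid and prevents division by zero in the intermediate bound.

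Alternatively, if one preferred to avoid integrals entirely, the first inequality could be obtained by the telescoping algebraic bound $\frac{1}{n^{1+\delta}}\le \frac{1}{\delta}\left(\frac{1}{(n-1)^\delta}-\frac{1}{n^\delta}\right)$, valid for $n\ge 2$ by the mean value theorem applied to $t\mapsto t^{-\delta}$; summing this from $n=m$ telescopes to $\frac{1}{\delta(m-1)^\delta}$. Either route is short, and I would present whichever keeps the exposition cleanest, most likely the integral comparison since it is the most transparent.
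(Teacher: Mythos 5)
Your proof is correct and follows essentially the same route as the paper: the paper also bounds the tail sum by $\int_{m-1}^\infty x^{-(1+\delta)}\,dx = \frac{1}{\delta(m-1)^\delta}$ via monotonicity of $x\mapsto x^{-(1+\delta)}$, and then uses $m\leq 2(m-1)$ for $m\geq 2$ to pass to $\frac{2^\delta}{\delta m^\delta}$. Your term-by-term formulation of the integral comparison and the explicit justification $m-1\geq m/2$ are just slightly more detailed renderings of the paper's one-line argument.
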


\begin{proof} Fix $m\geq 2$. Then
\[
\sum_{n=m}^\infty\frac{1}{n^{1+\delta}}\leq \int_{m-1}^\infty\frac{1}{x^{1+\delta}}\,dx =\frac{1}{\delta (m-1)^\delta}\leq\frac{2^\delta}{\delta  m^\delta}.
\]
\end{proof}

\begin{remark}\label{R.VW}\rm Let $w=(w(n))_{n\in\N}$ be a bounded, strictly positive  weight such that $\sC^{(1,w)}\in \cL(\ell_1(w))$.

(i) Necessarily  $\alpha:=\inf_{n\in\N}w(n)=0$. Otherwise, for  $n\in\N$, we have
\[
\frac{\alpha}{w(1)}\sum_{m=1}^n\frac{1}{m}\leq \frac{1}{w(1)}\sum_{m=1}^n\frac{w(m)}{m}\leq \frac{1}{w(1)}\sum_{m=1}^\infty\frac{w(m)}{m}\leq M_w<\infty,
\]
which is impossible.

(ii)  The  condition  $\alpha:=\inf_{n\in\N}w(n)=0$, necessary  for the  continuity of $\sC$  in $\ell_1(w)$, cannot be replaced with $w\in c_0$. To see this, define $w$ by $w(n):=1$ if $n=2^k$, for $k\in\N$, and $w(n):=\frac{1}{n}$ otherwise. Surely $w\not\in c_0$. Set $a_n:=\frac{1}{w(n)}\sum_{m=n}^\infty \frac{w(m)}{m}$ for $n\in\N$. If $n=2^k$ for some  $k\in\N$, then
\[
a_{2^k}=\sum_{m=2^k}^\infty \frac{w(m)}{m}\leq \sum_{m=1}^\infty\frac{1}{m^2}+\sum_{j=k}^\infty\frac{1}{2^j}\leq \frac{\pi^2}{6}+1.
\]
Clearly, $a_1=\sum_{m=1}^\infty \frac{w(m)}{m}\leq \sum_{m=1}^\infty\frac{1}{m^2}+\sum_{j=1}^\infty\frac{1}{2^j}\leq \frac{\pi^2}{6}+1$. Finally, for fixed $k\in\N$, if $2^k<n<2^{k+1}$, then  Lemma \ref{L-stiam2} implies  that
\[
a_n\leq n\left(\sum_{m=n}^\infty\frac{1}{m^2}+\sum_{j=k+1}^\infty\frac{1}{2^j} \right)\leq n\left(\frac{2}{n}+\frac{1}{2^k}\right)\leq 4.
\]
So, $\sup_{n\in\N}a_n<\infty$, i.e.,  $\sC^{(1,w)}\in \cL(\ell_1(w))$; see Proposition \ref{P1-C}(i).

(iii) Observe that $\|e_1\|_{1,w}=w(1)$ and $\|\sC^{(1,w)}e_1\|_{1,w}=\sum_{m=1}^\infty\frac{w(m)}{m}$. So,
\[
\|\sC^{(1,w)}\|\geq \frac{\|\sC^{(1,w)}e_1\|_{1,w}}{\|e_1\|_{1,w}}=\frac{1}{w(1)}\sum_{m=1}^\infty\frac{w(m)}{m}=1+\frac{1}{w(1)}\sum_{m=2}^\infty\frac{w(m)}{m}>1.
\]
\end{remark}

Fix  $1<p<\infty$. For \textit{every} strictly  positive, \textit{decreasing} sequence  $w=(w(n))_{n\in\N}$ (i.e., $w(n+1)\leq w(n)$ for $n\in\N$)  the corresponding Ces\`aro operator $\sC^{(p,w)}$ maps $\ell_p(w)$ continuously into itself and
\begin{equation}\label{e.norm_p}
\|\sC^{(p,w)}\|\leq p',
\end{equation}
where the constant $p'=\frac{p}{p-1}$ is \textit{independent of $w$}, \cite[Proposition 2.2]{ABR}.  Example \ref{esempi-C}(ii) below shows that this is surely not the case for $p=1$. Here,
\[
\ell_p(w):=\left\{x\in \C^\N\colon \|x\|_{p,w}:=\left(\sum_{n=1}^\infty |x_n|^pw(n)\right)^{1/p}<\infty \right\}
\]
which is a Banach space for the norm $\|\cdot\|_{p,w}$ (even if $w$ is not necessarily decreasing).  Remark \ref{R.VW}(i) indicates we only need to consider decreasing weights $w\in c_0$.

\begin{examples}\label{esempi-C}\rm
(i) Fix $\gamma\in (0,1]$. Define  $w$ by $w(1):=2$ and $w(n):=\frac{1}{(\log n)^\gamma}$ for $n\geq 2$. Then  $w\downarrow 0$. Moreover, $\sC e_1=\left(\frac{1}{n}\right)_{n\in\N}$ with
\begin{equation}\label{e.serie}
\|\sC e_1\|_{1,w}=2+\sum_{n=2}^\infty\frac{1}{n(\log n)^\gamma}.
\end{equation}
By the integral  test the series \eqref{e.serie} is \textit{divergent} and so $\sC e_1\not\in\ell_1(w)$. Hence, $\sC$ does not act in $\ell_1(w)$.

(ii) For $\alpha>0$ define the decreasing weight  $w_\alpha(n):=\frac{1}{n^\alpha}$ for $n\in\N$.
Then Lemma \ref{L-stiam2} implies that
\begin{eqnarray*}
\frac{1}{w_\alpha(n)}\sum_{m=n}^\infty\frac{w_\alpha(m)}{m} &=& n^\alpha\sum_{m=n}^\infty\frac{1}{m^{\alpha+1}}\\
&\leq & \frac{n^\alpha}{\alpha(n-1)^\alpha}=\frac{1}{\alpha}\left(\frac{n}{n-1}\right)^\alpha\leq \frac{2^\alpha}{\alpha}, \quad  n\geq 2.
\end{eqnarray*}
Hence, $M_{w_\alpha}=\sup_{n\in\N}\frac{1}{w_\alpha(n)}\sum_{m=n}^\infty\frac{w_\alpha(m)}{m}\leq \frac{2^\alpha}{\alpha}$. Via Proposition \ref{P1-C}(i) we have   $\sC^{(1,w_\alpha)}\in \cL(\ell_1(w_\alpha))$. Observe that $w_\alpha\in \ell_1$ if and only if $\alpha>1$.

On the other hand, for each fixed $n\in\N$
we have
\[
\frac{1}{w_\alpha(n)}\sum_{m=n}^\infty\frac{w_\alpha(m)}{m}=n^\alpha\sum_{m=n}^\infty\frac{1}{m^{\alpha+1}}\geq n^\alpha\int_n^\infty\frac{1}{s^{\alpha+1}}\,ds=\frac{1}{\alpha}.
\]
Accordingly,
\begin{equation}\label{e.normaalfa}
\|\sC^{(1,w_\alpha)}\|=M_{w_\alpha}\geq \frac{1}{\alpha},\quad \forall \alpha>0.
\end{equation}
That is, there is no constant $K>0$ such that  $\|\sC^{(1,w)}\|\leq K$ for all decreasing weights $w\downarrow 0$ satisfying $\sC^{(1,w)}\in \cL(\ell_1(w))$.

(iii) Let now $\gamma>1$. Define  $w$ by $w(n):=\frac{1}{(\log (n+1))^\gamma}$ for $n\in\N$. Unlike in (i) above,  the integral   test reveals that now  $\sum_{n=1}^\infty\frac{1}{n(\log (n+1))^\gamma}$ is convergent. Nevertheless, $\sC$ is still not continuous from $\ell_1(w)$ into itself. To see this,  let  $g(x):=x(\log (x+1))^\gamma$, for $x>0$. Then $g$ is a strictly  increasing, positive, differentiable function in $(0,\infty)$ with $g'(x)=(\log (x+1))^\gamma+\gamma\frac{x}{x+1}(\log (x+1))^{\gamma-1}>0$ for all $x>0$. Accordingly,  $f(x):=\frac{1}{g(x)}$ is  strictly decreasing, positive, and continuous  in $(0,\infty)$. So, for fixed $n\in\N$, we have
\begin{eqnarray*}
\sum_{m=n}^\infty\frac{1}{m(\log (m+1))^\gamma}&\geq &\int_n^\infty \frac{1}{x(\log (x+1))^\gamma}\,dx \geq \int_n^\infty \frac{1}{(x+1)(\log (x+1))^\gamma}\,dx\\
&=&\frac{1}{(\gamma-1)(\log (n+1))^{\gamma-1}}.
\end{eqnarray*}
It follows that
\[
\frac{1}{w(n)}\sum_{m=n}^\infty\frac{w(m)}{m}\geq (\log(n+1))^\gamma\frac{1}{(\gamma-1)(\log (n+1))^{\gamma-1}}=\frac{\log(n+1)}{\gamma-1}.
\]
Accordingly,
$\sup_{n\in\N}\frac{1}{w(n)}\sum_{m=n}^\infty\frac{w(m)}{m}=\infty$
and so  $\sC$ fails to  map $\ell_1(w)$  into itself; see  Proposition \ref{P1-C}(i).
\end{examples}

Examples \ref{esempi-C}(i), (iii) show if $w\downarrow 0$ ``too slowly'', then $\sC$ may fail to act in $\ell_1(w)$. On the other hand, Example \ref{esempi-C}(ii)  indicates if $w\downarrow 0$ ``somewhat faster'' than in Examples \ref{esempi-C}(i), (iii) (note there that even $w_\alpha\in \ell_1$ for all $\alpha>1$), then $\sC^{(1,w)}$ may be continuous in $\ell_1(w)$. Unfortunately, no rate of decay for  $w\downarrow 0$ can be specified apriori to ensure that $\sC$ acts  in $\ell_1(w)$.

Given two bounded, strictly positive sequences $v$, $w$ satisfying $v(n)\leq w(n)$ for all $n\in\N$ we simply write $v\leq w$.

\begin{prop}\label{P.W_N} Let $v$ be any bounded, strictly positive sequence satisfying $\inf_{n\in\N}v(n)=0$.
\begin{itemize}
\item[\rm (i)] There exists a decreasing, strictly positive sequence $w\leq v$ such that $w\in c_0$ and $\sC$ does not act in $\ell_1(w)$.
\item[\rm (ii)] There exists a decreasing, strictly positive sequence $u\leq v$ such that  $\sC^{(1,u)}$is a compact operator in $\ell_1(u)$.
\end{itemize}
\end{prop}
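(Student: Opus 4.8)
The plan is to reduce both statements to the quantitative criteria of Proposition \ref{P1-C} (taken with $v=w=u$): for (i) I want a decreasing $w\le v$ for which the supremum in \eqref{e.Co-C-1} is infinite, and for (ii) a decreasing $u\le v$ for which the limit in \eqref{e.Comp} is $0$. The single device I would use in both parts is the \emph{decreasing minorant} $\tilde w(n):=\min_{1\le m\le n}v(m)$, which is non-increasing, strictly positive (a minimum of finitely many positive numbers), satisfies $\tilde w\le v$, and decreases to $\inf_n v(n)=0$. That last limit is where the hypothesis enters: strict positivity together with $\inf v=0$ forces $\{n:v(n)<\varepsilon\}$ to be infinite for every $\varepsilon>0$, so $\tilde w\to 0$.

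For (i) the obstacle is that the pointwise domination $w\le v$ appears to rule out the long \emph{flat} stretches of $w$ that are needed to blow up \eqref{e.Co-C-1}; the trick I would use is to place each flat stretch at the \emph{level of its right endpoint}. Concretely I would choose contiguous blocks $[a_k,b_k]$ with $a_1=1$, $a_{k+1}=b_k+1$ and $b_k:=2^k a_k$, and set $w(m):=\tilde w(b_k)$ for $a_k\le m\le b_k$. Then $w$ is non-increasing (constant on each block, and $\tilde w(b_k)$ decreases across blocks), strictly positive, and tends to $0$, so $w\in c_0$; and for $a_k\le m\le b_k$ monotonicity of $\tilde w$ gives $w(m)=\tilde w(b_k)\le\tilde w(m)\le v(m)$, so $w\le v$. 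Evaluating \eqref{e.Co-C-1} at $n=a_k$ then yields
\[
\frac{1}{w(a_k)}\sum_{m=a_k}^\infty\frac{w(m)}{m}\ge\frac{1}{\tilde w(b_k)}\sum_{m=a_k}^{b_k}\frac{\tilde w(b_k)}{m}=\sum_{m=a_k}^{b_k}\frac1m\ge\log\frac{b_k}{a_k}=k\log 2,
\]
which is unbounded in $k$; hence \eqref{e.Co-C-1} fails and Proposition \ref{P1-C}(i) shows $\sC$ does not act in $\ell_1(w)$.

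For (ii) I would instead force geometric decay, which is more than enough for \eqref{e.Comp}. Define $u(1):=v(1)$ and $u(n):=\min\{v(n),\tfrac12 u(n-1)\}$ for $n\ge2$. By construction $u$ is strictly positive, strictly decreasing, and $u\le v$, and it satisfies $u(m)\le 2^{-(m-n)}u(n)$ for $m\ge n$ (in particular $u\in c_0$). Consequently
\[
\frac{1}{u(n)}\sum_{m=n}^\infty\frac{u(m)}{m}\le\sum_{m=n}^\infty\frac{2^{-(m-n)}}{m}\le\frac1n\sum_{j=0}^\infty 2^{-j}=\frac2n\longrightarrow0,
\]
so \eqref{e.Comp} holds and Proposition \ref{P1-C}(ii) gives $\sC^{(1,u)}\in\cK(\ell_1(u))$. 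Note this recursion never uses $\inf v=0$, so the hypothesis is genuinely needed only in (i). I expect the only real difficulty to lie in (i)—reconciling monotonicity and the constraint $w\le v$ with the slow decay that defeats continuity—and the right-endpoint minorant construction is precisely what overcomes it.
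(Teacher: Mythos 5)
Your proposal is correct and takes essentially the same route as the paper: in (i) both use the running-minimum minorant of $v$ together with a block construction that pins the weight on each block to the minorant's value at the block's \emph{right} endpoint, so that the harmonic sums over blocks diverge and Proposition \ref{P1-C}(i) applies (the paper picks block lengths adaptively, you take geometric blocks $b_k=2^ka_k$ --- a cosmetic difference); in (ii) both use a recursive minimum forcing at-least-geometric decay, the paper with factor $\frac{1}{n+1}$ and an appeal to Proposition \ref{P2-C}, you with factor $\frac12$ and a direct verification of \eqref{e.Comp}, which is the same computation that underlies Proposition \ref{P2-C}. No gaps.
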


\begin{proof} (i) Define $\varphi(n):=\min\{v(k)\colon 1\leq k\leq n\}$ for $n\in\N$. Then $\varphi$ is
strictly positive, decreasing,   satisfies $\varphi\leq v$ and  $\varphi\in c_0$.

Since $\lim_{n\to\infty}\sum_{m=k}^n\frac{1}{m}=\infty$, for all $k\in\N$, there exists
 a strictly increasing sequence $(k_j)_{j\in\N}$ in $\N$ (with $k_1:=1$)  satisfying
\begin{equation}\label{eq.magg}
\sum_{m=k_j+1}^{k_{j+1}}\frac{1}{m}>j,\quad j\in\N.
\end{equation}
Define $w(1):=1$ and $w(n):=\varphi(k_{j+1})$ for $n\in\{k_j+1,\ldots, k_{j+1}\}$ and each $j\in\N$. Since $\varphi$ is decreasing, so is $w$. In addition, for $j\in\N$ and $k_j+1\leq n\leq k_{j+1}$ we have $w(n)=\varphi(k_{j+1})\leq \varphi(n)$, that is, $w\leq \varphi\leq v$  with $w\in c_0$.
For each $j\in\N$ we have
\[
\frac{1}{w(k_j+1)}\sum_{m=k_j+1}^\infty \frac{w(m)}{m}\geq \frac{1}{w(k_j+1)}\sum_{m=k_j+1}^{k_{j+1}} \frac{w(m)}{m} \ge\sum_{m=k_j+1}^{k_{j+1}}\frac{1}{m}>j
\]
and hence,    $\sup_{n\in\N}\frac{1}{w(k_j+1)}\sum_{m=k_j+1}^\infty\frac{w(m)}{m}=\infty$. Then Proposition \ref{P1-C}(i) shows that  $\sC$ does not act  in $\ell_1(w)$.

(ii) Set $u(1):=v(1)$. Inductively, for $n\in\N$ with $u(1),\ldots, u(n)$ already specified, define
\[
u(n+1):=\min\left\{v(n+1), \frac{u(n)}{n+1}\right\}.
\]
Then $u$ satisfies $0<u\leq v$ with $u$ decreasing and $u(n+1)\leq \frac{u(n)}{n+1}$ for all $n\in\N$. Accordingly, $\lim_{n\to\infty}\frac{u(n+1)}{u(n)}=0$ and hence, by Proposition \ref{P2-C} below, we have that $\sC^{(1,u)}\in \cK(\ell_1(u))$.
\end{proof}

\begin{remark}\label{R.W_N} \rm
(i) In the statement of Proposition \ref{P.W_N} no assumption is made on $v$ as to whether or not $\sC$ acts in $\ell_1(v)$. The  behaviour exhibited in Proposition \ref{P.W_N} in relation to $\sC$ acting in $\ell_1(w)$ or not acting in $\ell_1(w)$ (even when $w\downarrow 0$) has no counterpart in the spaces $\ell_p(w)$, $1<p<\infty$. Indeed, in these spaces, for \textit{every} decreasing sequence $w$ the Ces\`aro operator $\sC$ is automatically continuous; see the discussion prior to Examples \ref{esempi-C}. The difference is that for $\ell_1(w)$ the continuity condition \eqref{e.Co-C-1} need  not respect existing  monotonicity properties of $w$.

(ii) A sequence $x=(x_n)_{n\in\N}\in \C^\N$ is  \textit{rapidly decreasing} if $(n^kx_n)_{n\in\N}\in \ell_1$ for every $k\in\N$. The space of all such sequences is denoted by $s$.
Let $v\in s$ be arbitrary. Proposition \ref{P.W_N}(ii) implies that there always exists a strictly positive weight $u\leq v$ (hence, $u\in s$) with $\sC^{(1,u)}\in \cK(\ell_1(u))$. By applying Proposition \ref{P.W_N}(i) to $u$  it follows that there exists another strictly positive sequence $w\leq u$ (hence, also $w\in s$) such that $\sC$ does \textit{not} act in $\ell_1(w)$.

(iii) The inequality \eqref{e.normaalfa}, together with \eqref{e.Comp} when $v=w$, provides a class of weights $w_\alpha \downarrow 0$, for $\alpha>0$, such that $\sC^{(1,w_\alpha)}\in \cL(\ell_1(w_\alpha))$ but, $\sC^{(1,w_\alpha)}$ \textit{fails} to be compact. \end{remark}

We now  exhibit a large class of weights $w \downarrow 0$ for which $\sC^{(1,w)}$ \textit{is} compact.

\begin{lemma}\label{L.stima} Let $r\in (0,1)$. Then
\begin{equation}\label{eq.limite}
\lim_{n\to\infty}\frac{1}{r^n}\sum_{m=n}^\infty \frac{r^m}{m}=0.
\end{equation}
\end{lemma}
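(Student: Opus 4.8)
The plan is to make the prefactor $r^{-n}$ disappear by absorbing it into the tail sum via a shift of the summation index, after which the whole quantity collapses to a simple geometric estimate. Concretely, for each $n\in\N$ I would write $m=n+k$ with $k\geq 0$, so that
\[
\frac{1}{r^n}\sum_{m=n}^\infty \frac{r^m}{m}=\frac{1}{r^n}\sum_{k=0}^\infty\frac{r^{n+k}}{n+k}=\sum_{k=0}^\infty\frac{r^k}{n+k}.
\]
This reformulation is the only genuine idea in the argument: it turns the awkward ratio of two $n$-dependent quantities into a single series whose terms are manifestly controllable.

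The remaining step is a crude termwise bound. Since $\frac{1}{n+k}\leq\frac{1}{n}$ for every $k\geq 0$, and since $0<r<1$, I would estimate
\[
\sum_{k=0}^\infty\frac{r^k}{n+k}\leq \frac{1}{n}\sum_{k=0}^\infty r^k=\frac{1}{n(1-r)}.
\]
Letting $n\to\infty$ then forces the expression to tend to $0$, which is exactly \eqref{eq.limite}. (One should note that all terms are positive, so the interchange of limit and the manipulations above are unproblematic, and the series converges because it is dominated by a convergent geometric series.)

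I do not anticipate any real obstacle here: the whole statement reduces to the elementary inequality $\frac{1}{n+k}\leq\frac{1}{n}$ together with summation of a geometric series, and the convergence $\frac{1}{n(1-r)}\to 0$ is immediate. If anything, the only point worth stating carefully is that the index shift is legitimate (both sides are absolutely convergent for $0<r<1$), but this is routine. An alternative route would be to dominate the tail $\sum_{m=n}^\infty \frac{r^m}{m}$ by $\frac1n\sum_{m=n}^\infty r^m=\frac{r^n}{n(1-r)}$ directly, bounding $\frac1m\le\frac1n$ for $m\ge n$; this avoids the change of variable entirely and yields the same bound $\frac{1}{n(1-r)}$ for $r^{-n}$ times the tail, so I would likely present this even shorter version.
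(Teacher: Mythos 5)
Your proof is correct, and your preferred ``shorter version'' (bounding $\frac{1}{m}\le\frac{1}{n}$ for $m\ge n$ and summing the geometric series to get $\frac{1}{r^n}\sum_{m=n}^\infty\frac{r^m}{m}\le\frac{1}{n(1-r)}$) is precisely the paper's own proof; the index-shift variant is just a cosmetic repackaging of the same estimate.
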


\begin{proof}
Clearly \eqref{eq.limite} follows from the following inequalities
\[
\frac{1}{r^n}\sum_{m=n}^\infty\frac{r^m}{m}\leq \frac{1}{nr^n}\sum_{m=n}^\infty{r^m}=\frac{1}{nr^n}\frac{r^n}{(1-r)}=\frac{1}{n(1-r)},\quad n\in\N.
\]
\end{proof}

\begin{prop}\label{P2-C} Let $w$ be a bounded, strictly positive sequence such that  $\limsup_{n\to\infty}\frac{w(n+1)}{w(n)}=:l\in [0,1)$. Then $\sC^{(1,w)}\in \cK(\ell_1(w))$.
\end{prop}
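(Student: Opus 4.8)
The plan is to invoke the compactness criterion of Proposition~\ref{P1-C}(ii) in the case $v=w$, which reduces the assertion to verifying the single limit condition
\[
\lim_{n\to\infty}\frac{1}{w(n)}\sum_{m=n}^\infty \frac{w(m)}{m}=0.
\]
Everything then rests on converting the hypothesis on the $\limsup$ of consecutive ratios into a uniform geometric decay estimate for the tails of $w$.

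First I would fix any real number $r$ with $l<r<1$. By the definition of $\limsup$, there exists an index $N\in\N$ such that $\frac{w(n+1)}{w(n)}\leq r$ for all $n\geq N$. Writing $w(m)$ as a telescoping product of successive ratios,
\[
w(m)=w(n)\prod_{k=n}^{m-1}\frac{w(k+1)}{w(k)},\quad m>n,
\]
and noting that every factor is at most $r$ once $k\geq N$, I obtain the pointwise domination $w(m)\leq r^{m-n}w(n)$ for all $m\geq n\geq N$ (the case $m=n$ being trivial).

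Next I would substitute this estimate into the quotient. For every $n\geq N$,
\[
\frac{1}{w(n)}\sum_{m=n}^\infty \frac{w(m)}{m}\leq \sum_{m=n}^\infty \frac{r^{m-n}}{m}=\frac{1}{r^n}\sum_{m=n}^\infty \frac{r^m}{m},
\]
and the right-hand side tends to $0$ as $n\to\infty$ by Lemma~\ref{L.stima}. (Alternatively, bounding $\frac{1}{m}\leq\frac{1}{n}$ yields the explicit estimate $\frac{1}{n(1-r)}$, which makes the convergence manifest.) This establishes \eqref{e.Comp} with $v=w$, and hence $\sC^{(1,w)}\in\cK(\ell_1(w))$ by Proposition~\ref{P1-C}(ii).

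I do not expect a genuine obstacle in this argument; it is a clean geometric-tail estimate. The only point requiring care is that $\limsup_{n\to\infty}\frac{w(n+1)}{w(n)}<1$ supplies the uniform bound $w(m)\leq r^{m-n}w(n)$ only for indices beyond some threshold $N$, so the domination—and therefore the estimate above—holds only for $n\geq N$. This is harmless, since \eqref{e.Comp} is purely an asymptotic statement about $n\to\infty$ and is insensitive to the finitely many terms with $n<N$.
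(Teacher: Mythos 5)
Your proposal is correct and follows essentially the same route as the paper's proof: fix $r\in(l,1)$, use the $\limsup$ hypothesis to obtain the geometric tail bound $w(m)\leq r^{m-n}w(n)$ for large $n$, reduce to $\frac{1}{r^n}\sum_{m=n}^\infty\frac{r^m}{m}\to 0$ (which is exactly Lemma~\ref{L.stima}, whose internal proof is the bound $\frac{1}{m}\leq\frac{1}{n}$ that you also mention), and conclude via Proposition~\ref{P1-C}(ii). Your added remark that the threshold $N$ is harmless because \eqref{e.Comp} is asymptotic is a point the paper leaves implicit, but the argument is the same.
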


\begin{proof} Let $r$ satisfy $l<r<1$. Then there exists $n_0\in\N$ such that $\sup_{n\geq n_0}\frac{w(n+1)}{w(n)}<r$ and hence, $w(n+1)<rw(n)$ for all $n\geq n_0$. It follows,
for a fixed $n\geq n_0$,  that
  $w(m)<r^{m-n} w(n)$ for all $m\geq n$. So, for all $n\geq n_0$, we can conclude that
\[
\frac{1}{w(n)}\sum_{m=n}^\infty\frac{w(m)}{m}\leq \frac{1}{w(n)}\sum_{m=n}^\infty \frac{r^{m-n}w(n)}{m}=\frac{1}{r^n}\sum_{m=n}\frac{r^m}{m}.
\]
Then  Lemma \ref{L.stima} shows that \eqref{e.Comp} holds, i.e., $\sC^{(1,w)}\in \cK(\ell_1(w))$.
\end{proof}

\begin{examples}\label{E.COP}\rm (i) Let $w(n):=n^\beta r^n$, for $r\in (0,1)$ and $\beta\geq 0$ fixed and for all $n\in\N$. Then $\lim_{n\to\infty}\frac{w(n+1)}{w(n)}=r\in (0,1)$.

(ii) Let $w(n)=\frac{1}{n^n}$ for $n\in\N$. Then
\[
\lim_{n\to\infty}\frac{w(n+1)}{w(n)}=\lim_{n\to\infty}\frac{1}{n+1}\left(\frac{n}{n+1}\right)^n= 0.
\]

(iii) Fix  $a>0$.  Let $w(n)=\frac{a^n}{n!}$ for all $n\in\N$. Then
\[
\lim_{n\to\infty}\frac{w(n+1)}{w(n)}=\lim_{n\to\infty}\frac{a}{n+1}= 0.
\]

(iv) Let $w$ be the positive sequence defined by $w(1):=1$ and $w(n+1):=a_nw(n)$ for $n\in\N$, where $a_{2p}:=\frac{1}{2}$ and $a_{2p-1}:=\frac{1}{p}$ for $p\in\N$. Then, for fixed $p\in\N$, we have $\frac{w(2p+1)}{w(2p)}=a_{2p}=\frac{1}{2}$ and $\frac{w(2p)}{w(2p-1)}=a_{2p-1}=\frac{1}{p}$. Accordingly, $\limsup_{n\to\infty}\frac{w(n+1)}{w(n)}=\frac{1}{2}$.

According to Proposition \ref{P2-C}, each of the weights $w$ in (i)-(iv) has the property that $\sC^{(1,w)}\in \cK(\ell_1(w))$.

(v) Fix $0<\beta<1$ and set $w_\beta(n):=e^{-n^\beta}$ for $n\in\N$. Since
\[
\lim_{n\to\infty}\frac{w_\beta(n+1)}{w_\beta(n)}=\lim_{n\to\infty}e^{n^\beta-(n+1)^\beta}=\lim_{n\to\infty}e^{-\beta/n^{1-\beta}}=1,
\]
because $n^\beta-(n+1)^\beta=n^\beta(1-[1+\frac{\beta}{n}+o(\frac{1}{n})])\simeq -\beta/n^{1-\beta}$, we see that Proposition \ref{P2-C} is not applicable. However,
\[
\frac{1}{w_\beta(n)}\sum_{m=n}^\infty\frac{w_\beta(m)}{m}=e^{n^\beta}\sum_{m=n}^\infty\frac{e^{-m^\beta}}{m^\beta m^{1-\beta}}\leq \frac{e^{n^\beta}}{n^\beta}\int_{n-1}^\infty \frac{e^{-x^\beta}}{x^{1-\beta}}\,dx
\]
as $x\mapsto \frac{e^{-x^\beta}}{x^{1-\beta}}=\frac{1}{x^{1-\beta}e^{x^\beta}}$ is decreasing in $(0,\infty)$. Since $\frac{d}{dx}\left(-\frac{1}{\beta}e^{-x^\beta}\right)=\frac{e^{-x^\beta}}{x^{1-\beta}}$, it follows that $\int_{n-1}^\infty \frac{e^{-x^\beta}}{x^{1-\beta}}\,dx=\frac{1}{\beta}e^{-(n-1)^\beta}$ and hence, that
\[
\frac{1}{w_\beta(n)}\sum_{m=n}^\infty\frac{w_\beta(m)}{m}\leq \frac{1}{\beta n^\beta}e^{n^\beta-(n-1)^\beta}\simeq \frac{e^{\beta/n^{1-\beta}}}{\beta n^\beta}.
\]
But, $\lim_{n\to\infty}\frac{e^{\beta/n^{1-\beta}}}{\beta n^\beta}=0$ and so Proposition \ref{P1-C}(ii), with $v:=w_\beta$, implies that $\sC^{(1,w_\beta)}\in \cL(\ell_1(w_\beta))$ is compact.

If $\beta=1$, then $w_\beta(n)=e^{-n}$ for $n\in\N$ and so $\lim_{n\to\infty}\frac{w_\beta(n+1)}{w_\beta(n)}=\frac{1}{e}<1$. For $\beta >1$, observe from above that
$
\lim_{n\to\infty}\frac{w_\beta(n+1)}{w_\beta(n)}=\lim_{n\to\infty}e^{-\beta n^{\beta-1}}=0$.
So, for $\beta\geq 1$ the compactness of $\sC^{(1,w_\beta)}$ does follow from Proposition \ref{P2-C}.

(vi) Fix $\gamma >1$ and set $w_\gamma(n):= e^{-\log^\gamma (n)}$ for $n\in\N$. It is shown in \cite[Remark 2.10(ii)]{ABR-9} that $\lim_{n\to\infty}\frac{w_\gamma(n+1)}{w_\gamma(n)}=1$ and so Proposition \ref{P2-C} is not applicable. However,
\begin{eqnarray*}
A_n:=\frac{1}{w_\gamma(n)}\sum_{m=n}^\infty\frac{w_\gamma(m)}{m}&=&e^{\log^\gamma (n)}\sum_{m=n}^\infty\frac{e^{-\log^\gamma (m)}}{m}\\
&\leq & e^{\log^\gamma (n)}\int_{n-1}^\infty\frac{e^{-\log^\gamma (x)}}{x}\,dx,\quad n\geq 2,
\end{eqnarray*}
because $x\mapsto \frac{e^{-\log^\gamma (v)}}{x}=\frac{1}{x \log^\gamma (x)}$ is decreasing in $(1,\infty)$. Accordingly,
\begin{eqnarray*}
A_n &\leq &  e^{\log^\gamma (n)}\int_{n-1}^\infty\frac{e^{-\log^\gamma (x)}}{x}\,dx= e^{\log^\gamma (n)}\int_{n-1}^\infty\, \frac{-1}{\gamma \log^{\gamma -1}(x)}f'(x)\,dx\\
&\leq & \frac{e^{\log^\gamma (n)}}{\gamma\log^{\gamma-1}(n-1)}\int_{n-1}^\infty (-f'(x))\,dx=\frac{e^{\log^\gamma (n)-\log^\gamma(n-1)}}{\gamma\log^{\gamma-1}(n-1)},
\end{eqnarray*}
where $f(x)=e^{-\log^\gamma (x)}$ (i.e., $f'(x)=\frac{-\gamma \log^{\gamma-1}(x) e^{-\log^\gamma(x)}}{x}$). But, for $n\geq 2$,
\[
\log^\gamma(n)-\log^{\gamma-1}(n-1)=g'(\xi_n), \ {\rm for\ some }\ \xi_n\in ((n-1),n),
\]
where $g(t):=\log^\gamma(t) $ satisfies $g'(t)=\frac{\gamma \log^{\gamma-1}(t)}{t}\to 0$ as $t\to \infty$. Hence, $0\leq \log^\gamma(n)-\log^{\gamma-1}(n-1)\leq 1$ for all $n\geq M$ and some $M\in\N$ with $M\geq 2$. It follows that
\[
A_n\leq \frac{e}{\gamma \log^{\gamma-1}(n-1)}, \quad  n\geq M,
\]
from which we can conclude that $\lim_{n\to\infty}A_n=0$, i.e., $\sC^{(1,w_\gamma)}\in \cK(\ell_1(w_\gamma))$ for all $\gamma>1$; see Proposition \ref{P1-C}(ii).
\end{examples}

\begin{remark}\label{R.NN_14}\rm Examples \ref{E.COP}(v), (vi)  also follow from the following fact.

\textit{Let $w$ be a bounded, strictly positive sequence with the property that, for every $k\in\N$ there exists $n(k)\in\N$ such that the sequence $(n^kw(n))_{n=n(k)}^\infty$ is decreasing. Then $\sC^{(1,w)}\in \cK(\ell_1(w))$.}

To see this, set $a_n:=\frac{1}{w(n)}\sum_{m=n}^\infty \frac{w(m)}{m}$ for $n\in\N$. Fix $k\in\N$. Then
\[
a_n=n^k\sum_{m=n}^\infty \frac{m^kw(m)}{n^kw(n)}\cdot\frac{1}{m^{k+1}}\leq n^k\sum_{m=n}^\infty\frac{1}{m^{k+1}},\quad n\geq n(k),
\]
because $\frac{m^kw(m)}{n^kw(n)}\leq 1$ for all $m\geq n$. But, $\sum_{m=n}^\infty\frac{1}{m^{k+1}}\leq \frac{1}{k(n-1)^k}$ (see Lemma \ref{L-stiam2}) and so $a_n\leq \frac{n^k}{k(n-1)^k}$ for $n\geq n(k)$. Since $\sup_{n\geq m(k)}\frac{n^k}{(n-1)^k}\leq 2$ for some $m(k)\geq n(k)$ it follows, for each $k\in\N$, that there exists $m(k)\in\N$ such that $a_n\leq \frac{2}{k}$ for all $n\geq m(k)$. This condition implies that $\lim_{n\to\infty}a_n=0$ and hence, via Proposition \ref{P1-C}(ii), that $\sC^{(1,w)}\in \cK(\ell_1(w))$.
\end{remark}

Let $v,\ w$ be bounded, strictly positive sequences satisfying $v\leq Aw$ for some constant $A>0$. Then the natural inclusion $\ell_1(w)\su \ell_1(v)$ is continuous because
$\|x\|_{1,v}\leq A\|x\|_{1,w}$, for  $x\in \ell_1(w)$.
Suppose that $\sC^{(1,w)}\in \cL(\ell_1(w))$. Then $\sC\colon \ell_1(w)\to \ell_1(v)$ is also continuous with $\|\sC\|\leq A\|\sC^{(1,w)}\|$. According to Proposition \ref{P.W_N}, $\sC$ need not have an $\ell_1(v)$-valued continuous linear extension from $\ell_1(w)$ to $\ell_1(v)$. Similarly, if $\sC^{(1,w)}\in \cK(\ell_1(w))$ and $\sC^{(1,v)}\in \cL(\ell_1(v))$, then  $\sC^{(1,v)}$  need not be  compact. The following explicit examples illustrate these features.

\begin{example}\label{E.Comparison}\rm (i) Select a strictly increasing sequence $1=:k_1<k_2<\ldots $ in $\N$ satisfying $k_{j+1}>2k_j$ for each $j\in\N$ and $\lim_{j\to\infty}\sum_{m=1+k_j}^{k_{j+1}}\frac{1}{m}=\infty$ (eg., $k_j:=j^j$). Set $v(1):=1$ and, for each $j\in\N$, define $v(n):=\frac{1}{2^j(k_{j+1}-k_j)}$ for $k_j+1\leq n\leq k_{j+1}$. For fixed $j\in\N$ it follows that
\[
\frac{1}{v(k_j+1)}\sum_{m=k_j+1}^\infty\frac{v(m)}{m}\geq \frac{1}{v(k_j+1)}\sum_{m=k_j+1}^{k_{j+1}}\frac{v(m)}{m} \ge \sum_{m=k_j+1}^{k_{j+1}}\frac{1}{m}
\]
and hence, $\sup_{j\in\N}\frac{1}{v(k_j+1)}\sum_{m=k_j+1}^\infty\frac{v(m)}{m}=\infty$. Proposition \ref{P1-C}(i) implies that $\sC$ does \textit{not} act  in $\ell_1(v)$.

On the other hand, define $w(n):=\frac{2}{n}$ for $n\in\N$. Given $n\geq 2$, select $j\in\N$ such that $k_j+1\leq n\leq k_{j+1}$. Then
\[
\frac{v(n)}{w(n)}=\frac{n}{2\cdot 2^j(k_{j+1}-k_j)}\leq \frac{k_{j+1}}{2(k_{j+1}-k_j)}=\frac{1}{2(1-\frac{k_j}{k_{j+1}})}<1
\]
and so $v\leq w$. Moreover, $\sC^{(1,w)}\in \cL(\ell_1(w))$; see Examples \ref{esempi-C}(ii).

(ii) Define the decreasing sequence $v$ by $v(1)=v(2):=1$ and
\[
v(n):= \frac{1}{2^i2^{(i+1)2^{i+1}}}, \ \mbox{ for }\ 2^i+1\leq n\leq 2^{i+1}\ \mbox{ and }\ i\in\N,
\]
and the sequence $w:=(\frac{1}{n^{n+1}})_{n\in\N}$. Given $n\geq 3$ select $i\in\N$ such that $2^i+1\leq n\leq 2^{i+1}$. Then
\[
\frac{v(n)}{w(n)}=\frac{n^nn}{2^i2^{(i+1)2^{i+1}}}\leq \frac{(2^{i+1})^{2^{i+1}}2^{i+1}}{2^i2^{(i+1)2^{i+1}}}=2.
\]
Since $\frac{v(1)}{w(1)}=1$ and $\frac{v(2)}{w(2)}=8$, it follows that $v\leq 8w$. In particular, $v\in s$. According to Proposition \ref{P2-C} (as $\lim_{n\to\infty}\frac{w(n+1)}{w(n)}=0$)  $\sC^{(1,w)}$ is compact. On the other hand, $\sC^{(1,v)}$ is continuous (see Fact 2 in Example \ref{ex.nocompact} below) but \textit{not} compact (see Fact 3 in Example \ref{ex.nocompact} below).
\end{example}

We now present a positive comparison result where difficulties such as those observed in Example \ref{E.Comparison} do not arise.

\begin{prop}\label{P.Comparison} Let $v,\ w$ be bounded, strictly positive sequences such that $(\frac{v(n)}{w(n)})_{n=n_0}^\infty$ is a decreasing sequence for some  $n_0\in\N$.
\begin{itemize}
\item[\rm (i)] If $\sC^{(1,w)}\in \cL(\ell_1(w))$, then also $\sC^{(1,v)}\in \cL(\ell_1(v))$.
\item[\rm (ii)] If $\sC^{(1,w)}\in \cK(\ell_1(w))$, then also $\sC^{(1,v)}\in \cK(\ell_1(v))$.
\end{itemize}
\end{prop}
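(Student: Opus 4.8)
The plan is to reduce both assertions to the scalar criteria of Proposition \ref{P1-C} by establishing a single termwise comparison. Set
\[
a_n^{(v)}:=\frac{1}{v(n)}\sum_{m=n}^\infty\frac{v(m)}{m},\qquad a_n^{(w)}:=\frac{1}{w(n)}\sum_{m=n}^\infty\frac{w(m)}{m},\qquad n\in\N,
\]
so that (i) asks us to deduce $\sup_n a_n^{(v)}<\infty$ from $\sup_n a_n^{(w)}=M_w<\infty$, while (ii) asks us to deduce $a_n^{(v)}\to 0$ from $a_n^{(w)}\to 0$. The key observation is that the hypothesis ``$(v(n)/w(n))_{n\ge n_0}$ decreasing'' is exactly the statement that $v(m)w(n)\le v(n)w(m)$, equivalently $\frac{v(m)}{v(n)}\le\frac{w(m)}{w(n)}$, for all $m\ge n\ge n_0$. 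Inserting this into the series yields, for every $n\ge n_0$,
\[
a_n^{(v)}=\sum_{m=n}^\infty\frac{v(m)}{v(n)}\,\frac{1}{m}\le\sum_{m=n}^\infty\frac{w(m)}{w(n)}\,\frac{1}{m}=a_n^{(w)}.
\]
This inequality is the whole point; both parts follow from it.

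For (i), continuity of $\sC^{(1,w)}$ gives $M_w<\infty$, so the bound $a_n^{(v)}\le a_n^{(w)}\le M_w$ controls every index $n\ge n_0$. What remains is to verify that each of the finitely many numbers $a_1^{(v)},\dots,a_{n_0-1}^{(v)}$ is finite, i.e. that $\sum_{m=n}^\infty\frac{v(m)}{m}<\infty$ for $n<n_0$. This is immediate from $\sum_{m=n_0}^\infty\frac{v(m)}{m}\le\frac{v(n_0)}{w(n_0)}\sum_{m=n_0}^\infty\frac{w(m)}{m}<\infty$, again using $M_w<\infty$. Taking the maximum over the finitely many small indices and combining with the uniform bound $M_w$ gives $M_v=\sup_n a_n^{(v)}<\infty$, whence $\sC^{(1,v)}\in\cL(\ell_1(v))$ by Proposition \ref{P1-C}(i).

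For (ii), compactness of $\sC^{(1,w)}$ gives $a_n^{(w)}\to 0$ (Proposition \ref{P1-C}(ii)), and in particular $M_w<\infty$, so each $a_n^{(v)}$ is finite exactly as just argued. The two-sided estimate $0\le a_n^{(v)}\le a_n^{(w)}$ for $n\ge n_0$ then forces $a_n^{(v)}\to 0$ by the squeeze theorem, and Proposition \ref{P1-C}(ii) delivers $\sC^{(1,v)}\in\cK(\ell_1(v))$.

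There is no serious obstacle here; the only step demanding a little care is the bookkeeping for the finitely many indices $n<n_0$, where the comparison inequality is unavailable and one must instead appeal directly to convergence of the tail series $\sum_{m\ge n_0}v(m)/m$.
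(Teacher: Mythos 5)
Your proof is correct and follows essentially the same route as the paper: both hinge on the observation that the monotonicity of $(v(n)/w(n))_{n\geq n_0}$ gives the termwise bound $\frac{1}{v(n)}\sum_{m=n}^\infty \frac{v(m)}{m}\leq \frac{1}{w(n)}\sum_{m=n}^\infty\frac{w(m)}{m}$ for $n\geq n_0$, after which Proposition \ref{P1-C} (with the finitely many indices $n<n_0$ handled via finiteness of the tail $\sum_{m\geq n_0}v(m)/m$) yields both parts. Your squeeze argument in (ii) is just the paper's $\epsilon$-argument phrased differently.
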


\begin{proof} (i) Define $\alpha_n:=\frac{v(n)}{w(n)}$ for $n\in\N$ in which case $\alpha_{n}\geq \alpha_{n+1}$ for all $n\geq n_0$. Proposition \ref{P1-C}(i) implies that $M_w<\infty$; see \eqref{e.Co-C-1}. Moreover,
\begin{equation}\label{eq.comparison}
\frac{1}{v(n)}\sum_{m=n}^\infty \frac{v(m)}{m}=\frac{1}{w(n)}\sum_{m=n}^\infty \frac{\alpha_m}{\alpha_n}\cdot\frac{w(m)}{m}\leq \frac{1}{w(n)}\sum_{m=n}^\infty\frac{w(m)}{m}\leq M_w,
\end{equation}
for all $n\geq n_0$. In particular, via \eqref{eq.comparison},
\[
A:=\sum_{k=1}^\infty \frac{v(k)}{k}=\sum_{k=1}^{n_0-1} \frac{v(k)}{k}+v(n_0)\cdot\frac{1}{v(n_0)}\sum_{k=n_0}^\infty \frac{v(k)}{k}<\infty.
\]
For each $n\in\{1,\ldots,n_0-1\}$ it follows that $\frac{1}{v(n)}\sum_{m=n}^\infty \frac{v(m)}{m}\leq \frac{A}{v(n)}$ and hence,
\[
M_v=\sup_{n\in\N}\frac{1}{v(n)}\sum_{m=n}^\infty \frac{v(m)}{m}\leq \max\left\{M_w,\max\left\{\frac{A}{v(1)},\ldots,\frac{A}{v(n_0-1)}\right\}\right\}<\infty.
\]
Accordingly, $\sC^{(1,v)}\in \cL(\ell_1(v))$.

(ii) Let $n_0$ be as in the statement of the proposition. Let $\epsilon>0$. Since $\sC^{(1,w)}$ is compact, there exists $n_1(\epsilon)>n_0$ such that
\[
\frac{1}{w(n)}\sum_{m=n}^\infty\frac{w(m)}{m}<\epsilon,\quad n\geq n_1(\epsilon);
\]
see \eqref{e.Comp}. It then follows from \eqref{eq.comparison} that also
\[
\frac{1}{v(n)}\sum_{m=n}^\infty \frac{v(m)}{m}<\epsilon,\quad n\geq n_1(\epsilon)>n_0.
\]
Accordingly, $\sC^{(1,v)}$ is also compact; see Proposition \ref{P1-C}(ii).
\end{proof}

\begin{example}\label{E.ConNOComp}\rm (i) For $w:=(\frac{1}{n^\alpha})_{n\in\N}$ with $\alpha>0$, Examples \ref{esempi-C}(ii) shows that $\sC^{(1,w)}\in \cL(\ell_1(w))$. Define $v(n):=\frac{1}{n^\alpha\log ^\beta(n+1)}$, $n\in\N$, with $\beta>0$. Then $\frac{v}{w}$ is a decreasing sequence and so Proposition \ref{P.Comparison}(i) implies that $\sC^{(1,v)}\in \cL(\ell_1(v))$.

(ii) Let $w(n):=\frac{1}{n\log ^\beta(n+1)}$, $n\in\N$, with $\beta>1$, in which case $\sC^{(1,w)}\in \cL(\ell_1(w))$ by part (i). Also, via Examples \ref{esempi-C}(ii), $v:=(\frac{1}{n^2})_{n\in\N}$ satisfies $\sC^{(1,v)}\in \cL(\ell_1(v))$. Consider the sequence $\frac{v}{w}=(\frac{\log^\beta(n+1)}{n})_{n\in\N}$. The derivative of the function $f(x):=\frac{\log^\beta(x+1)}{x}$ for $x\geq 1$ is given by
\[
f'(x)=\frac{(\beta x-(x+1)\log (x+1))\log^{\beta-1}(x+1)}{x^2(x+1)}
\]
and hence, $f$ is decreasing on $((e^\beta-1),\infty)$. So there exists $n_0\in\N$ such that $(\frac{v(n)}{w(n)})_{n=n_0}^\infty$ is decreasing. Since $\sC^{(1,v)}$ is not compact (by Remark \ref{R.W_N}(iii)), it follows from Proposition \ref{P.Comparison}(ii) that $\sC^{(1,w)}$ also fails to be compact.
\end{example}

\begin{remark}\label{R.Comparison_1}\rm Let $v,\ w$ be bounded, strictly positive sequences satisfying $A_1v\leq w\leq A_2 v$ for positive constants $A_1$, $A_2$. Then $\ell_1(v)$ and $\ell_1(w)$ are equal as vector spaces and the norms $\|\cdot\|_{1,v}$ and  $\|\cdot\|_{1,w}$ are equivalent. Accordingly, $\sC^{(1,w)}\in \cL(\ell_1(w))$ (resp. $\cK(\ell_1(w)$) if and only if $\sC^{(1,v)}\in \cL(\ell_1(v))$ (resp. $\cK(\ell_1(v)$). For instance, let $v=(\frac{1}{n^\alpha})_{n\in\N}$ with $\alpha>0$. Consider \textit{any} bounded, strictly positive sequence $\varphi$ satisfying $\gamma:=\inf_{n\in\N}\varphi(n)>0$. Then $w:=(\varphi(n)v(n))_{n\in\N}$ satisfies $\gamma v\leq w \leq \|\varphi\|_\infty v$. Via Examples \ref{esempi-C}(ii), $\sC^{(1,v)}\in \cL(\ell_1(v))$ and so also $\sC^{(1,w)}\in \cL(\ell_1(w))$. Remark \ref{R.W_N}(iii) shows that $\sC^{(1,v)}$ is not compact and hence, also $\sC^{(1,w)}$ fails to be compact. Or, suppose that $u\leq v$. Then $v\leq u+v\leq 2v$ and so  $\sC^{(1,u+v)}$ is continuous (resp. compact) if and only if  $\sC^{(1,v)}$ is continuous (resp. compact).
\end{remark}

\section{Spectrum of $\sC^{(1,w)}$}

The aim of this section is to provide some detailed knowledge of the spectrum of $\sC^{(1,w)}$. For $1<p<\infty$ it is known for \textit{every strictly positive, decreasing} weight $w$ that the spectrum of $\sC^{(p,w)}\in \cL(\ell_p(w))$ satisfies
\[
\sigma(\sC^{(p,w)})\su \{\lambda\in \C\colon |\lambda|\leq p'\}
\]
with $p'=\frac{p}{p-1}$ a constant \textit{independent} of $w$; see \eqref{e.norm_p} above and \cite[Theorem 3.3(i)]{ABR}. It will be shown, for $p=1$, that no such constant (independent of $w$) exists; see Example \ref{es.enne}. The spectrum of $\sC^{(1,w)}$ is characterized  in Theorem \ref{t:sp-dec-N}. Further properties of $\sigma(\sC^{(1,w)})$ are exhibited in Proposition \ref{t:sp-dec}. Whenever $\sC^{(1,w)}$ is a compact operator, a complete description of $\sigma(\sC^{(1,w)})$ is given in Proposition \ref{P_SR}. Several relevant examples are presented.

We begin by recalling the following known fact; see e.g. \cite[Proposition 4.1]{ABR-7}, \cite[Propositions 4.3 and 4.4]{ABR-1}. For convenience of notation we set $\Sigma:=\{\frac{1}{m}\colon m\in\N\}$ and $\Sigma_0:=\Sigma\cup\{0\}$. Recall that $\C^\N$ is a Fr\'echet space for the lc-topology of coordinatewise convergence.

\begin{lemma}\label{L1}
\begin{itemize}
\item[\rm (i)] The spectrum $\sigma(\sC,\C^\N)=\sigma_{pt}(\sC,\C^\N)=\Sigma$.
\item[\rm (ii)] Fix $m\in\N$. Let $x^{(m)}:=(x_{n}^{(m)})_n\in \C^\N$ where $x_{n}^{(m)}:=0$ for $n\in\{1,\ldots, m-1\}$, $x_{m}^{(m)}:=1$ and $x_{n}^{(m)}:=\frac{(n-1)!}{(m-1)!(n-m)!}$ for $n>m$. Then the $1$-dimensional eigenspace of $\frac{1}{m}$ is given by
\[
\Ker \left(\frac{1}{m}I-\sC\right)={\rm span}\{x^{(m)}\}\su\C^\N.
\]
\end{itemize}
\end{lemma}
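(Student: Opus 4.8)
The plan is to reduce the eigenvalue equation $\sC x=\lambda x$ to a first-order linear recurrence for the partial sums $S_n:=\sum_{k=1}^n x_k$ (with $S_0:=0$), and to read off both the point spectrum and the resolvent from it. Since $(\sC x)_n=\frac1n S_n$ and $x_n=S_n-S_{n-1}$, the equation $\sC x=\lambda x$ is equivalent to $(\lambda n-1)S_n=\lambda n\,S_{n-1}$ for every $n\in\N$. For $n=1$ this reads $(\lambda-1)S_1=0$, and for $n\ge2$ with $\lambda n\neq1$ it gives $S_n=\frac{\lambda n}{\lambda n-1}S_{n-1}$. I first note that $\sC$ does map $\C^\N$ into itself and is continuous for the topology of coordinatewise convergence, since each coordinate $(\sC x)_n$ is a linear function of the finitely many coordinates $x_1,\dots,x_n$.

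To prove part (ii) (and the inclusion $\Sigma\su\sigma_{pt}(\sC,\C^\N)$) I would fix $\lambda=\frac1m$. When $m\ge2$ the first step forces $S_1=0$, and for $1<n<m$ the factor $\frac{\lambda n}{\lambda n-1}$ is finite and nonzero, so $S_n=0$ and hence $x_n=0$ for all $n<m$. At the resonant index $n=m$ the coefficient $\lambda m-1$ vanishes, and the equation becomes $0\cdot S_m=\lambda m\,S_{m-1}=S_{m-1}=0$, which is automatically satisfied and leaves $S_m$ (hence $x_m$) as the single free parameter; I normalise $x_m:=1$, i.e. $S_m=1$. For $n>m$ the recurrence $S_n=\frac{n}{n-m}S_{n-1}$ telescopes to $S_n=\prod_{k=m+1}^n\frac{k}{k-m}=\binom{n}{m}$, so $x_n=S_n-S_{n-1}=\binom{n}{m}-\binom{n-1}{m}=\binom{n-1}{m-1}=\frac{(n-1)!}{(m-1)!(n-m)!}$, which is exactly the stated vector $x^{(m)}$ (the case $m=1$ giving the constant sequence). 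As $x_m$ is the only free parameter, $\Ker\!\left(\frac1m I-\sC\right)={\rm span}\{x^{(m)}\}$ is one-dimensional, which is part (ii).

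It remains to show $\sigma(\sC,\C^\N)=\Sigma$. The inclusion $\Sigma=\sigma_{pt}\su\sigma$ is immediate from the previous paragraph. For the reverse inclusion I would fix $\lambda\notin\Sigma$, so that $\lambda n\neq1$ for every $n$ (in particular $\lambda\neq1$), and prove that $\lambda I-\sC$ is a bijection of $\C^\N$. Injectivity is the homogeneous case above: $S_1=0$ and the nonvanishing factors force $S_n=0$ for all $n$, so $\Ker(\lambda I-\sC)=\{0\}$. For surjectivity, given $y\in\C^\N$ the equation $(\lambda I-\sC)x=y$ becomes $\frac{\lambda n-1}{n}S_n=y_n+\lambda S_{n-1}$; since $\lambda n-1\neq0$ for every $n$, this determines $S_1,S_2,\dots$ uniquely and recursively, and $x_n:=S_n-S_{n-1}$ provides a preimage in $\C^\N$. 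Thus $\lambda I-\sC$ is a continuous linear bijection of the Fr\'echet space $\C^\N$, so by the open mapping theorem its inverse is continuous and $\lambda\in\rho(\sC,\C^\N)$. Combining the two inclusions yields $\sigma(\sC,\C^\N)=\sigma_{pt}(\sC,\C^\N)=\Sigma$.

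The only genuinely delicate point is the book-keeping at the resonant index $n=m$, where the coefficient $\lambda m-1$ degenerates: this is precisely where the free parameter producing the eigenvector appears, and one must verify that the compatibility condition $S_{m-1}=0$ is met automatically (it is, because all earlier partial sums already vanish). Everything else is the routine solution of a triangular linear system; crucially, no convergence issues arise on $\C^\N$ because each coordinate of $\sC x$ involves only finitely many coordinates of $x$, so the recurrence can always be solved term by term.
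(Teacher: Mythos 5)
Your proof is correct. A point of comparison worth noting: the paper does not actually prove Lemma \ref{L1} at all --- it is ``recalled'' as a known fact with citations to \cite{ABR-7} and \cite{ABR-1} --- so your argument supplies a self-contained proof where the paper offers none. Your route (passing to the partial sums $S_n$, turning $\sC x=\lambda x$ into the first-order recurrence $(\lambda n-1)S_n=\lambda n S_{n-1}$, and exploiting the lower-triangular structure to solve $(\lambda I-\sC)x=y$ recursively when $\lambda n\neq 1$ for all $n$) is essentially the standard argument used in the cited references, and all the delicate points are handled correctly: the resonance at $n=m$ where the vanishing coefficient creates exactly one free parameter $S_m$ (the compatibility condition $S_{m-1}=0$ being automatic), the telescoping identity $S_n=\binom{n}{m}$ giving $x_n=\binom{n-1}{m-1}$ by Pascal's rule, which matches $x^{(m)}$ including the constant sequence for $m=1$; the inclusion of $\lambda=0$ in the resolvent (so that $\sigma(\sC,\C^\N)=\Sigma$ rather than $\Sigma_0$, consistent with the fact that the spectrum need not be closed in a non-normable Fr\'echet space); and the appeal to the open mapping theorem for Fr\'echet spaces to get continuity of the inverse (alternatively, continuity is immediate since each coordinate of the solution depends on only finitely many coordinates of $y$). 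Nothing is missing.
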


\begin{remark}\label{R.1n}\rm For $\lambda=1$,  the corresponding eigenvector for $\sC\colon \C^\N\to\C^\N$ is the constant vector $\mathbf{1}:= (1)_{n\in\N}$. Accordingly, if $w$ is any bounded, strictly positive weight such that $\sC^{(1,w)}\in \cL(\ell_1(w))$, then ${1}\in \sigma_{pt}(\sC^{(1,w)})$ if and only if $\mathbf{1}\in \ell_1(w)$, i.e., if and only if $w\in \ell_1$.
\end{remark}

The following inequalities, \cite[Lemma 3.2]{ABR}, \cite[Lemma 7]{R}, will be needed.

\begin{lemma}\label{l-prod}
 {\rm (i)} Let $\lambda\in \C\setminus\Sigma_0$ and set $\alpha:={\rm Re}\left(\frac{1}{\lambda}\right)$. Then there exist constants $d>0$ and $D>0$ {\rm (}depending on $\alpha${\rm )} such that
\begin{equation}\label{e.prod}
\frac{d}{n^\alpha}\leq \prod_{k=1}^n\left|1-\frac{1}{k\lambda}\right|\leq \frac{D}{n^\alpha}, \quad n\in\N.
\end{equation}

{\rm (ii)} For each $m\in\N$ we have  that
\begin{equation}\label{e.fact}
\frac{(n-1)!}{(n-m)!}\simeq  n^{m-1},\quad \mbox{for all large } n\in\N.
\end{equation}
\end{lemma}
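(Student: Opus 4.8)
The final statement is Lemma \ref{l-prod}, which has two parts. Let me plan proofs for both.

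\medskip

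\textbf{Plan for part (ii).} This is the easier estimate. The quantity $\frac{(n-1)!}{(n-m)!}$ is the product $(n-1)(n-2)\cdots(n-m+1)$, a product of exactly $m-1$ consecutive integers each of which lies between $n-m+1$ and $n-1$. Thus it is sandwiched as $(n-m+1)^{m-1}\leq \frac{(n-1)!}{(n-m)!}\leq (n-1)^{m-1}$, and both bounds are asymptotic to $n^{m-1}$ as $n\to\infty$ since $m$ is fixed. Dividing by $n^{m-1}$ and letting $n\to\infty$ gives the ratio tending to $1$, which is precisely the meaning of $\simeq$ here. I expect no obstacle in part (ii).

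\medskip

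\textbf{Plan for part (i).} The strategy is to take logarithms and compare the resulting sum with $-\alpha\log n$, controlling the error uniformly in $n$. Writing $P_n:=\prod_{k=1}^n\left|1-\frac{1}{k\lambda}\right|$, the goal is to show $\log P_n + \alpha\log n$ stays bounded above and below, so that $P_n\cdot n^{\alpha}$ is bounded between two positive constants. First I would write $\log\left|1-\frac{1}{k\lambda}\right|=\frac12\log\left|1-\frac{1}{k\lambda}\right|^2=\frac12\log\left(1-\frac{2\RR(1/\lambda)}{k}+\frac{|1/\lambda|^2}{k^2}\right)$, which for large $k$ expands as $-\frac{\alpha}{k}+O\!\left(\frac{1}{k^2}\right)$, where $\alpha=\RR(1/\lambda)$. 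Summing over $k$ and using $\sum_{k=1}^n\frac1k=\log n+O(1)$ together with the convergence of $\sum 1/k^2$, one gets $\log P_n=-\alpha\log n+O(1)$, with the $O(1)$ term bounded uniformly in $n$. Exponentiating then yields the two-sided bound \eqref{e.prod} with suitable constants $d,D>0$ depending on $\alpha$ (equivalently on $\lambda$).

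\medskip

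The one genuine care-point, and what I regard as the main obstacle, is ensuring that each individual factor is strictly positive and bounded away from degeneracy so that the logarithm and its expansion are legitimate for \emph{all} $k\in\N$, not merely asymptotically. Since $\lambda\notin\Sigma_0$, we have $\frac{1}{k\lambda}\neq 1$ for every $k$, so $\left|1-\frac{1}{k\lambda}\right|>0$ for all $k$; thus no factor vanishes and $P_n>0$. The asymptotic expansion of the logarithm is only valid once $\frac{1}{k|\lambda|}<1$, i.e. for $k$ beyond some $k_0$ depending on $\lambda$; the finitely many factors $k\leq k_0$ contribute a fixed positive multiplicative constant that I would simply absorb into $d$ and $D$. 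I would therefore split the product at $k_0$, handle the tail $k>k_0$ via the convergent-error argument above, and bundle the head into the constants. This is a standard argument, so I would present the splitting and the $O(1)$ bookkeeping succinctly rather than tracking explicit constants, noting that $d$ and $D$ depend on $\alpha$ exactly because the head and the $\sum 1/k^2$ tail depend on $\lambda$.
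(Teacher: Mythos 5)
Your proposal is correct, but note that the paper itself contains no proof of this lemma to compare against: it is quoted as a known result, with references to \cite[Lemma 3.2]{ABR} and \cite[Lemma 7]{R}. Both parts of your argument are sound. For (ii), the sandwich $(n-m+1)^{m-1}\leq (n-1)(n-2)\cdots(n-m+1)\leq (n-1)^{m-1}$ with $m$ fixed does the job immediately. For (i), your logarithmic bookkeeping is the standard argument and is watertight as planned: writing $\log\left|1-\tfrac{1}{k\lambda}\right|=\tfrac12\log\left(1-\tfrac{2\alpha}{k}+\tfrac{|1/\lambda|^2}{k^2}\right)$, the expansion gives $-\tfrac{\alpha}{k}+O(k^{-2})$ for $k$ large, the harmonic sum gives $-\alpha\log n+O(1)$ uniformly in $n$, and the finitely many initial factors (none of which vanish, precisely because $\lambda\notin\Sigma_0$) are absorbed into the constants $d,D$. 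You correctly identified the only genuine care-point, namely non-vanishing of the factors and the restriction of the logarithmic expansion to large $k$. An alternative, slightly slicker route used in the literature is via the Gamma function: with $a:=1/\lambda$ one has $\prod_{k=1}^n\left(1-\tfrac{a}{k}\right)=\frac{\Gamma(n+1-a)}{\Gamma(1-a)\,\Gamma(n+1)}$, and Stirling-type asymptotics give $\left|\frac{\Gamma(n+1-a)}{\Gamma(n+1)}\right|\simeq n^{-\alpha}$; this packages your $O(1)$ error analysis into known asymptotics, but your elementary version is complete and self-contained.
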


For every bounded, strictly positive weight $w=(w(n))_{n\in\N}$ recall  that
\begin{equation}\label{e.err_w}
R_w:=\{t\in\R\colon \sum_{n=1}^\infty n^tw(n)<\infty\}.
\end{equation}
In case $R_w\not=\R$ we define $t_0:=\sup R_w$.

\begin{prop}\label{P2-Sp} Let $w=(w(n))_{n=1}^\infty$ be a bounded, strictly positive sequence. The following conditions are equivalent.
\begin{itemize}
\item[\rm (i)] $(n^mw(n))_n\in \ell_1$ for all $m\in\N$.
\item[\rm (ii)] $w\in s$.
\item[\rm (iii)] $R_w=\R$.
\end{itemize}
If, in addition, $\sC^{(1,w)}\in \cL(\ell_1(w)$, then {\rm (i)-(iii)} are equivalent to
\begin{itemize}
\item[\rm (iv)] $\Sigma\su \sigma_{pt}(\sC^{(1,w)})$.
\end{itemize}
\end{prop}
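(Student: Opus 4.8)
The plan is to first settle the equivalence of (i), (ii), (iii), which needs no hypothesis on $\sC$, and then to treat (iv) separately via the explicit eigenvectors furnished by Lemma \ref{L1}.

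First I would note that (i) and (ii) are literally the same assertion, since by definition $w\in s$ means precisely that $(n^mw(n))_n\in\ell_1$ for every $m\in\N$. For the equivalence with (iii) the one elementary fact I need is that $R_w$ is \emph{downward closed}: if $t\in R_w$ and $s\le t$, then $n^s\le n^t$ for all $n\ge 1$, whence $\sum_n n^sw(n)\le\sum_n n^tw(n)<\infty$ and $s\in R_w$. The implication (iii)$\Rightarrow$(i) is immediate. For (i)$\Rightarrow$(iii), given any $t\in\R$ I choose $m\in\N$ with $m\ge t$; then $m\in R_w$ by (i), and downward closure gives $t\in R_w$. Hence $R_w=\R$.

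Next, under the standing assumption $\sC^{(1,w)}\in\cL(\ell_1(w))$, I would identify the point spectrum. Since $\ell_1(w)\su\C^\N$ and $\sC^{(1,w)}$ acts as the restriction of $\sC$, a scalar $\lambda$ lies in $\sigma_{pt}(\sC^{(1,w)})$ exactly when the $\C^\N$-eigenspace $\Ker(\lambda I-\sC)$ meets $\ell_1(w)$ nontrivially. By Lemma \ref{L1} the only candidates are the values $\tfrac1m$, $m\in\N$, each with the one-dimensional eigenspace spanned by $x^{(m)}$, so that $\tfrac1m\in\sigma_{pt}(\sC^{(1,w)})$ if and only if $x^{(m)}\in\ell_1(w)$, i.e.
\[
\sum_{n\ge m}w(n)\,\frac{(n-1)!}{(m-1)!(n-m)!}<\infty.
\]
The main step is to recast this as a condition on $R_w$, and this is precisely what Lemma \ref{l-prod}(ii) provides: since $\frac{(n-1)!}{(n-m)!}\simeq n^{m-1}$ for large $n$, the series above converges if and only if $\sum_n n^{m-1}w(n)<\infty$, that is, if and only if $m-1\in R_w$. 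Thus $\tfrac1m\in\sigma_{pt}(\sC^{(1,w)})$ iff $m-1\in R_w$.

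Finally I would assemble the equivalence (iii)$\Leftrightarrow$(iv). Statement (iv), namely $\Sigma\su\sigma_{pt}(\sC^{(1,w)})$, asserts that $\tfrac1m\in\sigma_{pt}(\sC^{(1,w)})$ for every $m\in\N$, which by the previous paragraph is equivalent to $\{m-1:m\in\N\}=\{0,1,2,\dots\}\su R_w$. By the downward closure of $R_w$, containing every nonnegative integer is equivalent to $R_w=\R$ (for $t<0$ use $0\in R_w$; for $t\ge 0$ use any integer $\ge t$), which is exactly (iii). The only genuinely technical point in the whole argument is the asymptotic reduction of the eigenvector-summability to the weight $n^{m-1}w(n)$, and since this is already delivered by Lemma \ref{l-prod}(ii), everything else is monotonicity bookkeeping for the set $R_w$.
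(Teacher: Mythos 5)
Your proposal is correct and follows essentially the same route as the paper: the equivalences (i)--(iii) are definitional bookkeeping, and (iv) is handled via the explicit eigenvectors $x^{(m)}$ of Lemma \ref{L1} together with the asymptotics $\frac{(n-1)!}{(n-m)!}\simeq n^{m-1}$ of Lemma \ref{l-prod}(ii). The only cosmetic difference is organizational: you first prove the clean characterization $\frac1m\in\sigma_{pt}(\sC^{(1,w)})\Leftrightarrow(m-1)\in R_w$ and then match (iv) against (iii), whereas the paper splits the same content into the two implications (iv)$\Rightarrow$(i) and (i)$\Rightarrow$(iv).
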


\begin{proof} (i)$\Leftrightarrow$(ii) follows from the definition of the space $s$.

(i)$\Leftrightarrow$(iii) follows from the definition of $R_w$; see \eqref{e.err_w}.

Assume now that $\sC^{(1,w)}\in \cL(\ell_1(w))$.

(iv)$\Rightarrow$(i) Fix $m\in\N$. Then $\frac{1}{m+1}\in \sigma_{pt}(\sC^{(1,w)})\su \sigma_{pt}(\sC, \C^\N)$ with $ x^{(m+1)}$  as  its eigenvector in $\C^\N$; see Lemma \ref{L1}. So, necessarily $x^{(m+1)}\in \ell_1(w)$, i.e., $(w(n)x^{(m+1)}_n)_{n\in\N}\in \ell_1$. But, this happens only if $(n^mw(n))_n\in \ell_1$; see \eqref{e.fact}.

(i)$\Rightarrow$(iv) Fix $m\in\N$. Then $(n^{m-1}w(n))_n\in \ell_1$ and so  the sequence $(w(n)x^{(m)}_n)_{n\in\N}\in \ell_1$, i.e., $x^{(m)}\in \ell_1(w)$, where $x^{(m)}$ is as in Lemma \ref{L1}. As $x^{(m)}$ is an eigenvector corresponding to the eigenvalue $\frac{1}{m}$ for $\sC$ acting on $\C^\N$, it follows that $\frac{1}{m}$ is also an eigenvalue  for $\sC^{(1,w)}$.
\end{proof}

Given a strictly positive, bounded sequence $w=(w(n))_{n\in\N}$, recall that $S_w(1):=\{s\in\R\colon \sup_{n\in\N} \frac{1}{n^sw(n)}<\infty\}$.
In case $S_w(1)\not=\emptyset$ we defined $s_1:=\inf S_w(1)$.
Since $\frac{1}{n^sw(n)}\geq \frac{1}{n^s\|w\|_\infty}$, for $n\in\N$, it follows that $s\not\in S_w(1)$ for every $s<0$ and hence, $S_w(1)\su[0,\infty)$. Accordingly, $s_1\geq 0$. If $w(n)\geq \alpha$ for all $n\in\N$ and some $\alpha>0$, then $\sC$ does not act in $\ell_1(w)$; see Remark \ref{R.VW}(i). So, we restrict our attention to weights $w$ with $\inf_{n\in\N}w(n)=0$. In this case $\frac{1}{w}\not\in \ell_\infty$. Hence, if $S_w(1)\not=\emptyset$ and $s\in S_w(1)$, then necessarily $s>0$ with $\frac{M}{n^s}\leq w(n)$ for some $M>0$ and all $n\in\N$. It follows that $[s,\infty)\su S_w(1)$. So, whenever $S_w(1)\not=\emptyset$ (with $\frac{1}{w}\not\in \ell_\infty$) we can conclude that $S_w(1)$ is an interval of the form $[s_1,\infty)$ or $(s_1,\infty)$ with $s_1\geq 0$.

Concerning  $R_w$ (see \eqref{e.err_w}), whenever $R_w\not=\R$ the quantity $t_0$ is finite with $t_0\geq -1$ and $R_w=(-\infty, t_0)$ or $R_w=(-\infty, t_0]$. Moreover, $R_w=\emptyset$ is impossible as $\sum_{n=1}^\infty n^tw(n)\leq \|w\|_\infty\sum_{n=1}^\infty n^t<\infty$ whenever $t<-1$.

\begin{prop}\label{P.S_R} Let $w$ be a bounded, strictly positive sequence.
\begin{itemize}
\item[\rm (i)] If $S_w(1)\not=\emptyset$, then $t_0\leq s_1$. In particular, $R_w\not=\R$.
\item[\rm (ii)] If $R_w\not=\R$, then $S_w(1)\su [t_0,\infty)$.
\item[\rm (iii)] If $w\in s$, then $S_w(1)=\emptyset$.
\end{itemize}
\end{prop}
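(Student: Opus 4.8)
The plan is to extract a single elementary estimate from membership in $S_w(1)$ and to feed it into a comparison with a classical $p$-series; this one idea drives all three parts.

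First I would record the key observation. If $s\in S_w(1)$, then $K:=\sup_{n\in\N}\frac{1}{n^sw(n)}$ is finite, and it is strictly positive since each term is positive. Rearranging $\frac{1}{n^sw(n)}\le K$ yields the pointwise lower bound $w(n)\ge \frac{M}{n^s}$ for all $n\in\N$, with $M:=1/K>0$. Note this uses nothing about $\inf_n w(n)$; it is purely the definition of $S_w(1)$, so it is available for the general bounded, strictly positive $w$ of the proposition. The force of this bound is that it converts information about $S_w(1)$ into a comparison between the defining series $\sum_n n^tw(n)$ of $R_w$ and the series $\sum_n n^{t-s}$, which converges precisely when $t-s<-1$.

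Next I would combine the two sets. For any $t\in R_w$ and any $s\in S_w(1)$, the lower bound gives $\sum_n n^tw(n)\ge M\sum_n n^{t-s}$; since the left-hand side is finite (as $t\in R_w$) and the terms are nonnegative, the right-hand side converges, forcing $t-s<-1$, i.e. $t<s-1$. This single inequality yields both (i) and (ii). For (i): assuming $S_w(1)\ne\emptyset$, fix $s\in S_w(1)$; then every $t\in R_w$ satisfies $t<s-1$, so $R_w$ is bounded above, whence $R_w\ne\R$ and $t_0=\sup R_w$ is well-defined with $t_0\le s-1$. Taking the infimum over $s\in S_w(1)$ gives $t_0\le s_1-1\le s_1$, which is the asserted (i) together with its ``in particular'' clause. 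For (ii): assuming $R_w\ne\R$, if $S_w(1)=\emptyset$ the inclusion is vacuous, while for any $s\in S_w(1)$ the inequality $t<s-1$ holding for all $t\in R_w$ gives $t_0\le s-1\le s$, so $s\ge t_0$; hence $S_w(1)\subseteq[t_0,\infty)$.

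Finally, part (iii) follows by combining (i) with Proposition \ref{P2-Sp}, which gives the equivalence $w\in s\iff R_w=\R$. The contrapositive of the ``in particular'' clause of (i), namely $S_w(1)\ne\emptyset\Rightarrow R_w\ne\R$, reads $R_w=\R\Rightarrow S_w(1)=\emptyset$. Thus $w\in s$ forces $R_w=\R$ and therefore $S_w(1)=\emptyset$. I expect no genuine obstacle here: the heart of the matter is the estimate $w(n)\ge Mn^{-s}$ and the $p$-series comparison. The only points requiring care are bookkeeping ones, namely checking that $R_w$ is bounded above before invoking $t_0$, and keeping track of the fact that $S_w(1)$ and $R_w$ may be half-open or closed, so that the passages to $\sup R_w$ and $\inf S_w(1)$ are taken in the correct direction.
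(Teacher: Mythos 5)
Your proof is correct, and while it rests on the same elementary lower bound ($s\in S_w(1)$ gives $w(n)\geq Mn^{-s}$), it is organized along a genuinely different route than the paper's. The paper proves each part by a separate direct estimate: for (i) it notes that $s>s_1$ gives $n^sw(n)\geq 1/M$, so the terms of $\sum_n n^sw(n)$ do not even tend to zero, hence $s\notin R_w$ and $t_0\leq s_1$; for (ii) it starts from $t<t_0$, uses $n^tw(n)\to 0$ to get $n^t\leq 1/w(n)$ eventually, and concludes that $1/(n^sw(n))\geq n^{t-s}$ is unbounded for every $s<t$; for (iii) it argues directly from $w\in s$, treating $r<0$ and $r\geq 0$ separately (using $w\in c_0$ and $\sum_n n^{1+[r]}w(n)<\infty$, respectively). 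You instead extract a single master inequality --- if $t\in R_w$ and $s\in S_w(1)$ then $\sum_n n^tw(n)\geq M\sum_n n^{t-s}$ forces $t<s-1$ by $p$-series comparison --- and let it drive (i) and (ii) simultaneously, then obtain (iii) as a formal corollary of (i) and Proposition \ref{P2-Sp} (whose equivalence $w\in s\Leftrightarrow R_w=\R$ indeed holds without the hypothesis $\sC^{(1,w)}\in\cL(\ell_1(w))$, so your appeal to it is legitimate). What your route buys is unification and a quantitatively sharper conclusion: you actually prove $t_0\leq s_1-1$ and $S_w(1)\su[t_0+1,\infty)$, bounds which are attained (e.g.\ for $w_\alpha=(n^{-\alpha})_{n\in\N}$, where $t_0=\alpha-1$ and $s_1=\alpha$), whereas the stated proposition only records $t_0\leq s_1$ and $S_w(1)\su[t_0,\infty)$. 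The one bookkeeping point worth making explicit in your write-up is that $R_w\neq\emptyset$, so that $t_0=\sup R_w$ is a finite real number; this is automatic because $w$ is bounded, whence $(-\infty,-1)\su R_w$, as the paper observes just before the proposition.
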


\begin{proof} (i) Fix any $s>s_1$. Then, for some $M>0$, we have $\frac{1}{n^sw(n)}\leq M$ for all $n\in\N$ and hence, $n^sw(n)\geq \frac{1}{M}$ for $n\in\N$. Accordingly, $(n^sw(n))_{n\in\N}\not\in\ell_1$, i.e., $s\not\in R_w$. This implies that $(s_1,\infty)\su \R\setminus R_w$, i.e., $t_0\leq s_1$.

(ii) Fix any $t<t_0$ in which case $\lim_{n\to\infty}t^nw(n)=0$. Hence, there exists $K\in\N$ such that $n^t\leq \frac{1}{w(n)}$ for $n\geq K$. So, for any $s\in \R$ we have  (as $\frac{1}{n^s}>0$ for $n\in\N$) that $\frac{1}{n^sw(n)}\geq\frac{n^t}{n^s}$ for all $n\geq K$. Hence, if $s<t$, then $(\frac{1}{n^sw(n)})_{n\in\N}\not\in\ell_\infty$ and so $s\not\in S_w(1)$. This implies that $S_w(1)\su [t_0,\infty)$.

(iii) Suppose $r<0$. Since $w\in c_0$, there exists $L\in\N$ such that $\frac{1}{w(n)}\geq 1$ for $n\geq L$. Hence, $\frac{1}{n^rw(n)}\geq\frac{1}{n^r}$ for all $n\geq L$ and so $r\not\in S_w(1)$. For $r\geq 0$ fixed, set $m:=1+[r]$. Since $\sum_{n=1}^\infty n^mw(n)<\infty$ (see Proposition \ref{P2-Sp}), there is $J\in\N$ such that $\frac{1}{w(n)}\geq n^m$ for $n\geq J$ and hence, $\frac{1}{n^rw(n)}\geq \frac{n^m}{n^r}$ for $n\geq J$, that is, $r\not\in S_w(1)$.
\end{proof}

\begin{remark}\label{R.2n}\rm (i) The converse of Proposition \ref{P.S_R}(iii) is not valid. Indeed, let $w=(w(n))_{n\in\N}$ be the strictly positive weight with $w\downarrow 0$ as given in \cite[Remark 3.2]{ABR-9}. It is shown there that there exists a strictly increasing sequence $(n(k))_{k\in\N}$ in $\N$ with the  property: for each $t\in\R$ we have
\[
\frac{1}{(n(k))^tw(n(k))}\geq k, \quad  k>t.
\]
Hence, $t\not\in S_w(1)$ and so $S_w(1)=\emptyset$. It is also shown in \cite{ABR-9} that $w\not\in s$.

(ii) If $v,\ w$ are bounded, strictly positive sequences with $w\leq v$, then $\frac{1}{v}\leq \frac{1}{w}$ from which it follows that $S_1(w)\su  S_1(v)$. Hence, $\inf S_1(v)\leq \inf S_1(w)$. Also, it is clear from \eqref{e.err_w} that $R_v\su R_w$ and so $\sup R_v\leq \sup R_w$.
\end{remark}

We now come to the main results of this section. The following result characterizes the spectrum of $\sC^{(1,w)}$.

\begin{theorem}\label{t:sp-dec-N} Let $w=(w(n))_{n\in\N}$ be a  bounded, strictly positive sequence such  that $\sC^{(1,w)}\in \cL(\ell_1(w))$.

{\rm (i)} The following inclusions hold:
\begin{equation}
\label{sp-point} \Sigma\su \Sigma_0\su \sigma(\sC^{(1,w)}).
\end{equation}

{\rm (ii)} Let $\lambda\not\in \Sigma_0$ and set $\alpha:={\rm Re}\left(\frac{1}{\lambda}\right)$. Then $\lambda\in \rho(\sC^{(1,w)})$ if and only if
\begin{equation}\label{e.spe_car}
\sup_{m\in\N}\frac{1}{m^\alpha w(m)}\sum_{n=m+1}^{\infty}\frac{w(n)}{n^{1-\alpha}}<\infty.
\end{equation}

{\rm (iii)}
Suppose that  $R_w\not=\R$, i.e., $t_0<\infty$. Then
\begin{equation}\label{e:point}
\left\{\frac{1}{m}\colon m\in\N,\ (m-1)\in R_w\right\}=\sigma_{pt}(\sC^{(1,w)})\su\left\{\frac{1}{m}\colon m\in\N,\ 1\leq m\leq t_0+1\right\}.
\end{equation}

In particular, $\sigma_{pt}(\sC^{(1,w)})$ is a finite subset of $\Sigma$ {\rm (}possibly empty{\rm )}.

If $R_w=\R$, then
\begin{equation}\label{e:pointN}
\sigma_{pt}(\sC^{(1,w)})=\Sigma.
\end{equation}
\end{theorem}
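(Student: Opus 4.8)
The plan is to dispatch the point-spectrum assertion (iii) first, then the inclusion (i) by a duality argument, and finally the resolvent characterization (ii), which is where the real work lies.

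\smallskip

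For (iii): since the inclusion $\ell_1(w)\hookrightarrow\C^\N$ is continuous, every eigenvector of $\sC^{(1,w)}$ is an eigenvector of $\sC$ on $\C^\N$, so Lemma \ref{L1}(i) already forces $\sigma_{pt}(\sC^{(1,w)})\subseteq\Sigma$. By Lemma \ref{L1}(ii), $\frac1m\in\sigma_{pt}(\sC^{(1,w)})$ if and only if the eigenvector $x^{(m)}$ belongs to $\ell_1(w)$. From $x^{(m)}_n=\frac{(n-1)!}{(m-1)!(n-m)!}\simeq n^{m-1}/(m-1)!$ (see \eqref{e.fact}), this membership is equivalent to $\sum_n n^{m-1}w(n)<\infty$, i.e.\ to $(m-1)\in R_w$; this is exactly the asserted equality in \eqref{e:point} and \eqref{e:pointN}. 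If $R_w=\R$ then every $m$ qualifies, giving \eqref{e:pointN}; if $R_w\neq\R$ then $R_w\subseteq(-\infty,t_0]$, so $(m-1)\in R_w$ forces $m\leq t_0+1$ and $\sigma_{pt}(\sC^{(1,w)})$ is finite.

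\smallskip

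For (i): I would use that $\sigma(\sC^{(1,w)})=\sigma((\sC^{(1,w)})')$. The dual operator acts on $(\ell_1(w))'=\ell_\infty(u)$, where $u=w^{-1}$, by $((\sC^{(1,w)})'f)_k=\sum_{n\geq k}\frac{f_n}{n}$. For each fixed $m\in\N$ the eigenequation $(\sC^{(1,w)})'f=\frac1m f$ reduces, on taking successive differences in $k$, to the recursion $f_{n+1}=(1-\frac mn)f_n$; starting from $f_1=1$ this produces a nonzero vector supported on $\{1,\dots,m\}$, since the factor at $n=m$ vanishes (killing all later terms) while $f_m=\prod_{n=1}^{m-1}(1-\frac mn)\neq0$. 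Being finitely supported, $f\in\ell_\infty(u)$, so $\frac1m\in\sigma_{pt}((\sC^{(1,w)})')\subseteq\sigma(\sC^{(1,w)})$. Hence $\Sigma\subseteq\sigma(\sC^{(1,w)})$, and since the spectrum is closed and $0=\lim_m\frac1m$, also $\Sigma_0\subseteq\sigma(\sC^{(1,w)})$, which is \eqref{sp-point}.

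\smallskip

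For (ii): fix $\lambda\notin\Sigma_0$; by Lemma \ref{L1}(i) the map $\lambda I-\sC$ is a bijection of $\C^\N$. I would solve $(\lambda I-\sC)x=y$ explicitly through the partial sums $S_n=\sum_{k\leq n}x_k$, which satisfy $(\lambda-\frac1n)S_n=\lambda S_{n-1}+y_n$. Writing $P_n:=\prod_{k=1}^n(1-\frac{1}{k\lambda})$ and telescoping gives $S_n=\frac{1}{\lambda P_n}\sum_{j=1}^n P_{j-1}y_j$, whence $x_n=S_n-S_{n-1}$ yields the lower-triangular resolvent matrix $R$ with diagonal $R_{nn}=\frac{n}{\lambda n-1}$ and $R_{nj}=\frac{P_{j-1}}{\lambda^2 nP_n}$ for $j<n$. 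Transporting $R$ to $\ell_1$ via the isometric isomorphism $\Phi_w$ turns it into the matrix $(\frac{w(n)}{w(j)}R_{nj})$, to which Lemma \ref{L_U} applies: $R\in\cL(\ell_1(w))$ iff $\sup_j\frac{1}{w(j)}\sum_{n\geq j}w(n)|R_{nj}|<\infty$. The diagonal contributes the uniformly bounded term $|\frac{j}{\lambda j-1}|$, and for the off-diagonal part the two-sided estimate $|P_n|\simeq n^{-\alpha}$ of Lemma \ref{l-prod}(i), with $\alpha={\rm Re}(1/\lambda)$, gives $\frac{|P_{j-1}|}{w(j)}\sum_{n>j}\frac{w(n)}{n|P_n|}\simeq\frac{1}{j^\alpha w(j)}\sum_{n>j}\frac{w(n)}{n^{1-\alpha}}$, so the column-sum condition is equivalent to \eqref{e.spe_car}. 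Finally, since $\lambda I-\sC$ is a bijection of $\C^\N$ with inverse $R$, any bounded inverse of $\lambda I-\sC^{(1,w)}$ on $\ell_1(w)$ must coincide with $R|_{\ell_1(w)}$; hence $\lambda\in\rho(\sC^{(1,w)})$ iff $R\in\cL(\ell_1(w))$, i.e.\ iff \eqref{e.spe_car} holds.

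\smallskip

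The main obstacle is part (ii): correctly deriving the resolvent matrix and, above all, converting the two-sided product asymptotics of Lemma \ref{l-prod}(i) into \emph{sharp} two-sided control of the column sums, so that the supremum is finite exactly when \eqref{e.spe_car} holds and the constants $d,D$ there do not corrupt the equivalence. Care is also needed to confirm that $R$ genuinely maps $\ell_1(w)$ into itself and inverts $\lambda I-\sC^{(1,w)}$ on both sides, which follows from the corresponding identities already valid on $\C^\N$.
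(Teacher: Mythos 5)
Your proposal is correct and follows essentially the same route as the paper's proof: duality with finitely supported eigenvectors of the transpose operator for (i), the explicit lower-triangular formal inverse transported to $\ell_1$ via $\Phi_w$ and tested with Lemma \ref{L_U} together with the two-sided product estimates of Lemma \ref{l-prod}(i) for (ii), and membership of the $\C^\N$-eigenvectors $x^{(m)}$ in $\ell_1(w)$ (via \eqref{e.fact}) for (iii). The only differences are cosmetic: you re-derive the resolvent matrix from the partial-sum recursion where the paper quotes it from Reade, and you give the point-spectrum argument directly where the paper defers to an adaptation of \cite{ABR}.
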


\begin{proof}The proof is via a series of steps.

(i)
The dual operator $A:=(\sC^{(1,w)})'\in \cL(\ell_{\infty}(w^{-1}))$ is given by
\begin{equation}\label{e:dualeop}
Ay=\left(\sum_{k=n}^\infty\frac{y_k}{k}\right)_{n\in\N}, \quad y=(y_n)_{n\in\N}\in \ell_{\infty}(w^{-1}).
\end{equation}

\textbf{Step 1.} $0\not \in \sigma_{pt}(A)$.

If $Ay=0$, for some $y\in\ell_{\infty}(w^{-1})$,  then  $z_n:=\sum_{k=n}^\infty\frac{y_k}{k}=0$ for all $n\in\N$. Hence,  $y_n={n}(z_n-z_{n+1})=0$, for $n\in\N$, and so $A$ is injective.

\textbf{Step 2.} $\Sigma\su \sigma_{pt}(A)$.

Let $\lambda\in \Sigma$, i.e., $\lambda=\frac{1}{m}$ for some $m\in\N$. Via \eqref{e:aut} below, the non-zero vector $y=(y_n)_{n\in\N}$ defined via $y_1\in\C\setminus\{0\}$ arbitrary,  $y_n:=y_1\prod_{k=1}^{n-1}\left(1-\frac{1}{\lambda k}\right)$ for $1<n\leq m$ and $y_n:=0$ for $n> m$, which  clearly belongs to $\ell_{\infty}(w^{-1})$, satisfies $Ay=\lambda y$.

\textbf{Step 3.} $\Sigma_0\su \sigma(\sC^{(1,w)})$.

For every $T\in \cL(X)$ with $X$ a Banach space, we have $\sigma_{pt}(T')\su \sigma(T)$, \cite[p.581]{DSI}, with $\sigma(T)$  closed in $\C$.  By Step 2 we then have $\Sigma_0\su \sigma(\sC^{(1,w)})$.

(ii) \textbf{Step 4.} \textit{Fix $\lambda\not\in \Sigma_0$. Then $\lambda\in \rho(\sC^{(1,w)})$ if and only if \eqref{e.spe_car} holds}.

 To verify this we argue in a similar way as in  \cite{ABR} or \cite{CR}. We recall the formula for $(\sC-\lambda I)^{-1}\colon \C^\N\to \C^\N$ whenever $\lambda\not \in \Sigma_0$, \cite[p.266]{R}. Namely, for $n\in\N$, the $n$-th row of the matrix for $(\sC-\lambda I)^{-1}$ has the entries
\[
\frac{-1}{n\lambda^2\prod_{k=m}^n\left(1-\frac{1}{\lambda k}\right)},\quad 1\leq m<n,
\]
\[
\frac{n}{1-n\lambda}=\frac{1}{\frac{1}{n}-\lambda}, \quad m=n,
\]
and all the other entries in row $n$ are equal to $0$. So,
we can write
\begin{equation}\label{e:dec}
(\sC-\lambda I)^{-1}=D_\lambda-\frac{1}{\lambda^2}E_\lambda,
\end{equation}
where the diagonal operator $D_\lambda=(d_{nm})_{n,m\in\N}$ is given by $d_{nn}:=\frac{1}{\frac{1}{n}-\lambda}$ and $d_{nm}:=0$ if $n\not=m$. The operator $E_\lambda=(e_{nm})_{n,m\in\N}$ is then the lower triangular matrix with $e_{1m}=0$ for all $m\in\N$ and for every $n\geq 2$, with $e_{nm}:=\frac{1}{n\prod_{k=m}^n\left(1-\frac{1}{\lambda k}\right)}$ if $1\leq m<n$ and $e_{nm}:=0$ if $m\geq n$.

As $\lambda\not \in \Sigma_0$, we have $d(\lambda):={\rm dist}(\lambda, \Sigma_0)>0$ and $|d_{nn}|\leq \frac{1}{d(\lambda)}$ for $n\in\N$. Hence, for every $x\in \ell_1(w)$, it follows that
\[
\|D_\lambda(x)\|_{1,w}=\sum_{n=1}^\infty |d_{nn}x_n|w(n)\leq \frac{1}{d(\lambda)}\sum_{n=1}^\infty|x_n|w(n)=\frac{1}{d(\lambda)}\|x\|_{1,w}.
\]
This means that $D_\lambda\in \cL(\ell_1(w))$. So, by \eqref{e:dec} it remains to show that $E_\lambda\in \cL(\ell_1(w))$ if  and only if  \eqref{e.spe_car} is satisfied for $\alpha:={\rm Re}\left(\frac{1}{\lambda }\right)$.

To this end, we note that $E_\lambda\in \cL(\ell_1(w))$ if  and only if the operator $\tilde{E}_\lambda\colon \C^\N\to \C^\N$ given by $\tilde{E}_\lambda=\Phi_wE\Phi_w^{-1}$, i.e.,
\[
(\tilde{E}_\lambda(x))_n=w(n)\sum_{m=1}^{n-1}\frac{e_{nm}}{w(m)}x_m, \quad x\in \C^\N,\ n\in\N,
\]
defines a continuous linear operator on $\ell_1$ (see the comments prior to Lemma \ref{L_U}). So, the claim is that $\tilde{E}_\lambda\in \cL(\ell_1)$ if and only if  \eqref{e.spe_car} is satisfied for $\alpha:={\rm Re}\left(\frac{1}{\lambda }\right)$.
To establish this claim,  observe that
\eqref{e.prod} implies
\begin{eqnarray}\label{e.Boun}
& & \frac{D^{-1}}{n^{1-\alpha}}\leq |e_{n1}|\leq \frac{d^{-1}}{n^{1-\alpha}},\quad n\geq 2,\nonumber\\
& &\frac{d'D^{-1}}{n^{1-\alpha}m^\alpha}\leq  |e_{nm}|\leq \frac{d^{-1}D'}{n^{1-\alpha}m^\alpha},\quad 2\leq m<n,
\end{eqnarray}
for some constants $d'>0$ and $D'>0$ depending on $\lambda$.

Suppose first that $\tilde{E}_\lambda\in \cL(\ell_1)$. Then Lemma \ref{L_U} implies that
\[
\sup_{m\in\N}\frac{1}{w(m)}\sum_{n=m+1}^{\infty}{w(n)|e_{nm}|}<\infty.
\]
By \eqref{e.Boun} we have $\sup_{m\in\N}\frac{1}{m^\alpha w(m)}\sum_{n=m+1}^{\infty}\frac{w(n)}{n^{1-\alpha}}<\infty$, i.e., \eqref{e.spe_car} is satisfied.

Conversely, if \eqref{e.spe_car} is satisfied, then
$\sup_{m\in\N}\frac{1}{m^\alpha w(m)}\sum_{n=m+1}^{\infty}\frac{w(n)}{n^{1-\alpha}}<\infty$. By \eqref{e.Boun} this implies that also
$\sup_{m\in\N}\frac{1}{w(m)}\sum_{n=m+1}^{\infty}{w(n)|e_{nm}|}<\infty$. Therefore, via Lemma \ref{L_U}, we can conclude that $\tilde{E}_\lambda\in \cL(\ell_1)$. The claim is proved.

 The proof of part (ii) is thereby complete.

(iii) Suppose first that $R_w\not =\R$.

\textbf{Step 5.} \textit{Both the equality and the inclusion in  \eqref{e:point} are valid.}

The proof of Step 5 is a routine adaption of the proof of Step 7 in the proof of Theorem 3.3 in \cite{ABR}; just substitute $p=1$ there. In particular, it follows that $\frac{1}{m}\in \sigma_{pt}(\sC^{(1,w)})$ if and only if $(m-1)\in R_w$.

The previous observation also allows us to adapt the argument of Step 8 in the proof of Theorem 3.3 in \cite{ABR} to establish the following (final)

\textbf{Step 6.}\textit{ Assume  that  $R_w=\R$. Then \eqref{e:pointN} is valid.}
\end{proof}

\begin{remark}\label{R.3n}\rm (i) Step 1 in the proof of Theorem \ref{t:sp-dec-N} implies that $\sC^{(1,w)}$ has dense range, i.e., $0$ belongs to the continuous spectrum of $\sC^{(1,w)}$.

(ii) It is clear from \eqref{e:point} that if $\frac{1}{M}\in \sigma_{pt}(\sC^{(1,w)})$ for some $M\in\N$, then also $\frac{1}{m}\in \sigma_{pt}(\sC^{(1,w)})$ for all $m\in\{1,\ldots, M\}$.

(iii) It can happen that $\sigma_{pt}(\sC^{(1,w)})=\emptyset$; see Example \ref{es.enne} below. In view of part (i) and Remark \ref{R.1n} this is equivalent to $1\not\in \sigma_{pt}(\sC^{(1,w)})$, i.e., $w\not\in \ell_1$.

(iv) Suppose $v,\ w$ are bounded, strictly positive sequences such that $\sC^{(1,w)}\in \cL(\ell_1(w))$ and $(\frac{v(n)}{w(n)})_{n=n_0}^\infty$ is  decreasing  for some $n_0\in\N$. Proposition \ref{P.Comparison}(i) implies that $\sC^{(1,v)}\in \cL(\ell_1(v))$. Let $\lambda\in \rho(\sC^{(1,w)})\setminus\Sigma_0$, i.e., \eqref{e.spe_car} holds for $\alpha:={\rm Re}\left(\frac{1}{\lambda}\right)$. Setting $\alpha_n:=\frac{v(n)}{w(n)}$ for $n\in\N$ it follows, for  $m\geq n_0$, that
\[
\frac{1}{m^\alpha v(m)}\sum_{n=m+1}^\infty \frac{v(n)}{n^{1-\alpha}}=\frac{1}{m^\alpha w(m)}\sum_{n=m+1}^\infty \frac{\alpha_m}{\alpha_n}\cdot\frac{w(n)}{n^{1-\alpha}}\leq \frac{1}{m^\alpha w(m)}\sum_{n=m+1}^\infty \frac{w(n)}{n^{1-\alpha}}.
\]
Hence, \eqref{e.spe_car} implies that $\gamma:=\sup_{m\geq n_0} \frac{1}{m^\alpha v(m)}\sum_{n=m+1}^\infty \frac{v(n)}{n^{1-\alpha}}<\infty$. Set $\delta:=\max\{\frac{1}{w(m)}\colon 1\leq m<n_0\}$. Then, for $m\in\{1,\ldots, n_0-1\}$, we have
\begin{eqnarray*}
& &\frac{1}{m^\alpha v(m)}\sum_{n=m+1}^\infty \frac{v(n)}{n^{1-\alpha}}=\frac{1}{m^\alpha v(m)}\sum_{k=m+1}^{n_0} \frac{v(k)}{k^{1-\alpha}}+\frac{n_0^\alpha v(n_0)}{m^\alpha v(m)}\cdot \frac{1}{n_0^\alpha v(n_0)}\sum_{n=n_0+1}^\infty \frac{v(n)}{n^{1-\alpha}}\\
& & \quad \leq \frac{1}{ v(m)}\sum_{k=m+1}^{n_0} \frac{v(k)}{k^{1-\alpha}}+\left(\frac{n_0}{m}\right)^\alpha\frac{\gamma v(n_0)}{v(m)}\leq \delta \sum_{k=1}^{n_0} \frac{v(k)}{k^{1-\alpha}}+n_0^\alpha\delta\gamma v(n_0).
\end{eqnarray*}
Accordingly, $\sup_{m\in\N} \frac{1}{m^\alpha v(m)}\sum_{n=m+1}^\infty \frac{v(n)}{n^{1-\alpha}}<\infty$ and so Theorem \ref{t:sp-dec-N}(ii), applied to $v$, shows that $\lambda\in \rho(\sC^{(1,v)})\setminus\Sigma_0$, that is
\[
\sigma(\sC^{(1,v)})\su \sigma(\sC^{(1,v)})\cup \Sigma_0\su \sigma(\sC^{(1,w)})\cup \Sigma_0.
\]

Of course, if bounded, strictly positive sequences $v$ and $w$ satisfy $A_1 v\leq w\leq A_2 v$ for positive constants $A_1,\ A_2$ and $\sC^{(1,w)}\in \cL(\ell_1(w))$, then Remark \ref{R.Comparison_1} implies that $\sigma(\sC^{(1,w)})= \sigma(\sC^{(1,v)})$. As an application, for fixed $\alpha>1$ consider the sequence $w$ given by $w(1)=w(2)=1$ and $w(n):=\frac{1}{i^\alpha 2^{i-1}}$ for $2^i+1\leq n\leq 2^{i+1}$ and $i\in\N$. Define $v(n):=\frac{1}{n\log^\alpha(n+1)}$ for $n\in\N$. Then
\begin{equation}\label{eq.comp_1}
A_1 v\leq w\leq A_2 v
\end{equation}
for positive constants $A_1,\ A_2$. To establish \eqref{eq.comp_1}, fix $n\geq 3$ and select $i\in\N$ such that $2^i+1\leq n\leq 2^{i+1}$. Then
\[
\frac{w(n)}{v(n)}=\frac{n\log^\alpha(n+1)}{i^\alpha 2^{i-1}}\geq \frac{2^i\log^\alpha(2^i)}{i^\alpha2^{i-1}}=2\log^\alpha 2.
\]
Since $\frac{w(1)}{v(1)}=\log ^\alpha 2$ and $\frac{w(2)}{v(2)}=2\log^\alpha 3$ it follows, with $A_1=2\log^\alpha 3$ that the first inequality in \eqref{eq.comp_1} is satisfied. Concerning the other inequality in \eqref{eq.comp_1} observe, still with $n$ and $i$ as above, that
\[
\frac{w(n)}{v(n)}=\frac{n\log^\alpha(n+1)}{i^\alpha 2^{i-1}}\leq \frac{2^{i+1}\log^\alpha (2^{i+2})}{i^\alpha 2^{i-1}}=4\left(\frac{i+2}{i}\right)^\alpha \log^\alpha 2,\ n\geq 3.
\]
Since $\lim_{i\to\infty }\frac{i+2}{i}=1$, there exists $A_2>0$ such that $w\leq A_2v$. This establishes \eqref{eq.comp_1}. It is shown in Example \ref{E.ConNOComp}(ii) that $\sC^{(1,v)}\in \cL(\ell_1(v))$. According to \eqref{eq.comp_1} and Remark \ref{R.Comparison_1} also $\sC^{(1,w)}\in \cL(\ell_1(w))$. The above discussion and \eqref{eq.comp_1} then imply that
\begin{equation}\label{eq.comp_2}
\sigma_{pt}(\sC^{(1,v)})=\sigma_{pt}(\sC^{(1,w)})\ \mbox{ and }\ \sigma(\sC^{(1,v)})=\sigma(\sC^{(1,w)}).
\end{equation}
Combining Fact 8 of Example \ref{E.an} below with \eqref{eq.comp_2} yields
\[
\sigma_{pt}(\sC^{(1,v)})=\{1\}\ \mbox{ and }\ \sigma(\sC^{(1,v)})=\left\{\lambda\in\C\colon \left|\lambda-\frac{1}{2}\right|\leq \frac{1}{2}\right\}.
\]
\end{remark}

It was noted above that always $s_1\geq 0$. Note, for $w=(\frac{1}{\log (n+1)})_{n\in\N}$, that $s_1=0$ and  $w\downarrow 0$.  But, $\sC$ does not
act in  $\ell_1(w)$;  see Example \ref{esempi-C}(i). For weights $w$  with $\sC^{(1,w)}\in \cL(\ell_1(w))$ this phenomenon cannot occur.

\begin{prop}\label{t:sp-dec} Let  $w$ be a bounded, strictly positive sequence such that $\sC^{(1,w)}\in \cL(\ell_1(w))$ and $S_w(1)\not=\emptyset$.

{\rm (i)} It is necessarily the case that $s_1>0$.

{\rm (ii)} For the dual operator $(\sC^{(1,w)})'\in \cL(\ell_\infty(w^{-1}))$ of $\sC^{(1,w)}$ we have
\begin{equation}\label{e:spdD}
\left\{\lambda\in \C\colon \left|\lambda-\frac{1}{2s_1}\right|< \frac{1}{2s_1}\right\}\cup \Sigma \su \sigma_{pt}((\sC^{(1,w)})')
\end{equation}

and

\begin{equation}\label{e:spd-2}
\sigma_{pt}((\sC^{(1,w)})')\setminus \Sigma\su \left\{\lambda\in \C\colon \left|\lambda-\frac{1}{2s_1}\right|\leq \frac{1}{2s_1}\right\}.
\end{equation}

For the Ces\`aro operator $\sC^{(1,w)}$ itself we have
\begin{equation}\label{e:spd}
\left\{\lambda\in \C\colon \left|\lambda-\frac{1}{2s_1}\right|\leq \frac{1}{2s_1}\right\}\cup \Sigma\su \sigma(\sC^{(1,w)}).
\end{equation}
\end{prop}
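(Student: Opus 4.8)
The organizing observation is a dictionary between the two descriptions of the disk and the quantity $\alpha:=\mathrm{Re}(1/\lambda)$ governing the resolvent test \eqref{e.spe_car}. For $\lambda\ne 0$ one has $\mathrm{Re}(1/\lambda)=\mathrm{Re}(\lambda)/|\lambda|^2$, and completing the square shows that, for $s_1>0$,
\[
\Big|\lambda-\tfrac{1}{2s_1}\Big|\le\tfrac{1}{2s_1}\iff\mathrm{Re}\Big(\tfrac{1}{\lambda}\Big)\ge s_1,
\]
with the strict inequalities matching. Thus the closed (resp. open) disk is exactly $\{0\}\cup\{\lambda\ne0:\alpha\ge s_1\}$ (resp. $\{\lambda\ne0:\alpha>s_1\}$). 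The engine of the whole argument is the following consequence of Theorem \ref{t:sp-dec-N}(ii): if $\lambda\notin\Sigma_0$ has $\alpha>s_1$, then choosing $s\in S_w(1)$ with $s_1<s<\alpha$ (possible since $S_w(1)\supseteq(s_1,\infty)$) gives $w(n)\ge M_s n^{-s}$, whence
\[
\sum_{n=m+1}^\infty\frac{w(n)}{n^{1-\alpha}}\ge M_s\sum_{n=m+1}^\infty\frac{1}{n^{1-(\alpha-s)}}=\infty
\]
for every $m$, because $\alpha-s>0$. Hence \eqref{e.spe_car} fails and $\lambda\in\sigma(\sC^{(1,w)})$. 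Crucially, this step never uses $s_1>0$.

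For part (i) I would argue by contradiction using boundedness of the spectrum. If $s_1=0$, then every real $\lambda>1$ lies outside $\Sigma_0$ and satisfies $\alpha=1/\lambda>0=s_1$, so the mechanism above puts $\lambda\in\sigma(\sC^{(1,w)})$. Letting $\lambda\to\infty$ makes $\sigma(\sC^{(1,w)})$ unbounded, contradicting $r(\sC^{(1,w)})\le\|\sC^{(1,w)}\|<\infty$. Therefore $s_1>0$.

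For the point spectrum of the dual $A=(\sC^{(1,w)})'$, recall from \eqref{e:dualeop} that $Ay=(\sum_{k\ge n}y_k/k)_n$. Solving $Ay=\lambda y$ for $\lambda\notin\Sigma_0$ by differencing consecutive rows forces $y_{n+1}=y_n(1-\tfrac{1}{n\lambda})$, so up to a scalar $y_n=\prod_{k=1}^{n-1}(1-\tfrac{1}{k\lambda})$, which by Lemma \ref{l-prod}(i) satisfies $|y_n|\asymp n^{-\alpha}$. If $\alpha\in S_w(1)$ then $\alpha\ge s_1>0$ by part (i), so $\sum_k|y_k|/k<\infty$ and a telescoping argument (the difference of $\sum_{k\ge n}y_k/k$ and $\lambda y_n$ is constant and vanishes at infinity) confirms $Ay=\lambda y$; conversely any eigenvector has this form with $y_1\ne0$. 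Since $y\in\ell_\infty(w^{-1})$ means $\sup_n|y_n|/w(n)<\infty$, the two-sided estimate gives
\[
\lambda\in\sigma_{pt}(A)\setminus\Sigma_0\iff\alpha=\mathrm{Re}\big(\tfrac1\lambda\big)\in S_w(1).
\]
Together with $\Sigma\subseteq\sigma_{pt}(A)$ and $0\notin\sigma_{pt}(A)$ (Steps 2 and 1 in the proof of Theorem \ref{t:sp-dec-N}), the inclusions \eqref{e:spdD} and \eqref{e:spd-2} follow from the dictionary: $\alpha>s_1\Rightarrow\alpha\in(s_1,\infty)\subseteq S_w(1)$ places the open disk in $\sigma_{pt}(A)$, while $\alpha\in S_w(1)\subseteq[s_1,\infty)\Rightarrow\alpha\ge s_1$ confines $\sigma_{pt}(A)\setminus\Sigma$ to the closed disk. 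Finally \eqref{e:spd} follows directly from the engine: its non-$\Sigma_0$ points put the open disk in $\sigma(\sC^{(1,w)})$, its $\Sigma$-points are covered by \eqref{sp-point}, and since the spectrum is closed it contains the closure of the open disk, namely the closed disk; adjoining $\Sigma$ via \eqref{sp-point} completes the inclusion.

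The main obstacle is the boundary circle $\alpha=s_1$, where $S_w(1)$ may be either $[s_1,\infty)$ or $(s_1,\infty)$ and the divergence mechanism can fail. The plan sidesteps it twice: for $\sigma(\sC^{(1,w)})$ by invoking closedness of the spectrum rather than analysing $\alpha=s_1$ directly, and for $\sigma_{pt}(A)$ by only asserting the one-sided inclusions \eqref{e:spdD}, \eqref{e:spd-2}, which are insensitive to whether the endpoint $s_1$ lies in $S_w(1)$. A secondary point to handle carefully is the convergence and telescoping that justify $Ay=\lambda y$ for the candidate eigenvector, which is precisely where $s_1>0$ from part (i) is used.
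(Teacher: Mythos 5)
Your proof is correct and follows essentially the same route as the paper's: part (i) by exhibiting an unbounded set in the spectrum through failure of \eqref{e.spe_car} in Theorem \ref{t:sp-dec-N}(ii), and \eqref{e:spdD}--\eqref{e:spd-2} via the product formula for eigenvectors of the dual operator together with the two-sided estimates of Lemma \ref{l-prod}(i) and the equivalence $|\lambda-\tfrac{1}{2s_1}|\leq\tfrac{1}{2s_1}\Leftrightarrow{\rm Re}(1/\lambda)\geq s_1$. The only differences are minor and both sound: you deduce \eqref{e:spd} directly from the resolvent criterion plus closedness of the spectrum, whereas the paper passes through $\sigma_{pt}((\sC^{(1,w)})')\subseteq\sigma(\sC^{(1,w)})$, and you make explicit (via the telescoping argument, which is exactly where $s_1>0$ from part (i) enters) the verification that the candidate vector genuinely satisfies $Ay=\lambda y$, a point the paper leaves implicit.
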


\begin{proof} (i) Suppose that $s_1:=\inf S_w(1)=0$. Fix any $s>0$. Then $w(n)\geq \frac{c(s)}{n^s}$ for some constant $c(s)>0$ and all $n\in\N$. Hence, $\sum_{n=1}^\infty \frac{w(n)}{n^{1-s}}\geq c(s) \sum_{n=1}^\infty \frac{1}{n}$ which shows that $\sum_{n=1}^\infty \frac{w(n)}{n^{1-s}}$ \textit{diverges} (for every $s>0$).

Fix $s>0$ with $s\not\in\Sigma$ and  set $\lambda:=\frac{1}{s}\in\R$. By the previous paragraph $\sum_{n=1}^\infty \frac{w(n)}{n^{1-s}}$ diverges.
   Theorem \ref{t:sp-dec-N}(ii) implies (put $m=1$ in \eqref{e.spe_car}) that $\lambda\not\in \rho(\sC^{(1,w)})$, i.e., $\lambda\in \sigma(\sC^{(1,w)})$. So,
   the unbounded set $\{\frac{1}{s}\colon s>0\}\setminus \Sigma$ is contained in $\sigma(\sC^{(1,w)})$;  impossible. Hence,  $s_1>0$.

(ii)  We proceed by a series of steps. Denote again by $A\in \cL(\ell_\infty(v))$ the dual operator of $\sC^{(1,w)}$, where $v:=w^{-1}$.

\textbf{Step 1.} $\left\{\lambda\in \C\colon \left|\lambda-\frac{1}{2s_1}\right|< \frac{1}{2s_1}\right\}\su \sigma_{pt}(A)$.

Let $\lambda\in \C\setminus\{0\}$. Then  $Ay=\lambda y$ is satisfied for some non-zero $y\in\ell_{\infty}(v)$   if and only if $\lambda y_n=\sum_{k=n}^\infty \frac{y_k}{k}$ for all $n\in\N$. This yields, for every $n\in\N$, that $\lambda (y_n-y_{n+1})=\frac{y_n}{n}$ and so
$y_{n+1}=\left(1-\frac{1}{\lambda n}\right)y_n$.
It follows that
\begin{equation}\label{e:aut}
y_{n+1}=y_1\prod_{k=1}^n \left(1-\frac{1}{\lambda k}\right),\quad  n\in\N,
\end{equation}
with $y_1\not=0$. In particular,
 each eigenvalue of $A$ is simple.

Let now $\lambda\in\C\setminus\Sigma$ satisfy $\left|\lambda-\frac{1}{2s_1}\right|<\frac{1}{2s_1}$ (equivalently,  $\alpha:={\rm Re}\left(\frac{1}{\lambda}\right)>{s_1}$); note that $\lambda\not=0$. For such a $\lambda$ the vector $y=(y_n)_{n\in\N}\in \C^\N$ defined by \eqref{e:aut} actually belongs to $\ell_{\infty}(v)$. Indeed,  via  Lemma \ref{l-prod}(i) there exists $c=c(\lambda)>0$ such  that
\[
\prod_{k=1}^n \left|1-\frac{1}{\lambda k}\right|\leq cn^{-{\rm Re}(1/\lambda)},\quad n\in\N.
\]
It then follows from \eqref{e:aut} that
\[
|y_n|w(n)^{-1} =|y_1|w(n)^{-1} \prod_{k=1}^n \left|1-\frac{1}{\lambda k}\right|\leq  c|y_1|n^{-{\rm Re}(1/\lambda)}w(n)^{-1},
\]
where the sequence $( n^{-{\rm Re}(1/\lambda)}w(n)^{-1})_{n\in\N}$ is bounded because ${\rm Re}(1/\lambda)\in S_w(1)$. That is, $y\in \ell_{\infty}(v)$. Hence, $\lambda\in \sigma_{pt}(A)$.

\textbf{Step 2.} $\sigma_{pt}(A)\setminus \Sigma_0\su \left\{\lambda\in \C\colon \left|\lambda-\frac{1}{2s_1}\right|\leq \frac{1}{2s_1}\right\}$.

Fix $\lambda\in  \sigma_{pt}(A)\setminus \Sigma_0$. According to \eqref{e.prod}
 there is $\beta=\beta(\lambda)>0$ such that
\begin{equation}\label{e:stima-n}
\prod_{k=1}^n \left|1-\frac{1}{\lambda k}\right|\geq \beta\cdot n^{-{\rm Re}(1/\lambda)},\quad n\in\N.
\end{equation}
But, as argued in Step 1 (for any $y_1\in \C\setminus\{0\}$) the eigenvector $y=(y_n)_{n\in\N}$ corresponding to the eigenvalue $\lambda$ of $A$, which necessarily belongs to $\ell_{\infty}(v)$, i.e., $\sup_{n\in\N} |y_n|w(n)^{-1}<\infty$, is given by \eqref{e:aut}. Then \eqref{e:stima-n} implies that also
 $\sup_{n\in\N} \frac{1}{n^{{\rm Re}(1/\lambda)}w(n)}<\infty$ (i.e.,  ${\rm Re}\left(\frac{1}{\lambda}\right)\in S_w(1)$) and so ${\rm Re}\left(\frac{1}{\lambda}\right)\geq s_1$, that is,  $\lambda\in \left\{\mu\in \C\colon \left|\mu-\frac{1}{2s_1}\right|\leq \frac{1}{2s_1}\right\}$.

It is clear that Steps 1-2 above, together with Steps 1 and 2 in the proof of Theorem \ref{t:sp-dec-N}, establish the two containments in \eqref{e:spdD} and  \eqref{e:spd-2}.

For  $T\in \cL(X)$, with $X$ a Banach space,  $\sigma_{pt}(T')\su \sigma(T)$, \cite[p.581]{DSI}, with $\sigma(T)$  closed in $\C$. So, \eqref{e:spd} follows from \eqref{e:spdD}.
\end{proof}

\begin{remark}\label{R.W_NN}\rm The converse of Proposition \ref{t:sp-dec}(i) is not valid. Indeed, for \textit{every} $\alpha>0$ the exists a weight $w\downarrow 0$ with $s_1=\alpha$ but, $\sC$ does \textit{not} act in $\ell_1(w)$. To see this let $(k_j)_{j\in\N}\su \N$ (with $k_1:=1$)  be a strictly increasing sequence satisfying \eqref{eq.magg}. Define $w(1):=1$ and $w(n):=\frac{1}{(k_j+1)^\alpha}$ for each $j\in\N$ and $k_j+1\leq n\leq k_{j+1}$. For $s:=\alpha$ observe, if $j\in\N$ and  $k_j+1\leq n\leq k_{j+1}$, then
\[
\frac{1}{n^sw(n)}=\frac{(k_j+1)^\alpha}{n^\alpha}\leq \frac{(k_j+1)^\alpha}{(k_j+1)^\alpha}=1.
\]
Hence, $\sup_{n\in\N}\frac{1}{n^sw(n)}<\infty$ and so $\alpha\in S_w(1)$, i.e., $[\alpha,\infty)\su S_w(1)$. On the other hand, for each $j\in\N$ and $n:=k_j+1$, we have for each $s<\alpha$ that
\[
\sup_{j\in\N}\frac{1}{n^sw(n)}=\sup_{j\in\N} \frac{(k_j+1)^\alpha}{(k_j+1)^s}=\infty.
\]
It follows that $\sup_{n\in\N}\frac{1}{n^sw(n)}=\infty$ and so $s\not\in S_w(1)$. Hence, we have established that  $S_w(1)=[\alpha,\infty)$, i.e., $s_1=\alpha$. Arguing as in the proof of Proposition \ref{P.W_N}(i) it follows that $\sup_{j\in\N}\frac{1}{w(k_j+1)}\sum_{m=k_j+1}^\infty \frac{w(m)}{m}=\infty$ and so, via Proposition \ref{P1-C}(i), $\sC$ does \textit{not} act in $\ell_1(w)$.
\end{remark}

According to Proposition \ref{P.S_R}(iii), such weights $w$ (with $s_1 > 0$) cannot exist in $s$.

The following result should be compared with \cite[Proposition 4.1]{ABR}.

\begin{prop}\label{P_SR} Let $w$ be a bounded, strictly positive weight such that $\sC^{(1,w)}\in \cK(\ell_1(w))$. Then the following properties hold.
\begin{itemize}
\item[\rm (i)] $\sigma_{pt}(\sC^{(1,w)})=\Sigma$ and $\sigma(\sC^{(1,w)})=\Sigma_0$.
\item[\rm (ii)] $w\in s$.
\end{itemize}
\end{prop}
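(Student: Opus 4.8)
The plan is to read off the proposition as a synthesis of the spectral description already obtained in Theorem \ref{t:sp-dec-N} together with the classical Riesz--Schauder structure theory for compact operators. Recall that if $T$ is a compact operator on an infinite dimensional Banach space, then $0\in\sigma(T)$, every \emph{nonzero} point of $\sigma(T)$ is an eigenvalue, and the eigenvalues can accumulate only at $0$ (see \cite[Ch. VII]{DSI}). Since $\ell_1(w)$ is infinite dimensional and $\sC^{(1,w)}\in\cK(\ell_1(w))$, this applies and gives in particular
\[
\sigma(\sC^{(1,w)})\setminus\{0\}=\sigma_{pt}(\sC^{(1,w)})\setminus\{0\},\qquad 0\in\sigma(\sC^{(1,w)}).
\]
The first thing I would do is upgrade a spectral containment to an eigenvalue containment. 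By Theorem \ref{t:sp-dec-N}(i) the inclusion $\Sigma_0\su\sigma(\sC^{(1,w)})$ holds unconditionally; since every element $\frac1m$ of $\Sigma$ is nonzero, each such $\frac1m$ is a nonzero point of $\sigma(\sC^{(1,w)})$ and hence, by the Riesz theory, an eigenvalue. Thus $\Sigma\su\sigma_{pt}(\sC^{(1,w)})$.

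This already yields (ii). Indeed, $\Sigma$ is an infinite set, whereas Theorem \ref{t:sp-dec-N}(iii) asserts that $\sigma_{pt}(\sC^{(1,w)})$ is \emph{finite} whenever $R_w\neq\R$. Hence necessarily $R_w=\R$, and by the equivalence of conditions (ii) and (iii) in Proposition \ref{P2-Sp} this is exactly the statement $w\in s$. (Equivalently, one may invoke Proposition \ref{P2-Sp} directly: since $\sC^{(1,w)}\in\cL(\ell_1(w))$ and $\Sigma\su\sigma_{pt}(\sC^{(1,w)})$, condition (iv) there holds, forcing $w\in s$.) For (i), I would then feed $R_w=\R$ back into Theorem \ref{t:sp-dec-N}: equation \eqref{e:pointN} gives $\sigma_{pt}(\sC^{(1,w)})=\Sigma$. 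Combining this with the Riesz identity above, $\sigma(\sC^{(1,w)})\setminus\{0\}=\sigma_{pt}(\sC^{(1,w)})\setminus\{0\}=\Sigma$, and since $0\in\sigma(\sC^{(1,w)})$ we conclude $\sigma(\sC^{(1,w)})=\Sigma\cup\{0\}=\Sigma_0$, as required.

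The argument is essentially bookkeeping once Theorem \ref{t:sp-dec-N} and Proposition \ref{P2-Sp} are in hand, so there is no genuinely hard step; the one nontrivial external input is the Riesz--Schauder dichotomy that turns the nonzero spectrum into eigenvalues. It is worth noting that the explicit compactness criterion \eqref{e.Comp} is not actually needed here, nor is the disc statement of Proposition \ref{t:sp-dec}; in fact the latter provides an independent consistency check, since $S_w(1)\neq\emptyset$ would place an \emph{uncountable} disc inside $\sigma(\sC^{(1,w)})$, which is impossible for a compact operator, forcing $S_w(1)=\emptyset$, in harmony with $w\in s$ via Proposition \ref{P.S_R}(iii). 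The only point to handle with care is to make sure one uses the elements of $\Sigma$ being nonzero (so that $0\notin\Sigma$ does not interfere) when converting the spectral inclusion into an eigenvalue inclusion.
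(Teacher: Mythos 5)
Your proof is correct, but it is organized along a genuinely different route than the paper's, so a comparison is worthwhile. Both arguments rest on the same two pillars: the Riesz--Schauder fact that every nonzero spectral point of a compact operator is an eigenvalue, and the unconditional inclusion $\Sigma_0\su\sigma(\sC^{(1,w)})$ from Theorem \ref{t:sp-dec-N}(i). The paper, however, proves (i) first and independently of (ii): the upper bound $\sigma_{pt}(\sC^{(1,w)})\su\Sigma$ comes for free from Lemma \ref{L1}, because any eigenvector of $\sC^{(1,w)}$ in $\ell_1(w)$ is an eigenvector of $\sC$ acting on $\C^\N$, whose point spectrum is exactly $\Sigma$; combined with the injectivity of $\sC$ on $\C^\N$ (so $0\notin\sigma_{pt}$) and the Riesz identity $\sigma_{pt}(\sC^{(1,w)})=\sigma(\sC^{(1,w)})\setminus\{0\}$, both equalities in (i) drop out at once, and (ii) then follows by the same finiteness contradiction with Theorem \ref{t:sp-dec-N}(iii) that you use. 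You reverse the dependency: you first upgrade $\Sigma\su\sigma(\sC^{(1,w)})$ to $\Sigma\su\sigma_{pt}(\sC^{(1,w)})$ via Riesz theory, deduce (ii) from the infiniteness of $\Sigma$ (or directly from Proposition \ref{P2-Sp}(iv)), and only then recover the equality $\sigma_{pt}(\sC^{(1,w)})=\Sigma$ from \eqref{e:pointN}, i.e., from Step 6 of Theorem \ref{t:sp-dec-N}. Both routes are sound; the paper's is marginally lighter, since the containment via Lemma \ref{L1} is a soft one-line observation, whereas \eqref{e:pointN} is the heaviest part of Theorem \ref{t:sp-dec-N}(iii) (adapted from the $\ell_p(w)$ arguments of \cite{ABR}). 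On the other hand, your ordering has the pleasant feature that (ii) is secured before any upper bound on the point spectrum is needed, and your closing remark that compactness forces $S_w(1)=\emptyset$ (consistent with Proposition \ref{P.S_R}(iii)) is a legitimate sanity check that the paper records separately, in Remark \ref{R.4n}(i).
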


\begin{proof} (i) It is clear that $0\not\in \sigma_{pt}(\sC^{(1,w)})$ as $\sC\colon \C^\N\to\C^\N$ is injective. The compactness of $\sC^{(1,w)}$ then implies that $\sigma_{pt}(\sC^{(1,w)})=\sigma(\sC^{(1,w)})\setminus\{0\}$, \cite[Theorem 3.4.23]{ME}. Moreover, Lemma \ref{L1} reveals that also $\sigma_{pt}(\sC^{(1,w)})\su \sigma_{pt}(\sC, \C)=\Sigma$. The two previous facts, together with Theorem  \ref{t:sp-dec-N}(i), imply the validity of the two equalities in (i).

(ii) By Theorem \ref{t:sp-dec-N}(iii) we must have $R_w=\R$. Otherwise, $t_0$ is finite and so \eqref{e:point} implies that $\sigma_{pt}(\sC^{(1,w)})$ is a finite set. This is a contradiction to part (i). So, $R_w=\R$ and hence, $w\in s$; see Proposition \ref{P2-Sp}.
\end{proof}

\begin{remark}\label{R.4n}\rm (i) If $w$ is  a bounded, strictly positive weight such that $\sC^{(1,w)}\in \cL(\ell_1(w))$ and $S_w(1)\not=\emptyset$, then \eqref{e:spd}
implies that $\sC^{(1,w)} \notin \cK (\ell_1(w))$.

(ii) Recall, for $T\in \cL(X)$ with $X$ a Banach space, that $T$ is \textit{power bounded} if
$\sup_{n\in\N}\|T^n\|<\infty  $.
If $w$ is as in part (i) and $0<s_1<1$, then $\frac{1}{s_1}>1$. It follows from \eqref{e:spd} and the spectral mapping theorem that $[0, \frac{1}{s_1^n}]\su \sigma((\sC^{(1,w)})^n)$ for all $n\in\N$. Then the spectral radius inequality implies  that $\frac{1}{s_1^n}\leq \|(\sC^{(1,w)})^n\|$ for all $n\in\N$. Accordingly, $\sC^{(1,w)}$ cannot be power bounded. Since also $\frac{\|(\sC^{(1,w)})^n\|}{n}\geq \frac{(s_1^{-1})^n}{n}$ for $n\in\N$, it follows from the Principle of Uniform Boundedness that $\left\{\frac{(\sC^{(1,w)})^n}{n}\right\}_{n\in\N}$ cannot converge in $\cL(\ell_1(w))$. In particular, $\sC^{(1,w)}$ cannot be mean ergodic; see the discussion prior to Lemma \ref{L.Mean-0} below.
\end{remark}

It is time for some relevant examples.

\begin{example}\label{es.enne}\rm (i) Let $w_\alpha(n)=\frac{1}{n^\alpha}$, $n\in\N$, for any fixed $\alpha>0$. According to Example \ref{esempi-C}(ii) we have  $\sC^{(1,w_\alpha)}\in \cL(\ell_1(w_\alpha))$. It is routine to check that  $S_{w_\alpha}(1)=[\alpha, \infty)$ and hence, $s_1=\alpha>0$. The claim is that
\begin{equation}\label{e.spettro}
\left\{\lambda\in \C\colon \left|\lambda-\frac{1}{2\alpha}\right|\leq \frac{1}{2\alpha}\right\}\cup \Sigma= \sigma(\sC^{(1,w_\alpha)}).
\end{equation}
Indeed, according  to \eqref{e:spd}  we have
\[
\left\{\lambda\in \C\colon \left|\lambda-\frac{1}{2\alpha}\right|\leq \frac{1}{2\alpha}\right\}\cup \Sigma\su \sigma(\sC^{(1,w_\alpha)}).
\]
To establish the reverse inclusion, fix $\lambda\in\C\setminus\Sigma$ such that $\left|\lambda-\frac{1}{2\alpha}\right|> \frac{1}{2\alpha}$. We show that $\lambda\in \rho(\sC^{(1,w_\alpha)})$. To  this effect, set $\beta:={\rm Re}\left(\frac{1}{\lambda}\right)$. Then $\beta<\alpha$, that is, $(\alpha-\beta)>0$.
Lemma \ref{L-stiam2} implies, for every $m\in\N$,  that
\[
\sum_{n=m+1}^\infty\frac{w_\alpha(n)}{n^{1-\beta}}=\sum_{n=m+1}^\infty\frac{1}{n^{1+\alpha-\beta}}\leq \frac{2^{\alpha-\beta}}{(\alpha-\beta)(m+1)^{\alpha-\beta}}
\]
and so
\[
\sup_{m\in\N}\frac{1}{m^\beta w_\alpha(m)}\sum_{n=m+1}^\infty\frac{w_\alpha(n)}{n^{1-\beta}}\leq \sup_{m\in\N}\frac{2^{\alpha-\beta}m^{\alpha-\beta}}{(\alpha-\beta)(m+1)^{\alpha-\beta}}\leq \frac{2^{\alpha-\beta}}{(\alpha-\beta)}.
\]
Hence, Theorem \ref{t:sp-dec-N}(ii) implies that $\lambda\in \sigma(\sC^{(1,w_\alpha)})$, as claimed.

For $0<\alpha<1$ we see that $0<s_1<1$ and so Remark \ref{R.4n}(ii) implies that $\sC^{(1,w_\alpha)}$ is not power bounded.

It is clear from \eqref{e.spettro}, as alluded to in the beginning of this section,  that there is no constant $K>0$ such that
\[
\sigma(\sC^{(1,w)})\su \{\lambda\in\C\colon |\lambda|\leq K\}
\]
for all strictly positive, decreasing weights $w$ satisfying $\sC^{(1,w)}\in \cL(\ell_1(w))$.

It is routine to check that $R_{w_\alpha}=(-\infty, (\alpha-1))$ and so $t_0=(\alpha-1)$. For $0<\alpha\leq 1$ it follows that $(m-1)\not\in R_{w_\alpha}$ for all $m\in\N$, that is,  $\sigma_{pt}(\sC^{(1,w_\alpha)})=\emptyset$; see \eqref{e:point}. This also follows from the fact that $w_\alpha\not\in\ell_1$; see Remark \ref{R.1n} and Remark \ref{R.3n}(iii). For $\alpha>1$ (in which case $w_\alpha\in \ell_1$), it follows from \eqref{e:point} that
\[
\sigma_{pt}(\sC^{(p,w_\alpha)})=\left\{\frac{1}{m}\colon m\in\N,\ 1\leq m<\alpha\right\}.
\]

(ii) Let $w(n):=1$ if $n=2^k$, for $k\in\N$, and $w(n):=\frac{1}{n}$ otherwise. It is shown in Remark \ref{R.VW}(ii) that $\sC^{(1,w)}\in \cL(\ell_1(w))$. The claim is that
\begin{equation}\label{e.W_N}
\sigma(\sC^{(1,w)})=\left\{\lambda\in\C\colon \left|\lambda-\frac{1}{2}\right|\leq \frac{1}{2}\right\}.
\end{equation}
Since $w\not\in\ell_1$, Remark \ref{R.3n}(iii) shows that $\sigma_{pt}(\sC^{(1,w)})=\emptyset$. Let $s>0$. Then $\frac{1}{n^sw(n)}=\frac{1}{2^{ks}}$ if $n=2^k$, for $k\in\N$, and $\frac{1}{n^sw(n)}=\frac{n}{n^s}$ if $2^k<n<2^{k+1}$ for some $k\in\N$. Accordingly, $\sup_{n\in\N}\frac{1}{n^sw(n)}<\infty$ if and only if $s\geq 1$, i.e., $S_w(1)=[1,\infty)$ with $s_1=1$. According to \eqref{e:spd}  we have that $\sigma(\sC^{(1,w)})$ is contained in the right-side of \eqref{e.W_N}. To establish the reverse inclusion, fix $\lambda\in\C\setminus\Sigma$ such that $|\lambda-\frac{1}{2}|> \frac{1}{2}$.   To verify $\lambda\in \rho(\sC^{(1,w)})$,  set $\alpha:={\rm Re}\left(\frac{1}{\lambda}\right)$ and $r:=\frac{1}{2^{1-\alpha}}$.
Then  $1-\alpha>0$ and $r\in (0,1)$.

For $m=2^k$, $k\in\N$, it follows from Lemma \ref{L-stiam2} that
\begin{eqnarray*}
& & \frac{1}{m^\alpha w(m)}\sum_{n=m+1}^\infty\frac{w(n)}{n^{1-\alpha}}=\frac{1}{2^{\alpha k}}\sum_{n=m+1}^\infty\frac{w(n)}{n^{1-\alpha}}\\
& &\leq \frac{1}{2^{\alpha k}}\left(\sum_{n=2^k}^\infty\frac{1}{nn^{1-\alpha}}+\sum_{j=k}^\infty\frac{1}{(2^j)^{1-\alpha}}\right)=\frac{1}{2^{\alpha k}}\left(\sum_{n=2^k}^\infty\frac{1}{n^{1+(1-\alpha)}}+\sum_{j=k}^\infty r^j\right)\\
& &\leq \frac{1}{2^{\alpha k}}\left(\frac{2^{1-\alpha}}{(1-\alpha)2^{k(1-\alpha)}}+\frac{r^k}{(1-r)}\right)=\frac{2^{1-\alpha}}{(1-\alpha)2^{k}}+\frac{1}{(1-r)2^k}\\
& &\leq \frac{2^{1-\alpha}}{(1-\alpha)}+\frac{1}{(1-r)}.
\end{eqnarray*}
On the other hand, if $2^k<m<2^{k+1}$ for some $k\in\N$, then
\begin{eqnarray*}
& & \frac{1}{m^\alpha w(m)}\sum_{n=m+1}^\infty\frac{w(n)}{n^{1-\alpha}}\leq m^{1-\alpha}\sum_{n=2^k+1}^\infty \frac{w(n)}{n^{1-\alpha}}\\
 & & \leq 2^{(1-\alpha)(k+1)} \left(\sum_{n=2^k}^\infty\frac{1}{nn^{1-\alpha}}+\sum_{j=k}^\infty\frac{1}{(2^j)^{1-\alpha}}\right) \\
 & & \leq  2^{(1-\alpha)k}2^{(1-\alpha)}\left(\frac{2^{1-\alpha}}{(1-\alpha)2^{k(1-\alpha)}}+\frac{1}{(1-r)2^{(1-\alpha)k}}\right)\\
& &=\frac{2^{2(1-\alpha)}}{(1-\alpha)}+\frac{2^{1-\alpha}}{(1-r)}  .
\end{eqnarray*}
So, there exists a constant $c$ (depending on $r $ and  $\alpha$) such that
\[
\sup_{m\in\N}\frac{1}{m^\alpha w(m)}\sum_{n=m+1}^\infty\frac{w(n)}{n^{1-\alpha}}\leq c  .
\]
Hence, Theorem \ref{t:sp-dec-N}(ii) implies that $\lambda\in\rho(\sC^{(1,w)})$. So,  the right-side of \eqref{e.W_N} is contained in $\sigma(\sC^{(1,w)})$.
This completes the argument  establishing \eqref{e.W_N}. Finally, \eqref{e.W_N} implies that $\sC^{(1,w)}$ is \textit{not} compact.
\end{example}

Before presenting the next example we record the following simple fact.

\begin{lemma}\label{L.N} There exists a constant $c>0$ such  that
\begin{equation}\label{e.lemN}
c\leq \sum_{j=2^i+1}^{2^{i+1}}\frac{1}{j}\leq 1, \quad  i\in\N.
\end{equation}
\end{lemma}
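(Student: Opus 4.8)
The plan is to exploit the fact that the displayed sum has a fixed, explicitly computable number of summands, and then to bound each summand above and below by a constant multiple of the number of terms. First I would observe that the index $j$ runs over $\{2^i+1, 2^i+2, \ldots, 2^{i+1}\}$, so there are exactly $2^{i+1}-2^i = 2^i$ terms in the sum, a count that is independent of which term-size estimate we later apply.

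Next I would bound each individual term. For $2^i+1 \leq j \leq 2^{i+1}$ we have the elementary inequalities $\frac{1}{2^{i+1}} \leq \frac{1}{j} \leq \frac{1}{2^i+1}$, since $\frac{1}{j}$ is decreasing in $j$ and $j$ attains its extreme values at the endpoints of the summation range. Summing the right-hand inequality over the $2^i$ terms yields
\[
\sum_{j=2^i+1}^{2^{i+1}}\frac{1}{j} \leq \frac{2^i}{2^i+1} < 1,
\]
which gives the upper bound. Summing the left-hand inequality over the same $2^i$ terms gives
\[
\sum_{j=2^i+1}^{2^{i+1}}\frac{1}{j} \geq \frac{2^i}{2^{i+1}} = \frac{1}{2},
\]
so the lower bound holds with the explicit choice $c := \frac{1}{2}$, uniformly in $i\in\N$.

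There is no genuine obstacle here; the statement is a routine term-counting estimate. The only point requiring a small amount of care is getting the count of summands exactly right (it is $2^i$, not $2^i-1$ or $2^{i+1}$) and matching it with the correct endpoint bound in each direction, since an off-by-one slip in either the count or the monotonicity of $\frac{1}{j}$ would spoil the constants. An alternative, equally short route would be to compare the sum with $\int_{2^i}^{2^{i+1}} \frac{dx}{x} = \log 2$ via the integral test, which would yield both bounds simultaneously with constants close to $\log 2$; I would nonetheless prefer the direct term-by-term estimate above, as it avoids any integral comparison and produces the clean constant $c=\frac{1}{2}$.
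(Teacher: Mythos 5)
Your proof is correct, and the upper bound coincides exactly with the paper's argument: each of the $2^i$ summands is at most $\frac{1}{2^i+1}$, giving $\sum_{j=2^i+1}^{2^{i+1}}\frac{1}{j}\leq \frac{2^i}{2^i+1}\leq 1$. For the lower bound, however, you take a genuinely different and simpler route. You bound each summand below by $\frac{1}{2^{i+1}}$ and multiply by the term count $2^i$, obtaining the explicit constant $c=\frac{1}{2}$ valid for every $i\in\N$ in one line. The paper instead uses the integral comparison
\[
\sum_{j=2^i+1}^{2^{i+1}}\frac{1}{j}\geq \int_{2^i+1}^{2^{i+1}+1}\frac{dx}{x}=\log\left(\frac{2^{i+1}+1}{2^i+1}\right),
\]
notes that the right-hand side tends to $\log 2$ as $i\to\infty$, concludes that the sum is at least $\frac{\log 2}{2}$ for all sufficiently large $i$, and from this infers the existence of some $c>0$ (implicitly using that the finitely many remaining sums are positive). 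Your argument buys elementarity and explicitness: no integral, no limiting argument, no separate treatment of small $i$, and a concrete uniform constant. The paper's argument buys asymptotic sharpness: it shows the sums actually approach $\log 2\approx 0.693$, which is better than $\frac{1}{2}$, but this extra precision is never used --- in the only application of the lemma (Fact 3 of Example \ref{ex.nocompact}, showing $\sC^{(1,w)}$ is not compact) any positive constant $c$ suffices. Your side remark about comparing with $\int_{2^i}^{2^{i+1}}\frac{dx}{x}=\log 2$ is also essentially sound, modulo the usual endpoint shifts in the integral test.
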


\begin{proof} Fix $i\in\N$. Then
\[
\sum_{j=2^i+1}^{2^{i+1}}\frac{1}{j}\geq \int_{2^i+1}^{2^{i+1}+1}\frac{dx}{x}=\log\left(\frac{2^{i+1}+1}{2^i+1}\right)
\]
with $\lim_{i\to\infty}\log\left(\frac{2^{i+1}+1}{2^i+1}\right)=\log (2)$. So, there exists $K\in\N$ with $\sum_{j=2^i+1}^{2^{i+1}}\frac{1}{j}\geq \frac{\log (2)}{2}$, for all $i\geq K$, which implies the existence of $c>0$ satisfying the first inequality in \eqref{e.lemN}.
The second inequality in \eqref{e.lemN} follows from
\[
\sum_{j=2^i+1}^{2^{i+1}}\frac{1}{j}\leq \sum_{j=2^i+1}^{2^{i+1}}\frac{1}{2^{i}+1}=\frac{2^i}{2^i+1}\leq 1,\quad  i\in\N.
\]
\end{proof}

Proposition \ref{P_SR} states if $\sC^{(1,w)}\in \cK(\ell_1(w))$, then necessarily $w\in s$. The following example shows that the converse is false.

\begin{example}\label{ex.nocompact}\rm There exists a strictly positive, decreasing weight $w\in s$ such that $\sC^{(1,w)}\in \cL(\ell_1(w))$, its spectra are given by
\[
\sigma(\sC^{(1,w)})=\Sigma_0 \ \ \mbox{and}\ \ \sigma_{pt}(\sC^{(1,w)}))=\Sigma,
\]
but $\sC^{(1,w)}$ fails to be compact.

Define the decreasing sequence $w=(w(n))_{n\in\N}$ by $w(1)=w(2)=1$ and
\[
w(n):=\frac{1}{2^i2^{(i+1)2^{i+1}}}, \quad \mbox{for } 2^i+1\leq n\leq 2^{i+1}\mbox{ and } i\in\N.
\]

\textit{Fact 1. The weight $w\in s$}.

Since the sequence $(\frac{1}{n^{n+1}})_{n\in\N}$ clearly belongs to $s$  and $w(n)\leq \frac{8}{n^{n+1}}$ for $n\in\N$ (see Example \ref{E.Comparison}(ii)), it follows that also $w\in s$.


\textit{Fact 2. The operator  $\sC^{(1,w)}\in \cL(\ell_1(w))$}.

Fix $m\in\N$ with $m\geq 3$. Now choose $i\in\N$ such that $2^i+1\leq m\leq 2^{i+1}$. Using the fact, for each $k\in\N$, that $\frac{1}{n}\leq \frac{1}{2^k+1}$ whenever $2^k+1\leq n\leq 2^{k+1}$, that each sum of the form $\sum_{n=2^k+1}^{2^{k+1}}(\ldots)$ has $2^k$ terms, and that $\frac{2^k}{2^k+1}\leq 1$, it follows that
\begin{eqnarray*}
\sum_{n=m}^\infty\frac{w(n)}{n}&\leq & \sum_{n=2^i+1}^\infty\frac{w(n)}{n}=\sum_{k=i}^\infty w(2^k+1)\sum_{n=2^k+1}^{2^{k+1}}\frac{1}{n}\\
&\leq & \sum_{k=i}^\infty w(2^k+1)\frac{2^k}{2^k+1}\leq \sum_{k=i}^\infty w(2^k+1).
\end{eqnarray*}
Due to the definition of $w(2^k+1)$ for $k\geq i$ we can conclude that
\begin{eqnarray*}
\sum_{n=m}^\infty\frac{w(n)}{n}&\leq & \sum_{k=i}^\infty\frac{1}{2^k2^{(k+1)2^{k+1}}}<\sum_{k=i}^\infty\frac{1}{2^k2^{(i+1)2^{i+1}}}\\
&=& \frac{1}{2^{(i+1)2^{i+1}}}\sum_{k=i}^\infty\frac{1}{2^k}=\frac{1}{2^{(i+1)2^{i+1}}}\cdot \frac{1}{2^{i-1}}.
\end{eqnarray*}
Since $\frac{1}{w(m)}=\frac{1}{w(2^i+1)}=2^i2^{(i+1)2^{i+1}}$, the previous inequality implies that
\[
\frac{1}{w(m)}\sum_{n=m}^\infty\frac{w(n)}{n}\leq 2.
\]
Accordingly, $\sup_{m\geq 3}\frac{1}{w(m)}\sum_{n=m}^\infty\frac{w(n)}{n}\leq 2$. Moreover, both $\frac{1}{w(1)}\sum_{n=1}^\infty\frac{w(n)}{n}\leq \sum_{n=1}^\infty w(n)<\infty$ and $\frac{1}{w(2)}\sum_{n=2}^\infty\frac{w(n)}{n}\leq \sum_{n=1}^\infty w(n)<\infty$. So, by \eqref{e.Co-C-1} of Proposition \ref{P1-C}(i) we can conclude that $\sC^{(1,w)}\in\cL(\ell_1(w))$.

\textit{Fact 3. The operator $\sC^{(1,w)}$ is not compact.}

By \eqref{e.Comp} of Proposition \ref{P1-C}(ii) we need to verify that $\left(\frac{1}{w(m)}\sum_{n=m}^\infty\frac{w(n)}{n}\right)_{m\in\N}$ does \textit{not} converge to $0$. Fix $i\in\N$. Then, for $m:=2^i+1$, we have
\[
\frac{1}{w(2^i+1)}\sum_{n=2^i+1}^\infty\frac{w(n)}{n}\geq \frac{1}{w(2^i+1)}\sum_{n=2^i+1}^{2^{i+1}}\frac{w(n)}{n}=\sum_{n=2^i+1}^{2^{i+1}}\frac{1}{n}\geq c
\]
with $c>0$ as in Lemma \ref{L.N}. Since $\left(\frac{1}{w(2^i+1)}\sum_{n=2^i+1}^\infty\frac{w(n)}{n}\right)_{i\in\N}$ is a subsequence of $\left(\frac{1}{w(m)}\sum_{n=m}^\infty\frac{w(n)}{n}\right)_{m\in\N}$, we are done.

\textit{Fact 4. The spectra are given by $\sigma(\sC^{(1,w)})=\Sigma_0$ and $\sigma_{pt}(\sC^{(1,w)})=\Sigma$.}

According to Fact 1 above we have $R_w=\R$ (see Proposition \ref{P2-Sp}) and so Theorem \ref{t:sp-dec-N}(iii) implies that $\sigma_{pt}(\sC^{(1,w)})=\Sigma$.

To verify that $\sigma(\sC^{(1,w)})=\Sigma_0$ we need to show that every $\lambda\not\in \Sigma_0$ belongs to $\rho(\sC^{(1,w)})$. This is achieved by considering the two possible cases. Namely, when $|\lambda-\frac{1}{2}|\leq \frac{1}{2}$ (equivalent to $\alpha:={\rm Re}\left(\frac{1}{\lambda}\right)$ satisfying $\alpha\geq 1$) and when $|\lambda-\frac{1}{2}|> \frac{1}{2}$ (equivalent to $\alpha<1$).

Case (1). Let $\alpha \geq 1$ (i.e., $(\alpha-1)\geq 0$). Then $\lambda\in \rho(\sC^{(1,w)})$.

Because $w\in s$ it is clear that $\frac{1}{1^\alpha w(1)}\sum_{n=2}^\infty \frac{w(n)}{n^{(1-\alpha)}}=\sum_{n=2}^\infty n^{\alpha-1}w(n)<\infty$ and also that $\frac{1}{2^\alpha w(2)}\sum_{n=3}^\infty \frac{w(n)}{n^{(1-\alpha)}}=\frac{1}{2^\alpha}\sum_{n=3}^\infty n^{\alpha-1}w(n)<\infty$. So, fix $m\in\N$ with $m\geq 3$. Now select $i\in\N$ with $2^i+1\leq m\leq 2^{i+1}$ in which case
\[
\sum_{n=m+1}^\infty \frac{w(n)}{n^{(1-\alpha)}}\leq \sum_{n=2^i+1}^\infty n^{\alpha-1}w(n)=\sum_{k=i}^\infty w(2^k+1)\sum_{n=2^k+1}^{2^{k+1}}n^{\alpha-1}.
\]
Since $n^{\alpha-1}\leq (2^{k+1})^{\alpha-1}$ for $2^k+1\leq n\leq 2^{k+1}$, with  $2^k$ terms, we have
\begin{eqnarray*}
& & w(2^k+1)\sum_{n=2^k+1}^{2^{k+1}}n^{\alpha-1} \leq  w(2^k+1)2^k\cdot (2^{k+1})^{\alpha-1}=\frac{1}{2^k2^{(k+1)2^{k+1}}}\cdot 2^k\cdot (2^{k+1})^{\alpha-1}\\
& &\ \ =\frac{1}{2^{(k+1)2^{k+1}}}\cdot\left(\frac{2^{\alpha-1}}{2^\alpha}\right)^{k+1}=\frac{1}{2^{(k+1)2^{k+1}}}\cdot\left(\frac{1}{2}\right)^{k+1}.
\end{eqnarray*}
It follows that
\[
\sum_{n=m+1}^\infty \frac{w(n)}{n^{(1-\alpha)}}\leq \sum_{k=i}^\infty \frac{1}{2^{(k+1)(2^{k+1}-\alpha)}}\cdot\left(\frac{1}{2}\right)^{k+1}.
\]
But, for all $k\geq i$ we have $\frac{1}{2^{(k+1)(2^{k+1}-\alpha)}}\leq \frac{1}{2^{(i+1)(2^{i+1}-\alpha)}}$ and so
\begin{equation}\label{e.copno}
\sum_{n=m+1}^\infty \frac{w(n)}{n^{(1-\alpha)}}\leq \frac{1}{2^{(i+1)(2^{i+1}-\alpha)}}\sum_{k=i}^\infty \left(\frac{1}{2}\right)^{k+1}=\frac{2^{(i+1)\alpha}}{2^i\cdot 2^{(i+1)2^{i+1}}}.
\end{equation}
Using $\frac{1}{w(m)}=2^i\cdot 2^{(i+1)2^{i+1}}$ and  $\frac{1}{m^\alpha}\leq \frac{1}{(2^i+1)^\alpha}$ it follows from \eqref{e.copno} that
\begin{eqnarray*}
& & \frac{1}{m^\alpha w(m)}\sum_{n=m+1}^\infty \frac{w(n)}{n^{(1-\alpha)}}\leq \frac{1}{(2^i+1)^\alpha}\cdot 2^i 2^{(i+1)2^{i+1}}\cdot \frac{2^{(i+1)\alpha}}{2^i\cdot 2^{(i+1)2^{i+1}}}\\
& & \quad =2^{\alpha}\left(\frac{2^i}{2^i+1}\right)^\alpha\leq 2^{\alpha}<\infty.
\end{eqnarray*}
Hence, the condition \eqref{e.spe_car} in Theorem \ref{t:sp-dec-N}(ii) is satisfied, i.e., $\lambda\in\rho(\sC^{(1,w)})$.

Case (2). Let $\alpha<1$. Then  $\lambda\in\rho(\sC^{(1,w)})$.

Because $w\in s\su \ell_1$ it is clear that $\frac{1}{1^\alpha w(1)}\sum_{n=2}^\infty \frac{w(n)}{n^{(1-\alpha)}}\leq \sum_{n=2}^\infty w(n)<\infty$ and also that $\frac{1}{2^\alpha w(2)}\sum_{n=3}^\infty \frac{w(n)}{n^{(1-\alpha)}}\leq \frac{1}{2^\alpha}\sum_{n=3}^\infty w(n)<\infty$. So, again fix $m\in\N$ with $m\geq 3$ and  select $i\in\N$ with $2^i+1\leq m\leq 2^{i+1}$. As in Case (1),
\[
\sum_{n=m+1}^\infty \frac{w(n)}{n^{(1-\alpha)}}\leq \sum_{k=i}^\infty w(2^k+1)\sum_{n=2^k+1}^{2^{k+1}}\frac{1}{n^{1-\alpha}}.
\]
Since $(1-\alpha)>0$ and $\frac{1}{n^{1-\alpha}}\leq \frac{1}{(2^k+1)^{1-\alpha}}$ for $2^k+1\leq n\leq 2^{k+1}$ it follows that
\begin{eqnarray*}
& & \sum_{n=m+1}^\infty \frac{w(n)}{n^{(1-\alpha)}}\leq \sum_{k=i}^\infty w(2^k+1)\frac{2^k}{(2^k+1)^{1-\alpha}}\leq \sum_{k=i}^\infty w(2^k+1)\frac{2^k}{(2^k)^{1-\alpha}}\\
& & \ \ = \sum_{k=i}^\infty \frac{1}{2^k\cdot 2^{(k+1)2^{k+1}}}\cdot 2^{k\alpha}\leq \frac{1}{2^{(i+1)2^{i+1}}}\sum_{k=i}^\infty \left(\frac{1}{2^{1-\alpha}}\right)^k,
\end{eqnarray*}
where the last inequality uses the fact that $\frac{1}{2^{(k+1)2^{k+1}}}\leq \frac{1}{2^{(i+1)2^{i+1}}}$ for all $k\geq i$. Since, with $A:=1/(1-2^{\alpha-1})$, we have
\[
\frac{1}{2^{(i+1)2^{i+1}}}\sum_{k=i}^\infty \left(\frac{1}{2^{1-\alpha}}\right)^k=\frac{A}{2^{(i+1)2^{i+1}}}\cdot\frac{1}{2^{(1-\alpha)i}},
\]
we can conclude that
\begin{equation}\label{e.comnoo}
\sum_{n=m+1}^\infty \frac{w(n)}{n^{(1-\alpha)}}\leq\frac{A}{2^{(i+1)2^{i+1}}}\cdot\frac{1}{2^{(1-\alpha)i}}.
\end{equation}
Using $\frac{1}{w(m)}=2^i\cdot 2^{(i+1)2^{i+1}}$ and the inequality
\[
\frac{1}{m^\alpha}=m^{1-\alpha}\cdot\frac{1}{m}\leq (2^{i+1})^{1-\alpha}\cdot \frac{1}{(2^i+1)},
\]
it follows from \eqref{e.comnoo} and the inequality $\frac{2^i}{2^i+1}<1$ that
\[
\frac{1}{m^\alpha w(m)}\sum_{n=m+1}^\infty \frac{w(n)}{n^{(1-\alpha)}}\leq A\cdot \frac{2^i}{(2^i+1)}\cdot \frac{(2^{i+1})^{1-\alpha}}{2^{(1-\alpha)i}}\leq A\,2^{1-\alpha}<\infty.
\]
Again  \eqref{e.spe_car} in Theorem \ref{t:sp-dec-N}(ii) holds, i.e., $\lambda\in\rho(\sC^{(1,w)})$. The proof of Fact 4 and hence, the discussion of this example, is thereby complete.
\end{example}

To formulate the final result of this section we require some preliminaries. Let $w\in c_0$ be a decreasing, strictly positive  sequence. Then, with
a continuous inclusion, we have
\begin{equation}\label{e.inclusione}
\ell_p(w)\su c_0(w), \quad 1\leq p<\infty.
\end{equation}
For $p=1$ this is clear.
 Fix $1<p<\infty$. For $x\in \ell_p(w)$ we have
\[
|x_n|w(n)^{1/p}=(|x_n|^pw(n))^{1/p}\leq \left(\sum_{m=1}^\infty |x_m|^pw(m)\right)^{1/p}=\|x\|_{p,w}, \quad   n\in\N.
\]
Accordingly,
\[
0\leq |x_n|w(n)=|x_n|w(n)^{1/p}w(n)^{1/p'}\leq w(n)^{1/p'}\|x\|_{p,w}.
\]
Since $w\downarrow 0$, it follows that $\lim_{n\to\infty}|x_n|w(n)=0$, i.e., $x\in c_0(w)$ and
\[
\|x\|_{0,w}\leq \|w\|_\infty^{1/p'} \|x\|_{p,w},\quad x\in \ell_p(w).
\]
For the case   $w\in \ell_1$ with $w\downarrow 0$ we have, with a  continuous inclusion, that
\begin{equation}\label{e.in-p}
\ell_p(w)\su \ell_1(w),\quad 1<p<\infty.
\end{equation}
Indeed, define $\mu\colon 2^\N\to [0,\infty)$ by $\mu(A):=\sum_{n\in A}w(n)$, for $A\su \N$. Then $\mu$ is a \textit{finite}, positive measure  and  it is well known that $L^p(\mu)\su L^1(\mu)$, for $1<p<\infty$, with $\|f\|_1\leq \mu(\N)^{1/p'}\|f\|_p$. Accordingly,
\[
\|x\|_{1,w}\leq \left(\sum_{n=1}^\infty w(n)\right)^{1/p'}\|x\|_{p,w},\quad x\in \ell_p(w).
\]
The containment \eqref{e.in-p} does not always hold. Indeed,
let $w(n)=\frac{1}{\sqrt{n}}$, for $n\in\N$. Fix  $p\in (1,\infty)$. Then $x:=(\frac{1}{n^\alpha})_{n\in\N}$,   for any  fixed  $\alpha\in (\frac{1}{2p},\frac{1}{2}]$ satisfies  $x\in \ell_p(w)$ but $x\not \in \ell_1(w)$. Accordingly, $\ell_p(w)\not\su \ell_1(w)$ for all $1<p<\infty$.

\begin{prop}\label{C.1} Let $w\in c_0$ be  decreasing and  strictly positive. Then
\begin{equation}\label{e.inclspettro}
\cup_{1<p<\infty}\sigma_{pt}(\sC^{(p,w)})\su \sigma_{pt}(\sC^{(0,w)})\su \sigma_{pt}(\sC, \C^\N)=\Sigma.
\end{equation}
Suppose, in addition, that  $w\in \ell_1$ and $\sC^{(1,w)}\in \cL(\ell_1(w))$. Then
\begin{equation}\label{e.inclspettro1}
\cup_{1<p<\infty}\sigma_{pt}(\sC^{(p,w)})\su \sigma_{pt}(\sC^{(1,w)})\su\sigma_{pt}(\sC^{(0,w)})\su \Sigma.
\end{equation}
\end{prop}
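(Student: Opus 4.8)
The plan is to reduce both chains to a single observation drawn from Lemma \ref{L1}. That lemma tells us that $\sigma_{pt}(\sC,\C^\N)=\Sigma$ and that, for each $m\in\N$, the eigenspace $\Ker\left(\frac{1}{m}I-\sC\right)$ in $\C^\N$ is the one-dimensional space $\mathrm{span}\{x^{(m)}\}$. Each of the weighted spaces $\ell_p(w)$, $c_0(w)$, $\ell_1(w)$ sits inside $\C^\N$ with continuous inclusion for coordinatewise convergence, and on each of them $\sC$ is simply the restriction of the $\C^\N$-operator. The key fact I would record first is: whenever $\sC$ acts in such a space $X$, then $\sigma_{pt}(\sC|_X)\su\sigma_{pt}(\sC,\C^\N)=\Sigma$ (an eigenvector in $X$ is an eigenvector in $\C^\N$), and moreover, for every $m\in\N$,
\[
\frac{1}{m}\in\sigma_{pt}(\sC|_X)\quad\Longleftrightarrow\quad x^{(m)}\in X .
\]
Indeed, any eigenvector for $\frac{1}{m}$ lies in $\Ker\left(\frac{1}{m}I-\sC\right)=\mathrm{span}\{x^{(m)}\}$, hence is a non-zero scalar multiple of $x^{(m)}$, which forces $x^{(m)}\in X$; the converse is clear since $x^{(m)}\neq 0$ and $\sC x^{(m)}=\frac{1}{m}x^{(m)}$.

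Granting this equivalence, both displays follow at once from the space inclusions established just before the proposition. For \eqref{e.inclspettro}, the rightmost equality and inclusion $\sigma_{pt}(\sC^{(0,w)})\su\sigma_{pt}(\sC,\C^\N)=\Sigma$ are the key fact applied to $c_0(w)$; for the leftmost inclusion, fix $1<p<\infty$ and $\frac{1}{m}\in\sigma_{pt}(\sC^{(p,w)})$, so $x^{(m)}\in\ell_p(w)\su c_0(w)$ by \eqref{e.inclusione}, whence $\frac{1}{m}\in\sigma_{pt}(\sC^{(0,w)})$; taking the union over $p$ yields the claim. For \eqref{e.inclspettro1} the argument is identical but uses $w\in\ell_1$: if $\frac{1}{m}\in\sigma_{pt}(\sC^{(p,w)})$ with $1<p<\infty$, then $x^{(m)}\in\ell_p(w)\su\ell_1(w)$ by \eqref{e.in-p}, giving $\frac{1}{m}\in\sigma_{pt}(\sC^{(1,w)})$; and if $\frac{1}{m}\in\sigma_{pt}(\sC^{(1,w)})$, then $x^{(m)}\in\ell_1(w)\su c_0(w)$ by \eqref{e.inclusione} with $p=1$ (here $w\in\ell_1\su c_0$), so $\frac{1}{m}\in\sigma_{pt}(\sC^{(0,w)})$. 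The final inclusion $\sigma_{pt}(\sC^{(0,w)})\su\Sigma$ is once more the key fact.

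There is no genuine analytic obstacle here; the only points needing care are book-keeping. First, all three point spectra must be defined, i.e.\ $\sC$ must act in each space: for $1<p<\infty$ with $w$ decreasing this is the continuity statement recalled before Examples \ref{esempi-C}, for $\ell_1(w)$ it is the standing hypothesis $\sC^{(1,w)}\in\cL(\ell_1(w))$, and for $c_0(w)$ one uses that $\sC$ acts in $c_0(w)$ for decreasing $w\in c_0$. Second, the inclusions \eqref{e.inclusione} and \eqref{e.in-p} require $w$ decreasing (and $w\in\ell_1$ for the latter), which are precisely the hypotheses in force; note that $w\in\ell_1$ secures both $w\in c_0$ and the validity of \eqref{e.in-p}. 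Thus no estimate beyond those already in the excerpt is needed, and the whole proof is a transport of the known space inclusions through the one-dimensional eigenvector description of Lemma \ref{L1}.
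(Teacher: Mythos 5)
Your proof is correct, and it is exactly the ``elementary'' argument the paper has in mind when it omits the proof: the one-dimensional eigenspace description of Lemma \ref{L1} shows that $\frac{1}{m}$ is an eigenvalue of $\sC$ restricted to any invariant sequence space $X\su\C^\N$ precisely when $x^{(m)}\in X$, and then the inclusions \eqref{e.inclusione} and \eqref{e.in-p}, established immediately before the proposition, transport eigenvalues along $\ell_p(w)\su\ell_1(w)\su c_0(w)$. Your book-keeping (that $\sC$ acts in each space under the stated hypotheses, via \cite[Proposition 2.2]{ABR}, the standing assumption on $\sC^{(1,w)}$, and the weighted-$c_0$ result of \cite{ABR-9}) is also the right set of prerequisites, so nothing is missing.
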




The proof of the previous result is elementary and is therefore omitted.

\section{Iterates of $\sC^{(1,w)}$ and mean ergodicity}

For $X$ a Banach space, recall that  $T\in\cL(X)$ is   \textit{mean ergodic} (respectively, \textit{uniformly mean ergodic}) if its sequence of Ces\`aro averages
\begin{equation}\label{eq.cesaro-aver}
T_{[n]}:=\frac{1}{n}\sum_{m=1}^n T^m, \quad n\in\N,
\end{equation}
 converges to some operator $P\in \cL(X)$ in the strong operator topology $\tau_s$, i.e., $\lim_{n\to\infty}T_{[n]}x=Px$ for each $x\in X$, \cite[Ch.VIII]{DSI} (respectively, in the  operator norm  topology $\tau_b$). According to \cite[VIII Corollary 5.2]{DSI} there  then exists  the direct decomposition
\begin{equation}\label{eq.decompo}
X=\Ker (I-T)\oplus \ov{(I-T)(X)}.
\end{equation}
Moreover, we always have the identities
\begin{equation}\label{eq.ba}
    (I-T) T_{[n]} = T_{[n]}  (I-T) = \frac 1 n (T-T^{n+1}), \qquad n \in \N,
\end{equation}
and, setting  $T_{[0]} : = I$,  that
\begin{equation}\label{eq.bb}
    \frac 1 n T^n = T_{[n]} - \frac{(n-1)}{n} T_{[n-1]} , \qquad n \in \N .
\end{equation}

An operator $T\in \cL(X)$ is  \textit{Ces\`aro bounded} if   $\sup_{n\in\N}\|T_{[n]}\|<\infty$.
Every mean ergodic operator $T\in \cL(X)$ is necessarily Ces\`aro  bounded (by the Principle of Uniform Boundedness) and, via \eqref{eq.bb}, also satisfies
\begin{equation}\label{e.T_limite}
 \tau_s-\lim_{n\to\infty}\frac{1}{n}T^n=0.
\end{equation}
It is also clear from \eqref{eq.bb} that if $T$  is Ces\`aro bounded, then $\sup_{n\in\N}\frac{\|T^n\|}{n}<\infty$.
If $T\in \cL(X)$ is \textit{power bounded} (cf. Remark \ref{R.4n}(ii)), then  $T$ is also Ces\`aro  bounded and   $\lim_{n\to\infty}\frac{\|T^n\|}{n}=0$.
Condition  \eqref{e.T_limite} implies that  $\sigma(T)\su \ov{\mathbb{D}}$, \cite[p.709, Lemma 1]{DSI}, where $\mathbb{D}:=\{\lambda\in\C\colon |\lambda|<1\}$.

To characterize  the mean ergodicity of $\sC^{(1,w)}$ we require some preliminary facts.

\begin{lemma}\label{L.Mean-0} Let $w$ be   a bounded, strictly positive sequence such that  $\sC^{(1,w)}\in \cL(\ell_1(w))$. The following properties are satisfied.
\begin{itemize}
\item[\rm (i)] Each basis vector $e_r\in (I-\sC^{(1,w)})(\ell_1(w))$ for  $r\geq 2$.
\item[\rm (ii)] We have the equalities
\begin{equation}\label{eq.range-mean}
\ov{(I-\sC^{(1,w)})(\ell_1(w))}=\{x\in \ell_1(w)\colon x_1=0\}=\ov{{\rm span}\{e_r\colon r\geq 2\}}.
\end{equation}
\item[\rm (iii)]  The range of $I-\sC^{(1,w)}$  is closed  if and only if it coincides with
$$
\{x\in \ell_1(w)\colon x_1=0\}  .
$$
\item[\rm (iv)] The following three conditions are equivalent.
\begin{itemize}
\item[\rm (a)] $\Ker (I-\sC^{(1,w)})\not=\{0\}$.
\item[\rm (b)] $\Ker (I-\sC^{(1,w)})={\rm span}\{\mathbf{1}\}$.
\item[\rm (c)]  $\mathbf{1}\in \ell_1(w)$, that is, $w\in \ell_1$.
\end{itemize}
If $\mathbf{1}\not\in \ell_1(w)$, then  $\Ker (I-\sC^{(1,w)})=\{0\}$.
\end{itemize}
\end{lemma}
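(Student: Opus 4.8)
The plan is to isolate part (i) as the one step carrying real content, and then to deduce (ii)--(iv) from it together with the continuity of the coordinate functionals on $\ell_1(w)$ and the description of the fixed space of $\sC$ on $\C^\N$ given in Lemma \ref{L1}. For part (i), fix $r\geq 2$; I would solve $(I-\sC^{(1,w)})x=e_r$ explicitly. Writing $S_n:=\sum_{k=1}^n x_k$, the $n$-th equation $x_n-\frac{1}{n}S_n=(e_r)_n$ becomes $\frac{n-1}{n}S_n=S_{n-1}+(e_r)_n$ for $n\geq 2$, while the first equation reads $0=0$. The solution is determined only up to a multiple of the fixed vector $\mathbf{1}$, so the issue is to pick the branch whose tail vanishes. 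A direct verification shows that the \emph{finitely supported} vector
\[
x:=e_r-\frac{1}{r-1}\sum_{k=1}^{r-1}e_k
\]
works: its partial sums are $S_n=-\frac{n}{r-1}$ for $n\leq r-1$ and $S_n=0$ for $n\geq r$, so $(\sC^{(1,w)}x)_n=-\frac{1}{r-1}$ for $n\leq r-1$ and $(\sC^{(1,w)}x)_n=0$ for $n\geq r$, whence $(I-\sC^{(1,w)})x=e_r$. As $x$ has finite support it lies in $\ell_1(w)$, proving (i). Finding this preimage is the only genuine obstacle; the recursion for $S_n$ dictates the normalisation, and the cleanest route is to write the candidate down and check it.

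For part (ii), note first that the functional $x\mapsto x_1$ is continuous on $\ell_1(w)$ (since $|x_1|\leq w(1)^{-1}\|x\|_{1,w}$), so $H:=\{x\in\ell_1(w)\colon x_1=0\}$ is a closed hyperplane. Because $(\sC^{(1,w)}x)_1=x_1$, every element of the range of $I-\sC^{(1,w)}$ has vanishing first coordinate, giving $\ov{(I-\sC^{(1,w)})(\ell_1(w))}\su H$. Conversely, part (i) places ${\rm span}\{e_r\colon r\geq 2\}$ inside the range, and since $(e_k)_{k\in\N}$ is an unconditional basis of $\ell_1(w)$ this span is dense in $H$; hence $H\su \ov{(I-\sC^{(1,w)})(\ell_1(w))}$. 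This yields both equalities in \eqref{eq.range-mean}. Part (iii) is then immediate: a closed range must equal its closure $H$, while $H$ itself is closed, so the range is closed precisely when it equals $H$.

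For part (iv), I would appeal to Lemma \ref{L1}: the fixed space $\Ker(I-\sC,\C^\N)$ is the one-dimensional eigenspace of the eigenvalue $1$, namely ${\rm span}\{\mathbf{1}\}$ (cf.\ Remark \ref{R.1n}). Intersecting with $\ell_1(w)$ gives $\Ker(I-\sC^{(1,w)})={\rm span}\{\mathbf{1}\}\cap \ell_1(w)$. If $\mathbf{1}\in\ell_1(w)$ this equals ${\rm span}\{\mathbf{1}\}$, which is (b) and hence forces (a); if $\mathbf{1}\notin\ell_1(w)$ then no nonzero multiple of $\mathbf{1}$ belongs to $\ell_1(w)$, so the intersection is $\{0\}$. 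This simultaneously establishes the equivalence of (a)--(c) (through (c)$\Rightarrow$(b)$\Rightarrow$(a)$\Rightarrow$(c)) and the final assertion that $\mathbf{1}\notin\ell_1(w)$ forces $\Ker(I-\sC^{(1,w)})=\{0\}$.
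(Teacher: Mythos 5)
Your proposal is correct and follows essentially the same route as the paper: your explicit preimage $x=e_r-\frac{1}{r-1}\sum_{k=1}^{r-1}e_k$ is exactly the paper's identity $e_{r+1}=(I-\sC^{(1,w)})\bigl(e_{r+1}-\frac{1}{r}\sum_{k=1}^{r}e_k\bigr)$ after an index shift, and parts (ii)--(iv) are deduced just as in the paper (closed hyperplane $\{x_1=0\}$, density of ${\rm span}\{e_r\colon r\geq 2\}$, and intersecting $\Ker(I-\sC)={\rm span}\{\mathbf{1}\}$ in $\C^\N$ with $\ell_1(w)$). No gaps.
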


\begin{proof} (i) This follows from the identities
\[
e_{r+1}=(I-\sC^{(1,w)})(e_{r+1}-\frac{1}{r}\sum_{k=1}^re_k),\quad r\in\N,
\]
which can be verified by direct calculation.

(ii) Clearly,
$\{x\in \ell_1(w)\colon x_1=0\}=\ov{{\rm span}\{e_r\colon r\geq 2\}}$.
Part (i) implies
\[
\{x\in \ell_1(w)\colon x_1=0\}\su \ov{(I-\sC^{(1,w)})(\ell_1(w))}.
\]
On the other hand, since the $1$-st coordinate of $\sC^{(1,w)}x$ is $x_1$ for all $x\in \ell_1(w)$, we see that
\[
(I-\sC^{(1,w)})(\ell_1(w))\su\{x\in \ell_1(w)\colon x_1=0\}.
\]
The previous two  containments imply  \eqref{eq.range-mean}.

(iii) This is a direct consequence of part (ii) and the fact that the subspace  $\{x\in \ell_1(w)\colon x_1=0\}$  of $\ell_1(w)$  is closed.

(iv) The Ces\`aro operator $\sC\colon \C^\N\to \C^\N$ satisfies $\Ker (I-\sC)={\rm span}\{\mathbf{1}\}$.
Hence, $\Ker (I-\sC^{(1,w)})={\rm span}\{\mathbf{1}\}$ if and only if $\mathbf{1}\in \ell_1(w)$.

If $\mathbf{1}\not \in \ell_1(w)$, then  $ (I-\sC^{(1,w)})$ is injective, i.e., $\Ker (I-\sC^{(1,w)})=\{0\}$.
\end{proof}

\begin{lemma}\label{L-Mean1} Let   $w$ be   a bounded, strictly positive sequence such that  $\sC^{(1,w)}\in \cL(\ell_1(w))$. If $\sC^{(1,w)}$ is Ces\`aro bounded, then necessarily $w\in \ell_1$. In particular, this is the case whenever $\sC^{(1,w)}$ is power bounded or mean ergodic.
\end{lemma}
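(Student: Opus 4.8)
The plan is to test Cesàro boundedness against the single basis vector $e_1$, using the elementary fact that the iterates $\sC^je_1$ increase coordinatewise to the constant vector $\mathbf 1$, and then to combine this with Fatou's lemma to force $\sum_n w(n)<\infty$. First I would record the coordinates $a^{(j)}_m:=(\sC^je_1)_m$. Directly from \eqref{eq.Ce-op} one has $a^{(1)}_m=\frac1m$ and the recursion $a^{(j+1)}_m=\frac1m\sum_{k=1}^m a^{(j)}_k$ for all $m,j\in\N$. A first induction on $m$ shows that, for each fixed $j$, the sequence $(a^{(j)}_m)_{m\in\N}$ is decreasing (the base case $\frac1m$ is decreasing, and the Cesàro average of a decreasing sequence is again decreasing). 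Consequently $a^{(j+1)}_m$ is the average of the decreasing block $a^{(j)}_1,\dots,a^{(j)}_m$, hence $a^{(j+1)}_m\ge a^{(j)}_m$, so $j\mapsto a^{(j)}_m$ is nondecreasing; a second induction gives $0\le a^{(j)}_m\le 1$ for all $m,j$.

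It follows that $a^{(j)}_m\uparrow L_m\le 1$ as $j\to\infty$, and passing to the limit in the recursion yields $(m-1)L_m=\sum_{k=1}^{m-1}L_k$ with $L_1=\lim_j a^{(j)}_1=1$. An obvious induction then forces $L_m=1$ for every $m$, i.e. $\sC^je_1\to\mathbf 1$ coordinatewise. In particular the Cesàro averages satisfy $(\sC_{[n]}e_1)_m\to 1$ as $n\to\infty$ for each fixed $m$, where $\sC_{[n]}$ denotes the averages in \eqref{eq.cesaro-aver} for $T=\sC^{(1,w)}$. Now, assuming $\sC^{(1,w)}$ is Cesàro bounded, set $M:=\sup_{n\in\N}\|\sC_{[n]}\|<\infty$. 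Since $\|e_1\|_{1,w}=w(1)$, we have $\|\sC_{[n]}e_1\|_{1,w}\le Mw(1)$ for all $n$. Because the coordinates $(\sC_{[n]}e_1)_m\ge 0$ converge to $1$, Fatou's lemma applied to the weighted series $\sum_m w(m)(\cdot)_m$ gives
\[
\sum_{m=1}^\infty w(m)=\sum_{m=1}^\infty w(m)\lim_{n\to\infty}(\sC_{[n]}e_1)_m\le \liminf_{n\to\infty}\|\sC_{[n]}e_1\|_{1,w}\le Mw(1)<\infty,
\]
that is, $w\in\ell_1$. Finally, power boundedness trivially implies Cesàro boundedness, and mean ergodicity implies it by the Uniform Boundedness Principle (as noted prior to this lemma), so the two ``in particular'' cases follow at once.

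The only genuinely nontrivial point—and hence the main obstacle—is the coordinatewise convergence $\sC^je_1\to\mathbf 1$; once the monotonicity and the uniform bound $0\le a^{(j)}_m\le1$ are in place, the identification of the increasing limit as the fixed point $\mathbf 1$ is forced by the recursion, and the remainder is a routine Fatou argument. I would take care only to phrase the convergence of the Cesàro averages correctly, since the hypothesis controls the averages $\sC_{[n]}$ rather than the individual iterates $\sC^j$, which is exactly why testing with $\sC_{[n]}e_1$ (and not merely $\sC^je_1$) is the right move.
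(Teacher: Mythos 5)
Your proof is correct, and it differs from the paper's in both of its key steps, even though the overall skeleton is the same (test Ces\`aro boundedness against $e_1$, show $\sC_{[n]}e_1\to\mathbf{1}$ coordinatewise, and conclude $\mathbf{1}\in\ell_1(w)$). For the coordinatewise convergence, the paper does not compute anything: it invokes the known facts that $\sC\in\cL(\C^\N)$ is power bounded and mean ergodic with $(I-\sC)(\C^\N)=\{x\colon x_1=0\}$, writes $e_1=\mathbf{1}-(0,1,1,\dots)$, and applies the mean ergodic theorem on the Fr\'echet space $\C^\N$; you instead prove, by an elementary double induction, that the iterates $a^{(j)}_m=(\sC^je_1)_m$ are decreasing in $m$, nondecreasing in $j$, bounded by $1$, and that the limit sequence is forced by the recursion to be $\mathbf{1}$ (your verifications are sound: the Ces\`aro average of a decreasing sequence is decreasing, whence $a^{(j+1)}_m\ge a^{(j)}_m$, and the fixed-point induction from $L_1=1$ works). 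For the passage from boundedness of $\{\sC_{[n]}e_1\}_n$ in $\ell_1(w)$ to $w\in\ell_1$, the paper uses soft functional analysis — Alaoglu's theorem, weak$^*$ metrizability of balls via separability of $c_0(w^{-1})$, and extraction of a weak$^*$ convergent subsequence whose limit is identified coordinatewise — whereas you exploit positivity of all coordinates and apply Fatou's lemma to the weighted sums, which is shorter and entirely self-contained. What each approach buys: yours avoids citing the ergodic-theoretic structure of $\sC$ on $\C^\N$ and the compactness machinery, at the price of relying essentially on positivity (of $\sC$ and of $e_1$); the paper's weak$^*$ compactness argument is insensitive to signs and would survive in situations where no positivity is available, and it recycles results the authors need elsewhere anyway. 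Your closing remarks on the two ``in particular'' cases match the paper exactly.
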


\begin{proof}  It is known that  $\sC\colon \C^\N\to\C^\N$ is power bounded, uniformly mean ergodic and satisfies both  $\Ker (I-\sC)={\rm span}\{\mathbf{1}\}$ and
\begin{equation}\label{eq.rango_C_N}
(I-\sC)(\C^\N)=\{x\in\C^\N\colon x_1=0\}=\ov{{\rm span}\{e_r\}_{r\geq 2}};
\end{equation}
see \cite[Proposition 4.1]{ABR-7}, \cite[Proposition 4.3]{ABR-1}.

Observe that the sequence $\{\sC_{[n]}e_1\}_{n\in\N}$ converges to $\mathbf{1}$ in $\C^\N$. Indeed, we have
 $e_1=\mathbf{1}-(0,1,1,1,\ldots)$ and, since $\sC\in \cL(\C^\N)$ is power bounded, that
\[
(I-\sC)(\C^\N)=\{x\in \C^N\colon \lim_{n\to\infty}\sC_{[n]}x=0\},
\]
\cite[Chap.VIII, \S 3, Theorem 1]{Y}.  Hence, the sequence
\[
\sC_{[n]}e_1=\sC_{[n]}\mathbf{1}-\sC_{[n]}(0,1,1,1,\ldots)=\mathbf{1}-\sC_{[n]}(0,1,1,1,\ldots),\quad n\in\N,
\]
 converges to $\mathbf{1}$ in $\C^\N$ as $n\to\infty$
because
 $(0,1,1,\ldots)\in (I-\sC)(\C^\N)$ by \eqref{eq.rango_C_N}.

We now proceed to verify that $w\in \ell_1$.
By assumption $\sC^{(1,w)}$ is Ces\`aro bounded and so  $\{\sC^{(1,w)}_{[n]}e_1\}_{n\in\N}$ is a bounded subset of $\ell_1(w)$. By Alaoglu's theorem
all norm closed  balls of $\ell_1(w)$ are $\sigma(\ell_1(w),c_0(w^{-1}))$-compact (i.e., weakly$^*$ compact) and, equipped with the topology $\sigma(\ell_1(w),c_0(w^{-1}))$, they are  metrizable because $c_0(w^{-1})$ is a separable Banach space, \cite[Corollary 2.6.20]{ME}.
 Therefore, there is a subsequence $\{\sC^{(1,w)}_{[n(k)]}e_1\}_{k\in\N}$ of $\{\sC^{(1,w)}_{[n]}e_1\}_{n\in\N}$ and a vector $u\in \ell_1(w)$ such that $\sC^{(1,w)}_{[n(k)]}e_1\to u$ for the topology  $\sigma(\ell_1(w),c_0(w^{-1}))$ as $k\to\infty$. Since the topology $\sigma(\ell_1(w),c_0(w^{-1}))$ is finer than the topology of coordinatewise convergence in $\ell_1(w)$, we can conclude  that  $\sC^{(1,w)}_{[n(k)]}e_1=\sC_{[n(k)]}e_1\to u$ in $\C^\N$ as $k\to\infty$.
The previous paragraph then implies that  $u=\mathbf{1}$ and so $\mathbf{1}\in \ell_1(w)$. In other words, $w\in \ell_1$.
\end{proof}

\begin{remark}\label{R.43} \rm
If $0 < \alpha \le 1 $, then the weight $w_\alpha := \left(\frac{1}{n^\alpha} \right)_{n \in \N}$ satisfies $w_\alpha \notin \ell_1$. By Lemma \ref{L-Mean1},
$\sC^{(1,w_\alpha )}$ is not Cesàro bounded. The same is true for the weight $w$ in Remark \ref{R.VW}(ii).
\end{remark}

\begin{lemma}\label{L-Mean2} Let   $w$ be   a bounded, strictly positive sequence such that $w\in \ell_1$ and  $\sC^{(1,w)}\in \cL(\ell_1(w))$. Then
\begin{equation}\label{eq.decomp_1}
\ell_1(w)=\Ker (I-\sC^{(1,w)})\oplus \ov{(I-\sC^{(1,w)})(\ell_1(w))}.
\end{equation}
\end{lemma}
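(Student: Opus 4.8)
The plan is to reduce the abstract decomposition \eqref{eq.decomp_1} to a completely explicit splitting of $\ell_1(w)$, using the two structural identifications already obtained in Lemma \ref{L.Mean-0}. Since $w\in\ell_1$, the constant sequence $\mathbf{1}=(1)_{n\in\N}$ lies in $\ell_1(w)$, so Lemma \ref{L.Mean-0}(iv) gives $\Ker(I-\sC^{(1,w)})={\rm span}\{\mathbf{1}\}$; and Lemma \ref{L.Mean-0}(ii) gives $\ov{(I-\sC^{(1,w)})(\ell_1(w))}=\{x\in\ell_1(w)\colon x_1=0\}$. Thus \eqref{eq.decomp_1} is equivalent to the claim
\[
\ell_1(w)={\rm span}\{\mathbf{1}\}\oplus\{x\in\ell_1(w)\colon x_1=0\},
\]
and the whole proof consists in verifying this last algebraic (and topological) direct-sum identity.

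For the splitting itself I would, given an arbitrary $x=(x_n)_{n\in\N}\in\ell_1(w)$, write
\[
x=x_1\mathbf{1}+(x-x_1\mathbf{1}),
\]
and check the two memberships: the first summand lies in ${\rm span}\{\mathbf{1}\}$ (using $\mathbf{1}\in\ell_1(w)$, which is exactly where $w\in\ell_1$ enters), while the sequence $x-x_1\mathbf{1}=(x_n-x_1)_{n\in\N}$ belongs to $\ell_1(w)$ as a difference of two elements of $\ell_1(w)$ and has vanishing first coordinate, hence lies in $\{x\colon x_1=0\}$. This shows the algebraic sum is all of $\ell_1(w)$. Triviality of the intersection is immediate: if $c\mathbf{1}$ has first coordinate $0$ then $c=0$, so ${\rm span}\{\mathbf{1}\}\cap\{x\colon x_1=0\}=\{0\}$.

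To conclude that this is a genuine (topological) direct sum of closed subspaces, I would remark that ${\rm span}\{\mathbf{1}\}$ is finite-dimensional and hence closed, while $\{x\in\ell_1(w)\colon x_1=0\}$ is closed by Lemma \ref{L.Mean-0}(iii) (or directly, as the kernel of the continuous coordinate functional $x\mapsto x_1$); the associated projections $x\mapsto x_1\mathbf{1}$ and $x\mapsto x-x_1\mathbf{1}$ are continuous. I do not expect any real obstacle here: the entire content has been front-loaded into Lemma \ref{L.Mean-0}, and the only point requiring attention is that the hypothesis $w\in\ell_1$ is precisely what makes the kernel nontrivial and equal to ${\rm span}\{\mathbf{1}\}$ (without it, Lemma \ref{L.Mean-0}(iv) would force $\Ker(I-\sC^{(1,w)})=\{0\}$ and the stated decomposition would read differently).
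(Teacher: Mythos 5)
Your proof is correct and takes essentially the same route as the paper: both arguments reduce, via Lemma \ref{L.Mean-0}(ii),(iv), to the concrete splitting $\ell_1(w)={\rm span}\{\mathbf{1}\}\oplus\{x\in\ell_1(w)\colon x_1=0\}$, which is exactly where the hypothesis $w\in\ell_1$ (i.e.\ $\mathbf{1}\in\ell_1(w)$) enters. The only cosmetic difference is that you verify this splitting by the explicit formula $x=x_1\mathbf{1}+(x-x_1\mathbf{1})$, whereas the paper writes $e_1=\mathbf{1}-(\mathbf{1}-e_1)$ and then invokes the basis $\{e_r\}_{r\in\N}$ and density; your direct formula is, if anything, slightly cleaner.
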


\begin{proof}
Set $f_1:=\mathbf{1}$ and  define $f_j:=f_1-\sum_{k=1}^{j-1}e_k$ for $j\geq 2$. Since  $w\in \ell_1$, we have $\{f_j\}_{j\in\N}\su \ell_1(w)$.
Moreover, \eqref{eq.range-mean} reveals that
\[
\{f_j\}_{j\geq 2}\su \{x\in \ell_1(w)\colon x_1=0\}=\ov{{\rm span}\{e_r\colon r\geq 2\}}.
\]
In particular, this implies that
\[
e_1=(f_1-f_2)\in {\rm span}\{f_1\}\oplus \ov{{\rm span}\{e_r\colon r\geq 2\}}.
\]
Since  $\{e_r\}_{r\in\N}$ is a basis for $\ell_1(w)$, it follows that
\[
\ell_1(w)={\rm span}\{f_1\}\oplus \ov{{\rm span}\{e_r\colon r\geq 2\}}.
\]
The conclusion now follows from Lemma \ref{L.Mean-0}(ii), (iv).
\end{proof}

Let  $m\in\N$. According to  \cite[Sect. 11.12]{H}, $\sC^m$ is the moment difference operator for the measure on  $[0,1]$ given by $d\mu=f_m(t)\,dt$, with
\[
f_m(t):=\frac{1}{(m-1)!}\log^{m-1}\left(\frac{1}{t}\right),\quad  t\in (0,1].
\]
Therefore, the   identities
\begin{equation}\label{eq.iter}
(\sC^mx)_n=\sum_{k=1}^n\left(\begin{array}{c} n-1\\ k-1\end{array}\right)x_k\int_0^1t^{k-1}(1-t)^{n-k}f_m(t)\,dt,\quad n\in\N,
\end{equation}
hold for all $x\in \C^\N$;
see also  \cite[p.125]{L}.

\begin{lemma}\label{L-Mean3} Let   $w$ be   a bounded, strictly positive sequence such that $w\in \ell_1$ and  $\sC^{(1,w)}\in \cL(\ell_1(w))$. Then, for each  $r\geq 2$, the sequence  $\{(\sC^{(1,w)})^m e_r\}_{m\in\N}$ converges to $0$ in $\ell_1(w)$.
\end{lemma}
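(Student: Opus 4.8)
The plan is to read off the matrix entries of $(\sC^{(1,w)})^m$ from the integral representation \eqref{eq.iter} and then run a dominated convergence argument on the resulting series. Substituting $x=e_r$ into \eqref{eq.iter} gives, for every $m\in\N$,
\[
((\sC^{(1,w)})^m e_r)_n=\binom{n-1}{r-1}I_{n,m},\qquad I_{n,m}:=\int_0^1 t^{r-1}(1-t)^{n-r}f_m(t)\,dt\ge 0,
\]
for $n\ge r$, while the coordinate vanishes for $n<r$. Since all entries are nonnegative,
\[
\|(\sC^{(1,w)})^m e_r\|_{1,w}=\sum_{n=r}^\infty w(n)\binom{n-1}{r-1}I_{n,m},
\]
and it suffices to show this series tends to $0$ as $m\to\infty$. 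I would do this by producing a summable majorant independent of $m$ together with termwise convergence to $0$.

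For the termwise limit, the substitution $t=e^{-u}$ yields the elementary identity $\int_0^1 t^s f_m(t)\,dt=(1+s)^{-m}$ for $s>-1$. Expanding $(1-t)^{n-r}$ and integrating then gives $I_{n,m}=\sum_{j=0}^{n-r}\binom{n-r}{j}(-1)^j(r+j)^{-m}$. Because $r\ge 2$, every base satisfies $r+j\ge 2$, so each term of this finite sum tends to $0$; hence $w(n)\binom{n-1}{r-1}I_{n,m}\to 0$ as $m\to\infty$ for each fixed $n$.

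The decisive point is a majorant uniform in both $m$ and $n$. Taking $s=0$ above gives $\int_0^1 f_m(t)\,dt=1$, so
\[
I_{n,m}\le \max_{0\le t\le 1} t^{r-1}(1-t)^{n-r}.
\]
A short computation locates this maximum at $t=\frac{r-1}{n-1}$ and bounds it by $\left(\frac{r-1}{n-1}\right)^{r-1}$; together with $\binom{n-1}{r-1}\le\frac{(n-1)^{r-1}}{(r-1)!}$ this produces the key cancellation
\[
\binom{n-1}{r-1}I_{n,m}\le \frac{(r-1)^{r-1}}{(r-1)!}=:K_r,\qquad n\ge r,\ m\in\N,
\]
with $K_r$ independent of $n$ and $m$. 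Thus $w(n)\binom{n-1}{r-1}I_{n,m}\le K_r\,w(n)$, and the majorant $(K_r\,w(n))_{n}$ is summable precisely because $w\in\ell_1$. Dominated convergence for series then forces $\|(\sC^{(1,w)})^m e_r\|_{1,w}\to 0$, which is the claim.

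I expect the uniform majorant to be the only real obstacle. The naive bound $I_{n,m}\le r^{-m}$ does deliver termwise decay, but only the majorant $w(n)\binom{n-1}{r-1}r^{-m}$, whose sum is $r^{-m}\|x^{(r)}\|_{1,w}$ and may be infinite since $(r-1)$ need not lie in $R_w$ even when $w\in\ell_1$. Moreover $I_{n,m}$ is \emph{not} monotone in $m$ (for instance $I_{4,2}>I_{4,1}$ when $r=2$), so one cannot dominate by the value at $m=1$ (which would otherwise reduce matters to $M_w<\infty$). The resolution is that bounding $I_{n,m}$ by the maximum of the Beta‑type weight $t^{r-1}(1-t)^{n-r}$ exactly absorbs the binomial coefficient, reducing the whole question to the summability of $w$ itself.
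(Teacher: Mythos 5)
Your proof is correct, and it diverges from the paper's argument at the decisive step, so a comparison is worthwhile. Both proofs start from \eqref{eq.iter} and both must absorb the binomial coefficient $\binom{n-1}{r-1}$. The paper (following Galaz Fontes--Solis) does this exactly: writing $t^{r-1}(1-t)^{n-r}f_m(t)=t^{r-2}(1-t)^{n-r}\cdot tf_m(t)$, bounding $tf_m(t)$ by $a_m:=\sup_{t\in[0,1]}tf_m(t)$, and evaluating the remaining Beta integral, $\binom{n-1}{r-1}\int_0^1t^{r-2}(1-t)^{n-r}\,dt=\tfrac{1}{r-1}$, it obtains the coordinate bound $|((\sC^{(1,w)})^me_r)_n|\le a_m/(r-1)$, uniform in $n$ and decaying in $m$ because $a_m\to 0$ by \cite[Lemma 1]{GF}; summing against $w\in\ell_1$ then gives $\|(\sC^{(1,w)})^me_r\|_{1,w}\le\|w\|_1\,a_m/(r-1)\to 0$ with no limit theorem needed. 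You instead absorb the binomial coefficient into an $m$-independent constant, $\binom{n-1}{r-1}I_{n,m}\le K_r:=(r-1)^{r-1}/(r-1)!$, by taking the sup of the Beta kernel (using $\int_0^1 f_m=1$), and you manufacture the decay in $m$ separately from the exact moment identity $\int_0^1t^sf_m(t)\,dt=(1+s)^{-m}$, gluing the two together by dominated convergence. The trade-off: your argument is fully self-contained (the moment identity is an elementary substitution, whereas $a_m\to 0$ requires its own Stirling-type estimate, which the paper outsources to \cite{GF}), but it yields no rate; the paper's bound $a_m/(r-1)$, decaying in both $m$ and $r$, is precisely what gets recycled as \eqref{eq.re} in the proof of Proposition \ref{P.SC-Bound}, where the factor $\tfrac{1}{r-1}$ is needed to match the quantity $\cU_w$ --- your constant $K_r$, which grows exponentially in $r$, could not substitute there. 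Your closing observations (that $\sum_n w(n)\binom{n-1}{r-1}$ may diverge because $(r-1)$ need not lie in $R_w$ even when $w\in\ell_1$, and that $I_{n,m}$ is not monotone in $m$) correctly pinpoint why the two naive shortcuts fail, and your computations --- the critical point $t=\tfrac{r-1}{n-1}$, the bound $\binom{n-1}{r-1}\le (n-1)^{r-1}/(r-1)!$, and the boundary case $n=r$ --- all check out.
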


\begin{proof} Fix  $r\geq 2$.  By  \eqref{eq.iter}, for each $m\in\N$, we have  $((\sC^{(1,w)})^me_r)_n=0$ for $1\leq n<r$ and
\begin{equation}\label{eq.unit}
((\sC^{(1,w)})^me_r)_n=\left(\begin{array}{c} n-1\\ r-1\end{array}\right)\int_0^1t^{r-1}(1-t)^{n-r}f_m(t)\,dt, \quad n\geq r.
\end{equation}
Proceeding as in the proof of \cite[Theorem 1]{GF}, define $g_m(0):=0$, $g_m(t):=tf_m(t)$ for $0<t\leq 1$ and
\[
a_m:=\sup\{g_m(t)\colon t\in [0,1]\}, \quad m\in\N.
\]
For each  $m\in\N$ we obtain  that
$|((\sC^{(1,w)})^me_r)_n|\leq \frac{a_m}{r-1}$ for all $n\in\N$. Hence,
\[
w(n)|((\sC^{(1,w)})^me_r)_n|\leq \frac{w(n)a_m}{r-1},\quad  n\in\N,
\]
from which  it follows that
\[
\|(\sC^{(1,w)})^me_r\|_{1,w}\leq \|w\|_1\frac{a_m}{r-1}, \quad  m\in\N.
\]
According to  \cite[Lemma 1]{GF} we have
$\lim_{m\to\infty}a_m= 0$, 	
which implies the desired conclusion.
\end{proof}

We can now establish the first main result of this section.

\begin{theorem}\label{T-conv} Let   $w$ be   a bounded, strictly positive sequence such that $\sC^{(1,w)}\in \cL(\ell_1(w))$.
\begin{itemize}
\item[\rm (i)] $\sC^{(1,w)}$ is power bounded if and only if $\{(\sC^{(1,w)})^m\}_{m\in\N}$ converges in $\cL_s(\ell_1(w))$ to the projection onto $\Ker (I-\sC^{(1,w)})$ along $\ov{(I-\sC^{(1,w)})(\ell_1(w))}$.

In this case, $\sC^{(1,w)}$ is necessarily mean ergodic.
\item[\rm (ii)] $\sC^{(1,w)}$ is mean ergodic if and only if $\sC^{(1,w)}$ is Ces\`aro  bounded.
\end{itemize}
\end{theorem}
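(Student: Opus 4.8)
The plan is to deduce Theorem \ref{T-conv} from the three preparatory lemmas by a single equicontinuity-plus-density argument, applied once to the iterates $(\sC^{(1,w)})^m$ and once to their Ces\`aro averages $(\sC^{(1,w)})_{[n]}$. Write $T:=\sC^{(1,w)}$. The organizing observation is that both power boundedness and Ces\`aro boundedness of $T$ already force $w\in\ell_1$ by Lemma \ref{L-Mean1}; only once this is known do $\Ker(I-T)={\rm span}\{\mathbf{1}\}$ (Lemma \ref{L.Mean-0}(iv)), the direct decomposition $\ell_1(w)=\Ker(I-T)\oplus\ov{(I-T)(\ell_1(w))}$ (Lemma \ref{L-Mean2}), and the associated projection $P$ onto $\Ker(I-T)$ along $\ov{(I-T)(\ell_1(w))}$ become available. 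This $P$ is automatically bounded, both summands being closed. Note $P\mathbf{1}=\mathbf{1}$ and $Pe_r=0$ for $r\geq 2$, since $e_r\in\{x:x_1=0\}=\ov{(I-T)(\ell_1(w))}$; moreover the linear span $D$ of $\{\mathbf{1}\}\cup\{e_r:r\geq 2\}$ is dense in $\ell_1(w)$, because its closure is the whole space by the decomposition established in the proof of Lemma \ref{L-Mean2}.

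For the nontrivial implication in (i) I would assume $T$ power bounded, so $M:=\sup_m\|T^m\|<\infty$, and first establish $T^m y\to Py$ for every $y\in D$: this holds on the generators because $T^m\mathbf{1}=\mathbf{1}=P\mathbf{1}$ for all $m$, while $T^m e_r\to 0=Pe_r$ in $\ell_1(w)$ by Lemma \ref{L-Mean3}. A routine three-term estimate of $\|T^m x-Px\|_{1,w}$, splitting $x=y+(x-y)$ with $y\in D$ close to $x$ and controlling the $T^m(x-y)$ term by $M$, then upgrades this to $T^m x\to Px$ for all $x$, that is, $T^m\to P$ in $\cL_s(\ell_1(w))$. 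The converse is immediate from the Principle of Uniform Boundedness (strong convergence of $\{T^m\}$ makes each orbit $\{T^m x\}$ bounded), and the concluding mean ergodicity assertion follows because strong convergence $T^m\to P$ forces the Ces\`aro averages $T_{[n]}$ to converge strongly to the same limit $P$.

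Part (ii) runs through the identical machinery with $T_{[n]}$ replacing $T^m$. The implication ``mean ergodic $\Rightarrow$ Ces\`aro bounded'' is the general Banach-space fact recorded before Lemma \ref{L.Mean-0}. For the reverse I would assume $T$ Ces\`aro bounded, hence $M':=\sup_n\|T_{[n]}\|<\infty$ and (via Lemma \ref{L-Mean1}) $w\in\ell_1$, so that the decomposition and $P$ are again in force. On $D$ one has $T_{[n]}\mathbf{1}=\mathbf{1}=P\mathbf{1}$, while for $r\geq 2$ the convergence $T^m e_r\to 0$ of Lemma \ref{L-Mean3} gives $T_{[n]}e_r=\frac{1}{n}\sum_{m=1}^n T^m e_r\to 0=Pe_r$, since Ces\`aro means of a null sequence are null. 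The same density estimate, now with $M'$ in place of $M$, extends this to all $x$, yielding $T_{[n]}\to P$ strongly, i.e.\ $T$ is mean ergodic.

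The work here is genuinely front-loaded into Lemmas \ref{L-Mean1}--\ref{L-Mean3}; the theorem itself is assembly. The only point demanding care, rather than a real analytic difficulty, is the logical ordering: boundedness (power or Ces\`aro) must be invoked first through Lemma \ref{L-Mean1} to secure $w\in\ell_1$, because the decomposition and the projection $P$ appearing in the statement are only defined once $w\in\ell_1$. The sole quantitative input, the convergence $T^m e_r\to 0$, has already been isolated in Lemma \ref{L-Mean3}, so no further estimation is required beyond the standard density argument.
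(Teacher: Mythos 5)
Your proposal is correct and follows essentially the same route as the paper: Lemma \ref{L-Mean1} to secure $w\in\ell_1$, the decomposition of Lemma \ref{L-Mean2}, the convergence $(\sC^{(1,w)})^m e_r\to 0$ of Lemma \ref{L-Mean3}, then a density-plus-equicontinuity argument, with the converse and the mean ergodicity claim handled by the Principle of Uniform Boundedness and averaging of a convergent sequence, exactly as in the paper. The only cosmetic difference is that you run the density argument on the whole space via the dense span of $\{\mathbf{1}\}\cup\{e_r\colon r\geq 2\}$, whereas the paper splits $x=y+z$ along the decomposition and argues only on $\ov{(I-\sC^{(1,w)})(\ell_1(w))}$, where ${\rm span}\{e_r\colon r\geq 2\}$ is dense.
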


\begin{proof} (i) Assume  that $\sC^{(1,w)}$ is power bounded. Then $w\in \ell_1$ by Lemma \ref{L-Mean1}.
It  follows from  Lemma \ref{L-Mean2} that \eqref{eq.decomp_1} holds and
from Lemma \ref{L.Mean-0} that
$\Ker (I-\sC^{(1,w)})={\rm span}\{\mathbf{1}\}$ and $\ov{(I-\sC^{(1,w)})(\ell_1(w))}=\ov{{\rm span}}\{e_r\}_{r\geq 2}$.
So, by \eqref{eq.decomp_1}, for  $x\in \ell_1(w)$
 we have  $x=y+z$ with $y\in \Ker (I-\sC^{(1,w)})$ and $z\in \ov{(I-\sC^{(1,w)})(\ell_1(w))}$. Then, for each $m\in\N$, it follows that
\begin{equation}\label{eq.decomvector}
(\sC^{(1,w)})^m x=(\sC^{(1,w)})^m y+(\sC^{(1,w)})^mz=y+(\sC^{(1,w)})^mz.
\end{equation}
Moreover, for each $r\geq 2$,   $\lim_{m\to\infty}(\sC^{(1,w)})^me_r= 0$ in $\ell_1(w)$; see Lemma \ref{L-Mean3}.
  Since $\sC^{(1,w)}$ is power bounded and ${\rm span}\{e_r\}_{r\geq 2}$ is dense in $\ov{(I-\sC^{(1,w)})(\ell_1(w))}$ (cf. Lemma \ref{L.Mean-0}(ii)), it follows that $\lim_{m\to\infty}(\sC^{(1,w)})^mz= 0$ in $\ell_1(w)$  for each $z\in \ov{(I-\sC^{(1,w)})(\ell_1(w)))}$. Hence,  $\lim_{m\to\infty}(\sC^{(1,w)})^m x=y$ in $\ell_1(w)$; see \eqref{eq.decomvector}.

The assumption of the reverse implication  implies, in particular, that $\{(\sC^{(1,w)})^m\}_{m\in\N}$ converges in $\cL_s(\ell_1(w))$ and so, by the Principle of Uniform Boundedness, $\sC^{(1,w)}$ is power bounded.

If a sequence in a locally convex Hausdorff space (briefly, lcHs) is convergent,  so is its sequence of averages (to the same limit). Hence, the convergence  of $\{(\sC^{(1,w)})^m\}_{m\in\N}$  in the lcHs $\cL_s(\ell_1(w)):=(\cL(\ell_1(w)),\tau_s)$ implies the convergence  of $\{\sC^{(1,w)}_{[n]}\}_{n\in\N}$ in $\cL_s(\ell_1(w))$, i.e., $\sC^{(1,w)}$ is mean ergodic.

(ii) If $\sC^{(1,w)}$ is mean ergodic then, as noted before, $\sC^{(1,w)}$ is also Ces\`aro  bounded.

Assume now that $\sC^{(1,w)}$ is Ces\`aro  bounded, in which case $w\in \ell_1$ (cf. Lemma \ref{L-Mean1}).
Again by Lemma \ref{L-Mean2} we see that \eqref{eq.decomp_1} holds. We need to verify that $\{\sC^{(1,w)}_{[n]}\}_{n\in\N}$ is a convergent sequence in $\cL_s(\ell_1(w))$.
This follows from an argument similar to the one in part (i).
\end{proof}

The following result should be compared with Lemma \ref{L-Mean3}.

\begin{corollary}\label{C.WN} Let $w$ be a bounded, strictly positive sequence such that $\sC^{(1,w)}\in \cL(\ell_1(w))$   is power bounded. Then $\lim_{m\to\infty} (\sC^{(1,w)})^me_1=\mathbf{1}$.
\end{corollary}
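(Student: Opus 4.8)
The plan is to read off the limit directly from the strong–operator convergence already established in Theorem \ref{T-conv}(i), once the relevant projection is identified and $e_1$ is decomposed appropriately. First I would invoke power boundedness together with Lemma \ref{L-Mean1} to conclude that $w\in \ell_1$, so that $\mathbf{1}\in \ell_1(w)$ and the decomposition \eqref{eq.decomp_1} of Lemma \ref{L-Mean2} is at our disposal. By Theorem \ref{T-conv}(i), the iterates $\{(\sC^{(1,w)})^m\}_{m\in\N}$ converge in $\cL_s(\ell_1(w))$ to the projection $P$ onto $\Ker(I-\sC^{(1,w)})$ along $\ov{(I-\sC^{(1,w)})(\ell_1(w))}$; in particular $\lim_{m\to\infty}(\sC^{(1,w)})^m e_1 = P e_1$, so it suffices to compute $P e_1$.

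Next I would identify the two summands of the direct decomposition explicitly. Lemma \ref{L.Mean-0}(iv) gives $\Ker(I-\sC^{(1,w)})={\rm span}\{\mathbf{1}\}$ (valid precisely because $\mathbf{1}\in \ell_1(w)$), while Lemma \ref{L.Mean-0}(ii) gives $\ov{(I-\sC^{(1,w)})(\ell_1(w))}=\{x\in \ell_1(w)\colon x_1=0\}$. Thus $P$ is the identity on multiples of $\mathbf{1}$ and annihilates every vector with vanishing first coordinate.

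The crux is then the correct splitting of $e_1$. I would write
\[
e_1 = \mathbf{1} + (e_1-\mathbf{1}),
\]
noting that $e_1-\mathbf{1}=(0,-1,-1,-1,\ldots)$ lies in $\ell_1(w)$ (as both $e_1$ and $\mathbf{1}$ do, using $w\in \ell_1$) and has first coordinate $0$, hence belongs to $\ov{(I-\sC^{(1,w)})(\ell_1(w))}$, whereas $\mathbf{1}\in \Ker(I-\sC^{(1,w)})$. Applying $P$ to this decomposition yields $P e_1=\mathbf{1}$, and combining with the strong–operator limit above gives $\lim_{m\to\infty}(\sC^{(1,w)})^m e_1=\mathbf{1}$, as required.

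There is no genuine obstacle here: the entire content has been prepared by Theorem \ref{T-conv}(i) and Lemmas \ref{L.Mean-0}, \ref{L-Mean1} and \ref{L-Mean2}. The only point requiring a moment of care is to observe that the displayed splitting of $e_1$ is exactly the decomposition adapted to the projection $P$ (first coordinate matching forces the coefficient of $\mathbf{1}$ to equal $1$), so that $P e_1=\mathbf{1}$ rather than some other multiple.
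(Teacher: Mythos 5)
Your proof is correct and follows essentially the same route as the paper: both rest on Theorem \ref{T-conv}(i) (so that $\lim_m (\sC^{(1,w)})^m e_1$ exists and is the projection of $e_1$ onto $\Ker(I-\sC^{(1,w)})={\rm span}\{\mathbf{1}\}$), after invoking Lemma \ref{L-Mean1} to get $w\in\ell_1$. The only cosmetic difference is how the coefficient of $\mathbf{1}$ is pinned down: the paper notes that the first coordinate of $(\sC^{(1,w)})^m e_1$ equals $1$ for every $m$ and passes to the limit, whereas you compute $Pe_1$ directly from the splitting $e_1=\mathbf{1}+(e_1-\mathbf{1})$ via Lemma \ref{L.Mean-0}(ii); these amount to the same observation.
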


\begin{proof} Lemma \ref{L-Mean1} implies that $\mathbf{1}\in \ell_1(w)$. According to Theorem \ref{T-conv}(i) there exists $u\in \Ker (I-\sC^{(1,w)})$ such that
\begin{equation}\label{eq.limite_1}
\lim_{m\to\infty} (\sC^{(1,w)})^me_1=u \ \mbox{ in }\ \ell_1(w).
\end{equation}
Since $\Ker (I-\sC^{(1,w)})={\rm span}\{\mathbf{1}\}$, there exists $\lambda\in \C$ such that $u=\lambda \mathbf{1}$. But, the $1$-st coordinate of  $(\sC^{(1,w)})^me_1$ equals $1$ for every $m\in\N$ and so it follows from \eqref{eq.limite_1} that $\lambda=1$.
\end{proof}

\begin{remark}\label{R.NSW}\rm Theorem \ref{T-conv} is  special for the Ces\`aro operator acting in $\ell_1(w)$ and is not valid for a general Banach space operator $T\in \cL(X)$.

Indeed, concerning part (i) of Theorem \ref{T-conv}, the proof shows that whenever $\{T^m\}_{m\in\N}$ converges in $\cL_s(X)$, then $T$ is necessarily power bounded. To see that the converse is false in general, consider the Banach space $X=C([0,1])$ equipped with the sup-norm $\|\cdot\|_\infty$ and define $T\in \cL(X)$ by $Tf:=\varphi f$ for $f\in X$, where $\varphi(t):=t$ for $t\in [0,1]$. Since $T^mf=\varphi^m f$ for $f\in X$, with $\|\varphi^m\|_\infty\leq 1$ for all $m\in\N$, it is clear that $T$ is power bounded. However, if ${\mathbf 1}$ is the function constantly equal to $1$ in $[0,1]$, then the sequence $T^m{\mathbf 1}=\varphi^m$, $m\in\N$, converges pointwise on $[0,1]$ to the discontinuous function $\chi_{\{1\}}$. In particular, $\{T^m{\mathbf 1}\}_{m\in\N}$ cannot be a convergent sequence in $X$.

Concerning part (ii) of Theorem \ref{T-conv}, the mean ergodicity of an operator always implies its Ces\`aro boundedness. To see that the converse is false in general,
let $X$ and $T$ be as in the previous paragraph. Since $T$ is power bounded, it is  Ces\`aro bounded. But, $T$ is not mean ergodic. Indeed,
\[
T_{[n]}{\mathbf 1}=\frac{1}{n}\sum_{m=1}^n\varphi^m,\quad n\in\N.
\]
Since $\left(\frac{1}{n}\sum_{m=1}^n\varphi^m\right)(t)=\frac{t-t^{n+1}}{n(1-t)}$ for $t\in [0,1)$ and $\left(\frac{1}{n}\sum_{m=1}^n\varphi^m\right)(1)=1$, for all $n\in\N$, it is clear that $\{T_{[n]}{\mathbf 1}\}_{n\in\N}$ converges pointwise on $[0,1]$ to the discontinuous function $\chi_{\{1\}}$. In particular, $\{T_{[n]}{\mathbf 1}\}_{n\in\N}$ cannot be a convergent sequence in $X$ and so $T$ is not mean ergodic.
\end{remark}


Given a    bounded, strictly positive sequence $w$, for the remainder of this section we use the notation
\[
X_1(w):=\{x\in \ell_1(w)\colon x_1=0\},
\]
which is always a closed subspace of $\ell_1(w)$. In the event that  $\sC^{(1,w)}\in \cL(\ell_1(w))$, the  subspace $X_1(w)$ is clearly  invariant for $\sC^{(1,w)}$; see \eqref{eq.Ce-op}.

\begin{lemma}\label{L.range} Let $w$ be   a bounded, strictly positive sequence such that  $w\in \ell_1$ and  $\sC^{(1,w)}\in \cL(\ell_1(w))$.  Then
\begin{equation}\label{eq.range_0}
(I-\sC^{(1,w)})(\ell_1(w))=(I-\sC^{(1,w)})(X_1(w)).
\end{equation}
\end{lemma}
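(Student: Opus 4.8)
The plan is to prove the two inclusions separately, the nontrivial one resting entirely on the explicit form of the kernel of $I-\sC^{(1,w)}$ supplied by Lemma \ref{L.Mean-0}(iv). The inclusion $(I-\sC^{(1,w)})(X_1(w))\su (I-\sC^{(1,w)})(\ell_1(w))$ is immediate, since $X_1(w)\su \ell_1(w)$ and both sides are just the images of these nested spaces under the same operator.

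For the reverse inclusion I would argue as follows. Since $w\in \ell_1$, the constant vector $\mathbf{1}=(1)_{n\in\N}$ belongs to $\ell_1(w)$, and Lemma \ref{L.Mean-0}(iv) gives $\Ker(I-\sC^{(1,w)})={\rm span}\{\mathbf{1}\}$; in particular $(I-\sC^{(1,w)})\mathbf{1}=0$. Now fix any $x\in \ell_1(w)$ and set
\[
x':=x-x_1\mathbf{1}.
\]
Then $x'\in \ell_1(w)$ (being a linear combination of $x$ and $\mathbf{1}$, both of which lie in $\ell_1(w)$), and its first coordinate is $x'_1=x_1-x_1=0$, so that $x'\in X_1(w)$. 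Moreover, using $(I-\sC^{(1,w)})\mathbf{1}=0$ and linearity,
\[
(I-\sC^{(1,w)})x'=(I-\sC^{(1,w)})x-x_1(I-\sC^{(1,w)})\mathbf{1}=(I-\sC^{(1,w)})x.
\]
Hence every element $(I-\sC^{(1,w)})x$ of the full range is already attained at the point $x'\in X_1(w)$, which gives $(I-\sC^{(1,w)})(\ell_1(w))\su (I-\sC^{(1,w)})(X_1(w))$ and completes the equality \eqref{eq.range_0}.

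There is essentially no serious obstacle here; the only point that genuinely requires the standing hypothesis is that $\mathbf{1}\in \ell_1(w)$ together with the identification of the kernel, both of which follow from $w\in \ell_1$ via Lemma \ref{L.Mean-0}(iv). The single idea driving the proof is that any vector can be corrected by a multiple of $\mathbf{1}$ to kill its first coordinate without changing its image under $I-\sC^{(1,w)}$, precisely because $\mathbf{1}$ spans the kernel. (One could equivalently phrase this as the observation that $\ell_1(w)=X_1(w)\oplus{\rm span}\{\mathbf{1}\}$, since $\mathbf{1}$ has nonzero first coordinate, and $I-\sC^{(1,w)}$ annihilates the second summand.)
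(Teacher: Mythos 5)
Your proof is correct and is essentially the paper's own argument: the paper likewise passes from $x$ to the vector $y$ with $y_i:=x_i-x_1$ (that is, $y=x-x_1\mathbf{1}$), notes that $y\in X_1(w)$ precisely because $w\in\ell_1$ puts $(0,1,1,\ldots)$ in $\ell_1(w)$, and concludes $(I-\sC^{(1,w)})x=(I-\sC^{(1,w)})y$. The only cosmetic difference is that the paper checks this last equality by comparing coordinates through the explicit formulas for $I-\sC^{(1,w)}$, whereas you obtain it more cleanly from linearity together with $(I-\sC^{(1,w)})\mathbf{1}=0$.
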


\begin{proof} Clearly, $(I-\sC^{(1,w)})(X_1(w))\su (I-\sC^{(1,w)})(\ell_1(w))$.

To verify  the reverse inclusion, we proceed as in the proof of  \cite[Lemma 4.5]{ABR-9}. First observe, via \eqref{eq.Ce-op},  that for each
$x\in \ell_1(w)$ we have
\begin{equation}\label{eq.rango1}
(I-\sC^{(1,w)})x=\left(0, x_2-\frac{x_1+x_2}{2},x_3-\frac{x_1+x_2+x_3}{3},\ldots\right),
\end{equation}
and, in particular,  for each $ y\in X_1(w) $  that
\begin{equation}\label{eq.rango2}
(I-\sC^{(1,w)})y=\left(0, \frac{y_2}{2},y_3-\frac{y_2+y_3}{3},y_4-\frac{y_2+y_3+y_4}{4},\ldots\right) .
\end{equation}
Fix $x\in \ell_1(w)$. We apply  \eqref{eq.rango1} to conclude that
\begin{equation}\label{eq.rango3}
x_j-\frac{1}{j}\sum_{k=1}^jx_k=\frac{1}{j}\left((j-1)x_j-\sum_{k=1}^{j-1}x_k\right),\quad j\geq 2,
\end{equation}
is the $j$-th coordinate of the vector $(I-\sC^{(1,w)})x$. Set $y_i:=x_i-x_1$ for all $i\in\N$.  Then the vector $y:=(y_i)_{i\in\N}$
belongs to $ X_1(w)$ because  $w\in \ell_1$  implies that $(0,1,1,1,\ldots)\in \ell_1(w)$.  We  apply  \eqref{eq.rango2} to conclude
that the $j$-th coordinate of $(I-\sC^{(1,w)})y$ is given by \eqref{eq.rango3} for $j\geq 2$. Hence,
\[
(I-\sC^{(1,w)})x=(I-\sC^{(1,w)})y\in (I-\sC^{(1,w)})(X_1(w)).
\]
\end{proof}

\begin{remark}\label{R.WW_1}\rm The equality \eqref{eq.range_0} \textit{fails} whenever $w\not\in \ell_1 $ and $\sC^{(1,w)}\in \cL(\ell_1(w))$. Indeed, in this case Lemma \ref{L.Mean-0}(iv) implies that $(I-\sC^{(1,w)})$ is injective. This implies that $x:=(I-\sC^{(1,w)})e_1$ cannot belong to $(I-\sC^{(1,w)})(X_1(w))$. So, the containment
\[
(I-\sC^{(1,w)})(X_1(w))\subsetneqq (I-\sC^{(1,w)})(\ell_1(w))
\]
is proper whenever $w\not\in \ell_1$. For the existence of weights $w\not\in \ell_1$ such that $\sC^{(1,w)}\in \cL(\ell_1(w))$ see Remark \ref{R.VW}(ii) and also Examples \ref{esempi-C}(ii) with $0<\alpha\leq 1$.
\end{remark}

Given a bounded, strictly  positive weight $w=(w(n))_{n\in\N}$, we introduce the associated quantity
\begin{equation}\label{eq.asso}
\cU_w:=\sup_{m\in\N}\frac{1}{mw(m+1)}\sum_{n=m+1}^\infty w(n)=\sup_{r\geq 2}\frac{1}{(r-1)w(r)}\sum_{n=r}^\infty w(n).
\end{equation}
It turns out that $\cU_w$ is useful for determining certain mean ergodic and related properties of $\sC^{(1,w)}$. As a sample, it is clear that $\cU_w<\infty$ implies $w\in \ell_1$. Moreover, $\cU_w<\infty$ also implies that $\sC^{(1,w)}\in \cL(\ell_1(w))$. This follows directly from Proposition \ref{P1-C}(i) and the inequality
\[
\frac{1}{w(m+1)}\sum_{n=m+1}^\infty\frac{w(n)}{n}\leq \frac{1}{mw(m+1)}\sum_{n=m+1}^\infty w(n),\quad m\in\N.
\]

The following result characterizes the condition $\cU_w<\infty$.

\begin{prop}\label{L.range_closed} Let $w$ be   a bounded, strictly positive sequence such that $w\in \ell_1$ and  $\sC^{(1,w)}\in \cL(\ell_1(w))$. The following  conditions are equivalent.
\begin{itemize}
\item[\rm (i)] The range of $I-\sC^{(1,w)}  $ is   closed in $\ell_1(w)$.
\item[\rm (ii)] $(I-\sC^{(1,w)})(\ell_1(w))=X_1(w)$.
\item[\rm (iii)] $(I-\sC^{(1,w)})(X_1(w))=X_1(w)$.
\item[\rm (iv)] The quantity $\cU_w<\infty$.
\end{itemize}
\end{prop}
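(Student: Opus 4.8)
The equivalences (i) $\Leftrightarrow$ (ii) and (ii) $\Leftrightarrow$ (iii) are essentially already at hand: by Lemma \ref{L.Mean-0}(iii) the range of $I-\sC^{(1,w)}$ is closed precisely when it equals $X_1(w)$, which is exactly (ii); and Lemma \ref{L.range} gives $(I-\sC^{(1,w)})(\ell_1(w))=(I-\sC^{(1,w)})(X_1(w))$, so (ii) and (iii) assert the same thing. Thus the genuine content is the equivalence (iii) $\Leftrightarrow$ (iv), and this is where I would focus the work.

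Write $T$ for the restriction of $I-\sC^{(1,w)}$ to the invariant, closed (hence Banach) subspace $X_1(w)$. First I would record that $T$ is injective on $X_1(w)$: any $y\in\Ker T$ lies in $\Ker(I-\sC,\C^\N)={\rm span}\{\mathbf 1\}$, and since $\mathbf 1\notin X_1(w)$ this forces $y=0$. Next, for a prescribed target $b=(0,b_2,b_3,\dots)\in X_1(w)$ I would solve $Ty=b$ formally in $\C^\N$. Using \eqref{eq.rango2}, the equation reads $y_j-\frac1j\sum_{k=2}^j y_k=b_j$ for $j\ge 2$; setting $S_j:=\sum_{k=2}^j y_k$ (with $S_1=0$) converts this into the first-order recursion $S_j=\frac{j}{j-1}(S_{j-1}+b_j)$, which solves explicitly to $S_j=j\sum_{k=2}^j\frac{b_k}{k-1}$ and hence
\[
y_j=\frac{j}{j-1}\,b_j+\sum_{k=2}^{j-1}\frac{b_k}{k-1},\qquad j\ge2,\quad y_1=0.
\]
This exhibits the unique formal inverse $U\colon\C^\N\to\C^\N$ of $T$ as a lower-triangular matrix $U=(u_{jk})$ with $u_{jj}=\frac{j}{j-1}$, $u_{jk}=\frac{1}{k-1}$ for $2\le k<j$, and all remaining entries equal to $0$.

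The crux is then to show that $U$ maps $X_1(w)$ boundedly into $\ell_1(w)$ if and only if $\cU_w<\infty$. To this end I would transfer the matrix $U$ to $\ell_1$ via the isometric isomorphism $\Phi_w$ and invoke Lemma \ref{L_U}: the transferred matrix has entries $\frac{w(j)}{w(k)}u_{jk}$, so $U$ is continuous exactly when the column sums $\sum_{j\ge k}\frac{w(j)}{w(k)}|u_{jk}|$ are bounded in $k$. A direct computation gives
\[
\sum_{j=k}^\infty\frac{w(j)}{w(k)}|u_{jk}|=\frac{k}{k-1}+\frac{1}{(k-1)w(k)}\sum_{j=k+1}^\infty w(j)=1+\frac{1}{(k-1)w(k)}\sum_{n=k}^\infty w(n),
\]
whose supremum over $k\ge2$ is precisely $1+\cU_w$ (compare \eqref{eq.asso}). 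Hence $U$ is bounded on $X_1(w)$ if and only if $\cU_w<\infty$.

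Combining these pieces closes the loop. If $\cU_w<\infty$, then $U\in\cL(X_1(w))$ and $TU=I$ on $X_1(w)$ by construction, so $T$ is onto $X_1(w)$, which is (iii). Conversely, if (iii) holds then $T$ is a continuous bijection of the Banach space $X_1(w)$ onto itself, so by the open mapping theorem $T^{-1}$ is continuous; but $T^{-1}=U$, and the boundedness of $U$ forces $\cU_w<\infty$ by the computation above. I expect the main obstacle to be the bookkeeping in solving the recursion and, above all, in matching the resulting column sums exactly to the quantity $\cU_w$ of \eqref{eq.asso}; everything else reduces to the earlier lemmas together with a standard open mapping argument.
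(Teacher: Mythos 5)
Your proposal is correct and follows essentially the same route as the paper's proof: the same reduction of (i)$\Leftrightarrow$(ii)$\Leftrightarrow$(iii) via Lemma \ref{L.Mean-0}(iii) and Lemma \ref{L.range}, and the same strategy for (iii)$\Leftrightarrow$(iv) -- injectivity plus the Open Mapping Theorem, followed by computing the explicit lower-triangular inverse matrix and testing its column sums against Lemma \ref{L_U}. The only differences are presentational: the paper conjugates by the shift $S\colon X_1(w)\to\ell_1(\tilde w)$ before inverting (your matrix $U$ is exactly the paper's $B$ up to the index shift $j=n+1$, $k=m+1$), and your exact evaluation of the column sums as $1+\cU_w$ is a slightly cleaner way to reach the same conclusion.
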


\begin{proof} (i)$\Leftrightarrow$(ii) follows from Lemma \ref{L.Mean-0}(iii) and the definition of $X_1(w)$.

(ii)$\Leftrightarrow$(iii) is clear from Lemma \ref{L.range}.

(iii)$\Leftrightarrow$(iv). First observe (via \eqref{eq.Ce-op}) that $(I-\sC^{(1,w)})$ maps $X_1(w)$ into itself, and that the restriction $(I-\sC^{(1,w)})\colon X_1(w)\to X_1(w)$ is both continuous and injective. The injectivity follows from Lemma \ref{L.Mean-0}(iv) as $\mathbf{1}\not\in X_1(w)$.

According to the previous paragraph, condition (iii) is equivalent to the restricted operator  $(I-\sC^{(1,w)})\colon X_1(w)\to X_1(w)$ being bijective (i.e.,  surjective). By the Open Mapping Theorem this, in turn, is equivalent to $(I-\sC^{(1,w)})\colon X_1(w)\to X_1(w)$ having a continuous inverse.

So, (iii)$\Leftrightarrow$(iv) is equivalent to showing that (iv) holds if and only if the operator $(I-\sC^{(1,w)})\colon X_1(w)\to X_1(w)$ is bijective with a continuous inverse.
To do this we first note, with  $\tilde{w}(n):=w(n+1)$ for $ n\in\N$, that the linear  shift operator $S\colon X_1(w)\to \ell_1(\tilde{w})$ defined by
\[
S(x):=(x_2,x_3,\ldots), \quad x\in X_1(w),
\]
is an isometric isomorphism 	
	of $X_1(w)$  onto $\ell_1(\tilde{w})$. So, it suffices to verify
\[
A:=S\circ (I-\sC^{(1,w)})|_{X_1(w)}\circ S^{-1}\in \cL(\ell_1(\tilde{w})),
\]
which is given by the formula
\begin{equation}\label{eq.formulaW}
Ax=\left(\frac{1}{n+1}\left(nx_n-\sum_{k=1}^{n-1}x_k\right)\right)_{n\in\N}, \quad x\in \ell_1(\tilde{w}),
\end{equation}
with $x_0:=0$ (see the purely algebraic calculations in the proof of Lemma 4.5 in \cite{ABR-9}),
is bijective with a continuous inverse if and only if (iv) holds.

Now the operator $A$ given by \eqref{eq.formulaW}, when considered from  $\C^\N$ to $\C^\N$,  is bijective  and routine calculations show that its inverse map $B\colon \C^\N\to \C^\N$ is determined by the lower triangular matrix $B=(b_{nm})_{n,m\in\N}$ with entries given by
  $b_{nm}=0$ if $m>n$, $b_{nm}=\frac{n+1}{n}$ if $m=n$ and $b_{nm}=\frac{1}{m}$ if $1\leq m<n$. The restriction of the linear map $B$ acts continuously from $\ell_1(\tilde{w})$ into itself if and only if
$D:=\Phi_{\tilde{w}}\circ B\circ \Phi_{\tilde{w}}^{-1}$ belongs to $\cL(\ell_1)$, where   $\Phi_{\tilde{w}}\colon \ell_1(\tilde{w})\to \ell_1$ is the  isometric isomorphism    given by
\[
 \Phi_{\tilde{w}}(x):=(w(n+1)x_n)_{n\in\N},\quad x\in \ell_1(\tilde{w}).
\]
Of course, both $\Phi_{\tilde{w}}$ and $\Phi_{\tilde{w}}^{-1}$ can be extended to isomorphisms between $\C^\N$ (which we denote by the same symbol as no confusion can occur). The linear operator $D$ (considered from  $\C^\N$ into itself) is associated with the lower triangular matrix
 $(\frac{w(n+1)}{w(m+1)}b_{nm})_{n,m\in\N}$. By Lemma \ref{L_U}, $D\in \cL(\ell_1)$ if and only if $\sup_{m\in\N}\sum_{n=1}^\infty\frac{w(n+1)b_{nm}}{w(m+1)}<\infty$. Since $w\in \ell_1$ and $\lim_{m\to\infty}\frac{m+1}{m}=1$, this condition is equivalent to $\cU_w<\infty$ (see \eqref{eq.asso}). This completes the proof of (iii)$\Leftrightarrow$(iv).
\end{proof}

\begin{prop}\label{P.SC-Bound} Let   $w$ be   a bounded, strictly positive sequence such that $\cU_w<\infty$. Then  $\sC^{(1,w)}$ is  power bounded and uniformly mean ergodic.
\end{prop}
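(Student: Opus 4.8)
The plan is to reduce both assertions to a single spectral fact: that the restriction of $\sC^{(1,w)}$ to the invariant hyperplane $X_1(w)=\{x\in\ell_1(w)\colon x_1=0\}$ has spectral radius strictly less than $1$. First note that $\cU_w<\infty$ forces $w\in\ell_1$ and $\sC^{(1,w)}\in\cL(\ell_1(w))$ (the remarks following \eqref{eq.asso}), so Lemma~\ref{L-Mean2} together with Lemma~\ref{L.Mean-0}(ii),(iv) gives the decomposition $\ell_1(w)={\rm span}\{\mathbf 1\}\oplus X_1(w)$ into closed subspaces, both invariant under $\sC^{(1,w)}$ (since $\sC\mathbf 1=\mathbf 1$ and the first coordinate of $\sC x$ equals $x_1$). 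Hence $\sC^{(1,w)}$ decomposes as $I\oplus T_1$ with $T_1:=\sC^{(1,w)}|_{X_1(w)}$, and $\sigma(\sC^{(1,w)})=\{1\}\cup\sigma(T_1)$. Moreover Proposition~\ref{L.range_closed} ((iv)$\Rightarrow$(iii)) shows $(I-\sC^{(1,w)})$ maps $X_1(w)$ onto itself, while it is injective there by Lemma~\ref{L.Mean-0}(iv) (as $\mathbf 1\notin X_1(w)$); by the Open Mapping Theorem $I-T_1$ is invertible, i.e. $1\in\rho(T_1)$. If we can show $r(T_1)<1$, then $\|T_1^m\|\to0$, which yields at once both power boundedness and uniform mean ergodicity.

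The heart of the argument is the inclusion $\sigma(\sC^{(1,w)})\subseteq\{\lambda\colon|\lambda-\tfrac12|\le\tfrac12\}$. Since $\Sigma_0$ lies in this disk, it suffices by Theorem~\ref{t:sp-dec-N}(ii) to verify the resolvent condition \eqref{e.spe_car} for every $\lambda\notin\Sigma_0$ with $\alpha:={\rm Re}(1/\lambda)<1$, which is exactly the condition $|\lambda-\tfrac12|>\tfrac12$. Writing $W(m):=\sum_{n>m}w(n)$ and using $1-\alpha>0$, I would first bound $\sum_{n=m+1}^\infty w(n)n^{\alpha-1}\le(m+1)^{\alpha-1}W(m)$; next the definition \eqref{eq.asso} of $\cU_w$, applied at $r=m$, gives $W(m-1)=\sum_{n\ge m}w(n)\le(m-1)\cU_w\,w(m)$, whence $W(m)/w(m)\le(m-1)\cU_w$ for $m\ge2$; finally $m^\alpha(m+1)^{1-\alpha}\ge m$ yields $\tfrac{m-1}{m^\alpha(m+1)^{1-\alpha}}\le1$. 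Combining these shows that each term of \eqref{e.spe_car} with $m\ge2$ is at most $\cU_w$, while the single $m=1$ term is finite because $w\in\ell_1$; hence the supremum is finite and $\lambda\in\rho(\sC^{(1,w)})$.

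With the disk inclusion established, $\sigma(T_1)=\sigma(\sC^{(1,w)})\setminus\{1\}$ is a compact subset of $\{\lambda\colon|\lambda-\tfrac12|\le\tfrac12\}$ that does not contain $1$. A direct computation shows that $1$ is the only point of this disk of modulus $\ge1$, so the maximum of $|\lambda|$ over the compact set $\sigma(T_1)$ is attained at a point different from $1$ and is therefore strictly less than $1$; thus $r(T_1)<1$ and $\|T_1^m\|\to0$. Consequently $T_1$ is power bounded and its Cesàro averages $\tfrac1n\sum_{m=1}^n T_1^m$ tend to $0$ in operator norm. Since $\sC^{(1,w)}$ acts as the identity on ${\rm span}\{\mathbf 1\}$ and the projections associated with the decomposition are bounded, it follows that $\sC^{(1,w)}$ is power bounded and that $\sC^{(1,w)}_{[n]}$ converges in $\cL(\ell_1(w))$ to the projection onto ${\rm span}\{\mathbf 1\}$ along $X_1(w)$, i.e. $\sC^{(1,w)}$ is uniformly mean ergodic.

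I expect the main obstacle to be the spectral inclusion of the second paragraph: the whole proof hinges on extracting, from the single scalar bound $\cU_w<\infty$, a uniform control of the weighted tails in \eqref{e.spe_car} valid simultaneously for all $\lambda$ outside the disk (including those with $\alpha\le0$). The estimate $W(m)/w(m)\le(m-1)\cU_w$ is precisely where the hypothesis enters and is what makes the verification uniform in $m$. The remaining ingredients—the block-diagonal identity $\sigma(\sC^{(1,w)})=\{1\}\cup\sigma(T_1)$ for complemented invariant subspaces, and the passage from $r(T_1)<1$ to power boundedness and uniform mean ergodicity—are routine once the spectrum has been located inside the disk and $1$ has been removed via Proposition~\ref{L.range_closed}.
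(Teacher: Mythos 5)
Your proof is correct, and it takes a genuinely different route from the paper's. The paper proves power boundedness by estimating the iterates directly: it recalls from the proof of Lemma~\ref{L-Mean3} (based on Hardy's representation \eqref{eq.iter} and the Galaz Fontes--Solis bounds) that $|((\sC^{(1,w)})^m e_r)_n|\le a_m/(r-1)$ with $\sup_m a_m<\infty$, and then sums to obtain $\|(\sC^{(1,w)})^m x\|_{1,w}\le \|(a_m)_{m\in\N}\|_\infty\,\cU_w\,\|x\|_{1,w}$ for $x\in X_1(w)$; uniform mean ergodicity is then deduced from Lin's theorem \cite{Li}, with Proposition~\ref{L.range_closed} entering only through (iv)$\Rightarrow$(i) (closed range). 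You instead locate the spectrum: your verification of \eqref{e.spe_car} is correct and uniform in $m$ (the three estimates $n^{\alpha-1}\le (m+1)^{\alpha-1}$ for $n\ge m+1$, $\sum_{n\ge m}w(n)\le (m-1)\,\cU_w\, w(m)$ from \eqref{eq.asso} with $r=m\ge 2$, and $m^\alpha(m+1)^{1-\alpha}\ge m$ hold for every $\alpha={\rm Re}(1/\lambda)<1$, including $\alpha\le 0$, and the $m=1$ term is finite because $w\in\ell_1$), so Theorem~\ref{t:sp-dec-N}(ii) gives $\sigma(\sC^{(1,w)})\su\{\lambda\colon |\lambda-\frac{1}{2}|\le\frac{1}{2}\}$; Proposition~\ref{L.range_closed} enters via (iv)$\Rightarrow$(iii), which together with Lemma~\ref{L.Mean-0}(iv) and the Open Mapping Theorem removes $1$ from $\sigma(T_1)$, where $T_1:=\sC^{(1,w)}|_{X_1(w)}$; since $1$ is the only point of that disc of modulus $\ge 1$ and $\sigma(T_1)$ is compact, $r(T_1)<1$, and the standard identity $\sigma(\sC^{(1,w)})=\{1\}\cup\sigma(T_1)$ for a complemented invariant decomposition plus the geometric decay $\|T_1^m\|\to 0$ finish both claims at once. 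Your route buys two things: it avoids both external inputs of the paper (\cite{GF} and \cite{Li}), and it actually proves more than the statement, namely operator-norm (indeed geometric-rate) convergence of $(\sC^{(1,w)})^m$ to the projection onto ${\rm span}\{\mathbf{1}\}$ along $X_1(w)$ --- whereas Theorem~\ref{T-conv}(i) records only $\tau_s$-convergence --- as well as the spectral inclusion $\sigma(\sC^{(1,w)})\su\{\lambda\colon|\lambda-\frac{1}{2}|\le\frac{1}{2}\}$ whenever $\cU_w<\infty$, a fact of independent interest consistent with Example~\ref{E.an} (there $\cU_w=\infty$ and the spectrum fills the disc). What the paper's route buys is an explicit quantitative bound on $\sup_m\|(\sC^{(1,w)})^m\|$ in terms of $\cU_w$ and $\sup_m a_m$, with no need to determine any part of the spectrum.
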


\begin{proof} It was already noted that $\cU_w<\infty$ implies  $w\in \ell_1$ and $\sC^{(1,w)}\in\cL(\ell_1(w))$.  From the proof of  Lemma \ref{L-Mean3} (and its notation) recall that
\begin{equation}\label{eq.re}
|(\sC^{(1,w)})^me_r)_n|\leq \frac{a_m}{r-1},\quad n\in\N,
\end{equation}
for $m\in\N$ and $r\geq 2$. Moreover, $a_m\geq\frac{1}{2}f_m(\frac{1}{2})>0$ and $\lim_{m\to\infty}a_m=0$.

Since $(\sC^{(1,w)})^m\mathbf{1}=\mathbf{1}$ for all $m\in\N$ and $\ell_1(w)={\rm span}\{\mathbf{1}\}\oplus X_1(w)$ (by Lemma \ref{L.Mean-0} and Lemma \ref{L-Mean2}), to show that $\sC^{(1,w)}$ is  power bounded it suffices to show that $\sup_{m\in\N}\|(\sC^{(1,w)})^mx\|_{1,w}<\infty$ for each $x=(0,x_2,x_3,\ldots)\in X_1(w)$. So,
 fix such an  $x\in X_1(w)$, in which case  $x=\sum_{r=2}^\infty x_re_r$. Recall from the proof Lemma \ref{L-Mean3}, for each $m\in\N$, that $(\sC^{(1,w)})^me_r)_n=0$ if $r\geq 2$ and $1\leq n<r$. Accordingly, for each $n,\ m\in\N$ we have
\begin{eqnarray*}
w(n)|((\sC^{(1,w)})^mx)_n|&\leq & w(n)\sum_{r=2}^\infty  |x_r|\cdot |((\sC^{(1,w)})^me_r)_n|\\
&= & w(n)\sum_{r=2}^n |x_r|\cdot |((\sC^{(1,w)})^me_r)_n|.
\end{eqnarray*}
Hence, for every $m,\ n\in\N$ and $x\in X_1(w)$ it follows that
\begin{eqnarray*}
\|(\sC^{(1,w)})^mx\|_{1,w}&=&\sum_{n=2}^\infty w(n)|((\sC^{(1,w)})^mx)_n|\\
&\leq & \sum_{n=2}^\infty w(n)\sum_{r=2}^n |x_r|\cdot|((\sC^{(1,w)})^me_r)_n|\\
&=&\sum_{r=2}^\infty w(r) |x_r|\frac{1}{w(r)}\sum_{n=r}^\infty w(n)|((\sC^{(1,w)})^me_r)_n|\\
&\leq & \|(a_m)_{m\in\N}\|_\infty\|x\|_{1,w}\sup_{r\geq 2}\frac{1}{(r-1) w(r)}\sum_{n=r}^\infty w(n),
\end{eqnarray*}
where the last inequality relies on \eqref{eq.re}. An examination of \eqref{eq.asso} now shows that $\cU_w<\infty$ implies that $\sup_{m\in\N}\|(\sC^{(1,w)})^mx\|_{1,w}<\infty$ for each $x\in X_1(w)$. As already noted, this yields that $\sC^{(1,w)}$ is  power bounded.

Using now the fact that $\sC^{(1,w)}\in \cL(\ell_1(w))$ is  power bounded, we have $\lim_{m\to\infty} \frac{\|\sC^{(1,w)})^m\|}{m}=0$. Since also the range of $I-\sC^{(1,w)}$ is a closed subspace of $\ell_1(w)$ (cf. Proposition \ref{L.range_closed}), we can apply a result of Lin, \cite[Theorem]{Li}, to conclude that $\sC^{(1,w)}$ is  uniformly mean ergodic.
\end{proof}

\begin{remark}\label{R.UM1}\rm  Let $w$ be a bounded, strictly positive sequence such that $\sC^{(1,w)}\in \cL(\ell_1(w))$ and $\lim_{m\to\infty} \frac{\|(\sC^{(1,w)})^m\|}{m}=0$. Then $\sC^{(1,w)}$ is uniformly mean ergodic if and only if  $\cU_w<\infty$. Indeed, by Proposition \ref{P.SC-Bound} the condition $\cU_w<\infty$ implies uniform mean ergodicity. On the other hand, if $\sC^{(1,w)}$ is uniformly mean ergodic (in which case $w\in\ell_1$ by Lemma \ref{L-Mean1}), then Lin's theorem, \cite{Li}, ensures that $I-\sC^{(1,w)}$ has closed range in $\ell_1(w)$. Hence, $\cU_w<\infty$; see Proposition \ref{L.range_closed}.
\end{remark}

\begin{prop}\label{P.SC-Bound-C} Let $w$ be a bounded, strictly positive sequence such that $\sC^{(1,w)}\in \cK(\ell_1(w))$. Then necessarily $\cU_w<\infty$.

In particular, $\sC^{(1,w)}$ is both power bounded and uniformly mean ergodic.
\end{prop}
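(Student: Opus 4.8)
The plan is to derive $\cU_w<\infty$ directly from the compactness criterion and then read off the ``In particular'' statement from Proposition \ref{P.SC-Bound}. First I would record the two consequences of compactness that are available. By Proposition \ref{P1-C}(ii) (applied with $v=w$) the quantities
\[
b_n:=\frac{1}{w(n)}\sum_{m=n}^\infty\frac{w(m)}{m},\qquad n\in\N,
\]
satisfy $\lim_{n\to\infty}b_n=0$, so in particular $(b_n)_{n\in\N}$ is bounded; and by Proposition \ref{P_SR}(ii) we have $w\in s$, whence $w\in\ell_1$ and $n^kw(n)\to 0$ for every $k\in\N$. Writing $c_m:=\frac{1}{mw(m+1)}\sum_{n=m+1}^\infty w(n)$, the goal \eqref{eq.asso} is precisely $\sup_{m}c_m<\infty$; note each $c_m$ is finite since $w\in\ell_1$.

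The key step is an Abel (summation-by-parts) identity relating the tail $\sum_{m\geq n}w(m)$ to the $b_n$. Setting $A_m:=\sum_{j=m}^\infty\frac{w(j)}{j}=w(m)b_m$ and using $w(m)=m\cdot\frac{w(m)}{m}=m(A_m-A_{m+1})$, I would sum by parts to obtain, for each $n\in\N$,
\[
\sum_{m=n}^\infty w(m)=nw(n)b_n+\sum_{m=n+1}^\infty w(m)b_m.
\]
The boundary term $Nw(N+1)b_{N+1}$ arising in the finite partial sums tends to $0$ because $w\in s$ forces $Nw(N+1)\to 0$ while $b_{N+1}$ stays bounded, and the tail $\sum_{m\geq n+1}w(m)b_m$ converges since it is dominated by $\|b\|_\infty\sum_m w(m)<\infty$. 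Applying this identity with $n=m+1$ and dividing by $mw(m+1)$ gives the decomposition
\[
c_m=\frac{(m+1)b_{m+1}}{m}+\frac{1}{mw(m+1)}\sum_{k=m+2}^\infty w(k)b_k.
\]

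The main obstacle is that the second term looks circular: bounding $b_k$ by a constant only reproduces (a multiple of) $c_m$ itself, because $\sum_{n\geq m+1}w(n)=mw(m+1)c_m$. This is resolved by a bootstrap that exploits $b_n\to 0$ rather than mere boundedness. Choose $N$ with $b_n<\tfrac12$ for all $n\geq N$. Then for $m\geq N$ the first term is at most $2b_{m+1}<1$ (as $\tfrac{m+1}{m}\leq 2$), while
\[
\frac{1}{mw(m+1)}\sum_{k=m+2}^\infty w(k)b_k\leq \frac{1}{2mw(m+1)}\sum_{k=m+2}^\infty w(k)\leq \frac{1}{2mw(m+1)}\sum_{n=m+1}^\infty w(n)=\tfrac12 c_m.
\]
Hence $c_m<1+\tfrac12 c_m$, i.e. $c_m<2$, for every $m\geq N$. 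Since the finitely many remaining values $c_1,\dots,c_{N-1}$ are finite, we conclude $\cU_w=\sup_m c_m<\infty$. Finally, Proposition \ref{P.SC-Bound} applied to this $w$ shows that $\sC^{(1,w)}$ is both power bounded and uniformly mean ergodic, completing the proof.
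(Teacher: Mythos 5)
Your proof is correct, but it takes a genuinely different route from the paper's. The paper's argument is purely operator-theoretic and very short: compactness gives $w\in s\su \ell_1$ (Proposition \ref{P_SR}(ii)); compactness also forces the range of $I-\sC^{(1,w)}$ to be closed in $\ell_1(w)$ (Riesz--Schauder theory, \cite[Lemma 3.4.20]{ME}); and Proposition \ref{L.range_closed} then converts closedness of that range into $\cU_w<\infty$, after which Proposition \ref{P.SC-Bound} finishes exactly as in your last line. You instead bypass the closed-range characterization entirely and extract $\cU_w<\infty$ directly from the quantitative compactness criterion $b_n\to 0$ of Proposition \ref{P1-C}(ii), via the summation-by-parts identity
\[
\sum_{m=n}^\infty w(m)=nw(n)b_n+\sum_{m=n+1}^\infty w(m)b_m
\]
and an absorption argument ($c_m<1+\tfrac12 c_m$, which is legitimate because each $c_m$ is finite when $w\in\ell_1$). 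Your steps check out: the boundary term $Nw(N+1)b_{N+1}$ vanishes because $w\in s$ gives $nw(n)\to 0$ (note that $w\in\ell_1$ alone would \emph{not} suffice for this, so your appeal to Proposition \ref{P_SR}(ii) is genuinely needed, not decorative), and the bootstrap correctly exploits $b_n\to 0$ rather than mere boundedness, which is what breaks the apparent circularity. As for what each approach buys: the paper's proof is shorter given that Proposition \ref{L.range_closed} has already been established, and it places the result inside standard Fredholm/Riesz theory; yours is elementary and self-contained (no closed-range theorem for compact perturbations of the identity) and is quantitative --- in fact, running your absorption argument with $b_n<\ve$ in place of $b_n<\tfrac12$ shows that $c_m\to 0$ as $m\to\infty$, which is strictly stronger than $\cU_w<\infty$.
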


\begin{proof} Proposition \ref{P_SR}(ii) shows $w\in\ell_1$. Moreover,  the compactness of $\sC^{(1,w)}$  implies that $(I-\sC^{(1,w)})(\ell_1(w))$ is
 closed in $\ell_1(w)$, \cite[Lemma 3.4.20]{ME}.  Now apply Proposition \ref{L.range_closed} to conclude that $\cU_w<\infty$.
 Hence, $\sC^{(1,w)}$ is  power bounded and uniformly mean ergodic by Proposition \ref{P.SC-Bound}.
\end{proof}

\begin{remark}\label{R.Limite-Compact}\rm According to Proposition \ref{P2-C}, $\sC^{(1,w)}$ is compact whenever  $\limsup_{n\to\infty}\frac{w(n+1)}{w(n)}\in [0,1)$. In particular, this is the case for $w=(n^\beta r^n)_{n\in\N}$ with $r\in (0,1)$ and $\beta\geq 0$, for $w=(\frac{1}{n^n})_{n\in\N}$ and for $w=(\frac{a^n}{n!})_{n\in\N}$ with $a>0$; see Examples \ref{E.COP}(i)-(iii). Proposition \ref{P.SC-Bound-C} implies in all cases that $\sC^{(1,w)}$ is  power bounded and uniformly mean ergodic. By the same reasoning the  Ces\`aro operator corresponding to each of the weights in (iv), (v), (vi) of Example \ref{E.COP} is power bounded and uniformly mean ergodic.
\end{remark}

\begin{example}\label{R.L-Compact}\rm (i) Consider $w_\alpha(n)=(\frac{1}{n^\alpha})_{n\in\N}$ for fixed $\alpha>0$.
 For $\alpha\in (0,1]$,   Remark \ref{R.43} implies that  $\sC^{(1,w_\alpha)}\in \cL(\ell_1(w_\alpha))$ is \textit{not} Ces\`aro bounded and hence, is \textit{neither} mean ergodic nor power bounded.
The same is true for the weight $w$ in Remark \ref{R.VW}(ii).

On the other hand if $\alpha>1$, then it follows from Lemma \ref{L-stiam2} that $\sum_{n=m}^\infty\frac{1}{n^\alpha} \leq \frac{1}{(\alpha-1)(m-1)^{\alpha-1}}$. Thus, for each $m\geq 2$,
\[
\frac{1}{(m-1)w_\alpha(m)}\sum_{n=m}^\infty w_\alpha(n)\leq \frac{m^\alpha}{(\alpha-1)(m-1)^\alpha}\leq \frac{2^\alpha}{(\alpha-1)}
\]
and so $\cU_{w_\alpha}<\infty$.
Hence,   Proposition \ref{P.SC-Bound} implies that  $\sC^{(1,w_\alpha)}$ is power bounded and uniformly mean ergodic. However, $\sC^{(1,w_\alpha)}$ is \textit{not} compact; see Remark \ref{R.W_N}(iii). Observe that $w_\alpha\in \ell_1\setminus s$ for $ \alpha >1$.

(ii) Let $w\in s$ be the weight considered in Example \ref{ex.nocompact}. The claim is that $\cU_w<\infty$. To see this
fix $n\in\N$ with $n\geq 2$ and choose $i\in\N$ such that $2^i+1\leq n\leq 2^{i+1}$. Then
\[
\sum_{m=n}^\infty w(m)\leq \sum_{m=2^i+1}^\infty w(m)=\sum_{j=i}^\infty\sum_{m=2^j+1}^{2^{j+1}}w(m).
\]
Since each sum $\sum_{m=2^j+1}^{2^{j+1}}(\ldots )$ has $2^j$ terms and $w(m)=\frac{1}{2^j2^{(j+1)2^{j+1}}}$ for all $2^j+1\leq m\leq 2^{j+1}$, it follows that
\[
\sum_{m=n}^\infty w(m)\leq\sum_{j=i}^\infty 2^j\frac{1}{2^j2^{(j+1)2^{j+1}}}=\sum_{j=i}^\infty \frac{1}{2^{(j+1)2^{j+1}}}.
\]
As $\frac{1}{(n-1)}\leq \frac{1}{2^i}$ and $\frac{1}{w(n)}=2^i2^{(i+1)2^{i+1}}$, the previous inequality implies
\begin{eqnarray*}
 \frac{1}{(n-1)w(n)}\sum_{m=n}^\infty w(m)& \leq & \frac{1}{2^i}\cdot 2^i2^{(i+1)2^{i+1}}\sum_{j=i}^\infty \frac{1}{2^{(j+1)2^{j+1}}}\\
& = & 2^{(i+1)2^{i+1}}\sum_{j=i}^\infty\frac{1}{2^{j+1}}\cdot \frac{1}{2^{(j+1)(2^{j+1}-1)}}.
\end{eqnarray*}
But, $\frac{1}{2^{(j+1)(2^{j+1}-1)}}\leq \frac{1}{2^{(i+1)(2^{i+1}-1)}}$ for all $j\geq i$ and so
\[
\frac{1}{(n-1)w(n)}\sum_{m=n}^\infty w(m)\leq 2^{(i+1)2^{i+1}}\cdot \frac{1}{2^{(i+1)(2^{i+1}-1)}}\sum_{j=i}^\infty\frac{1}{2^{j+1}}=2.
\]
According to \eqref{eq.asso} we have $\cU_w\leq 2$. Then Proposition \ref{P.SC-Bound} shows that $\sC^{(1,w)}$ is power bounded and uniformly mean ergodic. But, $\sC^{(1,w)}$ is \textit{not} compact; see Fact 3 in Example \ref{ex.nocompact}.
\end{example}

The final example  exhibits  features different to the previous examples (eg. $\cU_w=\infty$). Its spectrum is also precisely determined.

\begin{example}\label{E.an}\rm  Let $\alpha>1$. Define the bounded, strictly positive weight $w$ by $w(1)=w(2):=1$ and $w(n):=\frac{1}{i^\alpha 2^{i-1}}$ for $2^{i}+1\leq n\leq  2^{i+1}$ and $i\in\N$. We record various properties of $w$.

\textit{Fact 1. $w\in \ell_1$, but $w\not\in s$}.

Define $v:=\left(\frac{1}{n\log^\alpha (n+1)}\right)_{n\in\N}$. It is shown in \eqref{eq.comp_1} of Remark \ref{R.3n}(iv) that $A_1 v\leq w \leq A_2v$ for positive constants $A_1, \ A_2$. The integral test for convergence of series implies that $v\in \ell_1$ and hence, also $w\in \ell_1$. Clearly, $v\not \in s$ and so also $w\not\in s$.

%

\textit{Fact 2. $\sC^{(1,w)}\in\cL(\ell_1(w))$.}

This was established in Remark \ref{R.3n}(iv).


\textit{Fact 3. $\cU_w=\infty$.}

Fix  $m\geq 3$ and choose $i\in\N$ to satisfy $2^i+1\leq m\leq 2^{i+1}$. Then
\begin{eqnarray*}
& & \frac{1}{(m-1) w(m)}\sum_{n=m}^\infty w(n)\geq   \frac{1}{(m-1) w(m)}\sum_{n=2^{i+1}+1}^\infty  w(n)\\
& & =\frac{1}{(m-1) w(m)}\sum_{j=i+1}^\infty\sum_{n=2^j+1}^{2^{j+1}}\frac{1}{j^\alpha 2^{j-1}}
= \frac{1}{(m-1) w(m)}\sum_{j=i+1}^\infty\frac{2^j}{j^\alpha 2^{j-1}}.
\end{eqnarray*}
Since $\frac{1}{(m-1)}\geq \frac{1}{2^{i+1}-1}$ and $\frac{1}{w(m)}=i^\alpha 2^{i-1}$, it follows that
\[
 \frac{1}{(m-1) w(m)}\sum_{n=m}^\infty w(n)\geq \frac{1}{(2^{i+1}-1)}i^\alpha 2^{i-1} 2\sum_{j=i+1}^\infty\frac{1}{j^\alpha}
 =  \frac{i^\alpha 2^i}{(2^{i+1}-1)}\sum_{j=i+1}^\infty \frac{1}{j^\alpha}.
\]
But, $\sum_{j=i+1}^\infty\frac{1}{j^\alpha}\geq \int_{i+1}^\infty \frac{dx}{x^\alpha}=\frac{1}{(\alpha-1)(i+1)^{\alpha-1}}$ and so
\begin{eqnarray*}
& & \frac{1}{(m-1) w(m)}\sum_{n=m}^\infty w(n)\geq \frac{i^\alpha 2^i}{(\alpha-1)(i+1)^{\alpha-1}(2^{i+1}-1} \\
& & =  \frac{i}{(\alpha-1)}\cdot\left(\frac{i}{i+1}\right)^{\alpha-1}\frac{2^i}{2^{i+1}-1}.
\end{eqnarray*}
Since $\lim_{i\to\infty} \left(\frac{i}{i+1}\right)^{\alpha-1}=1$ and $\frac{2^i}{2^{i+1}-1}=\frac{1}{2-2^{-i}}>\frac{1}{2}$, it follows from the previous inequality that
\[
\cU_w=\sup_{m\geq 2}\frac{1}{(m-1) w(m)}\sum_{n=m}^\infty w(n)= \infty.
\]

\textit{Fact 4. $\sC^{(1,w)}$ is not compact.}

This is immediate from Proposition \ref{P.SC-Bound-C}.

\textit{Fact 5. The range of  $I-\sC^{(1,w)}$ is  not closed in $\ell_1(w)$.}

See Facts 1 and 2 and Proposition \ref{L.range_closed}.

\textit{Fact 6. $\ell_1(w)={\rm span}\{\mathbf{1}\}\oplus \ov{(I-\sC^{(1,w)})(\ell_1(w))}$.}

Follows from Facts 1 and 2 and Lemma \ref{L-Mean2}.

\textit{Fact 7. $S_w(1)=(1,\infty)$ and $s_1=1$.}

Fix  $s>0$. From the definition of $w$ we have
\begin{eqnarray*}
\sup_{n\in\N}\frac{1}{n^sw(n)}&=&\max\left\{1, \frac{1}{2^s},\sup_{i\in\N}\left(\max_{n=2^i+1,\ldots,2^{i+1}}\frac{1}{n^sw(n)}\right)\right\}\\
&=&\max\left\{1, \frac{1}{2^s},\sup_{i\in\N}i^\alpha 2^{i-1}\left(\max_{n=2^i+1,\ldots,2^{i+1}}\frac{1}{n^s}\right)\right\}.
\end{eqnarray*}
Since  $\frac{1}{2^{s(i+1)}}\leq \frac{1}{n^s}\leq \frac{1}{2^{si}}$ for all $2^i+1\leq n\leq 2^{i+1}$  and $i\in\N$, it follows that
\[
\max\left\{1, \frac{1}{2^s},\sup_{i\in\N}\frac{1}{2^{s+1}}\frac{i^\alpha}{2^{i(s-1)}}\right\}\leq \sup_{n\in\N}\frac{1}{n^sw(n)}\leq  \max\left\{1, \frac{1}{2^s},\sup_{i\in\N}\frac{1}{2}\cdot\frac{i^\alpha}{2^{i(s-1)}}\right\}.
\]
Accordingly, $\sup_{n\in\N}\frac{1}{n^sw(n)}<\infty$ if and only if $s>1$, i.e.,  $S_w(1)=(1,\infty)$. Hence, $s_1=\inf S_w(1)=1$.

\textit{Fact 8. $\sigma_{pt}(\sC^{(1,w)})=\{1\}$ and $\sigma(\sC^{(1,w)})=\{\lambda\in \C\colon |\lambda-\frac{1}{2}|\leq \frac{1}{2}\}$. }

Since $w\in\ell_1$, we have $1\in \sigma_{pt}(\sC^{(1,w)})$; see Remark \ref{R.1n}. Moreover, $s_1=1$ and so Proposition \ref{P.S_R}(i) implies that $t_0\leq 1$. Then \eqref{e:point} shows that $\sigma_{pt}(\sC^{(1,w)})\su \{1,\frac{1}{2}\}$. Hence, to establish that $\sigma_{pt}(\sC^{(1,w)})=\{1\}$ it suffices to show that $\frac{1}{2}\not\in \sigma_{pt}(\sC^{(1,w)})$, i.e., that $1\not\in R_w$ (see \eqref{e:point}). That this is indeed so follows from the inequalities
\begin{eqnarray*}
\sum_{n=1}^\infty n^1w(n) & = & 2+\sum_{i=1}^\infty\sum_{n=2^i+1}^{2^{i+1}}\frac{n}{i^\alpha 2^{i-1}}\geq 2 +\sum_{i=1}^\infty\sum_{n=2^i+1}^{2^{i+1}}\frac{2^i}{i^\alpha 2^{i-1}}\\
& = & 2+2\sum_{i=1}^\infty\sum_{n=2^i+1}^{2^{i+1}}\frac{1}{i^\alpha }\geq \sum_{i=1}^\infty\frac{2^i}{i^\alpha}=\infty.
\end{eqnarray*}
Hence, the point spectrum $\sigma_{pt}(\sC^{(1,w)})=\{1\}$.

Since $s_1=1$, it follows from Proposition \ref{t:sp-dec}(ii) that
\begin{equation}\label{eq.inclusionO}
\left\{\lambda\in \C\colon \left|\lambda-\frac{1}{2}\right|\leq \frac{1}{2}\right\}\su \sigma(\sC^{(1,w)}).
\end{equation}
For the reverse inclusion,
let $\lambda\in \C$ satisfy $\left|\lambda-\frac{1}{2}\right|> \frac{1}{2}$ and set $\beta:={\rm Re}\left(\frac{1}{\lambda}\right)$.
 Then $\beta<1$, i.e., $(1-\beta)>0$.
Fix  $m\geq 3$ and select $i\in\N$ such that $(2^i+1)< m+1<2^{i+1}$ (note that also  $(2^i+1)\leq m<2^{i+1}$). Then
\[
\sum_{n=m+1}^\infty\frac{w(n)}{n^{1-\beta}}\leq  \sum_{n=2^i+1}^\infty\frac{w(n)}{n^{1-\beta}}=\sum_{j=i}^\infty\sum_{n=2^j+1}^{2^{j+1}}\frac{1}{j^\alpha 2^{j-1}}\cdot\frac{1}{n^{1-\beta}}.
\]
Since $\frac{1}{w(m)}=i^\alpha 2^{i-1}$ with $\frac{1}{m^\beta}\leq (\frac{1}{2^i})^\beta$ and $\frac{1}{n^{1-\beta}}\leq \frac{1}{(2^j)^{1-\beta}}$ for $(2^i+1)\leq n\leq 2^{i+1}$, it follows that
\[
\frac{1}{m^\beta w(m)}\sum_{n=m+1}^\infty\frac{w(n)}{n^{1-\beta}} \leq  \frac{i^\alpha 2^{i-1}}{2^{i\beta}}\sum_{j=i}^\infty \frac{1}{j^\alpha 2^{j-1}}\cdot \frac{1}{(2^j)^{1-\beta}}\cdot 2^j.
\]
But, $\frac{i^\alpha}{j^\alpha}\leq 1$ for all $j\geq i$ and so, for all $m\geq 3$, we have
\[
\frac{1}{m^\beta w(m)}\sum_{n=m+1}^\infty\frac{w(n)}{n^{1-\beta}} \leq 2^{i(1-\beta)}\sum_{j=i}^\infty \left(\frac{1}{2^{1-\beta}}\right)^j=\frac{2^{1-\beta}}{2^{1-\beta}-1}.
\]
On the other hand, recalling that $w\in \ell_1$, we also have  $\frac{1}{ w(1)}\sum_{n=2}^\infty\frac{w(n)}{n^{1-\beta}}\leq \sum_{n=2}^\infty w(n)<\infty$  and $\frac{1}{ 2^\beta w(2)}\sum_{n=3}^\infty\frac{w(n)}{n^{1-\beta}}\leq \frac{1}{2^\beta}\sum_{n=1}^\infty w(n)<\infty$.
Accordingly,
\[
\sup_{m\in\N}\frac{1}{m^\beta w(m)}\sum_{n=m+1}^\infty\frac{w(n)}{n^{1-\beta}}<\infty
\]
and so Theorem \ref{t:sp-dec-N}(ii) implies that $\lambda\in \rho(\sC^{(1,w)})$. Hence, $\{\lambda\in\C\colon |\lambda-\frac{1}{2}|> \frac{1}{2}\}\su  \rho(\sC^{(1,w)})$ which implies that \eqref{eq.inclusionO} is an equality.
\end{example}

It would be interesting to know whether or not $\sC^{(1,w)}$ (equivalently, $\sC^{(1,v)}$; see Fact 1) is power bounded.

Concerning the dynamics of a continuous linear operator $T$ defined on a separable Banach space $X$,  recall that $T$ is \textit{hypercyclic} if there exists $x\in X$ such that the orbit $\{T^nx\colon n\in\N_0\}$ is dense in $X$. If, for some $x\in X$, the projective orbit $\{\lambda T^nx\colon \lambda\in\C,\ n\in\N_0\}$ is dense in $X$, then $T$ is called \textit{supercyclic}. Clearly, hypercyclicity  implies supercyclicity.

\begin{prop} Let   $w$ be   a bounded, strictly positive sequence such that $\sC^{(1,w)}\in \cL(\ell_1(w))$. Then  $\sC^{(1,w)}$  is not supercyclic and so, not hypercyclic.
\end{prop}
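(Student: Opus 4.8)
The plan is to derive a contradiction from the classical dynamical principle that the dual of a supercyclic operator can have at most one eigenvalue, combined with the spectral information about $(\sC^{(1,w)})'$ already obtained in Section~3. Since $\ell_1(w)$ is separable (being isometrically isomorphic to $\ell_1$), supercyclicity and hypercyclicity are meaningful notions for $\sC^{(1,w)}$, and because hypercyclicity implies supercyclicity it suffices to rule out supercyclicity. So first I would reduce the statement to showing that $\sC^{(1,w)}$ is not supercyclic.

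The eigenvalue input comes for free from earlier work: Step~2 in the proof of Theorem~\ref{t:sp-dec-N} shows that $\Sigma\su\sigma_{pt}((\sC^{(1,w)})')$, where $A:=(\sC^{(1,w)})'\in\cL(\ell_\infty(w^{-1}))$. In particular both $1$ and $\frac{1}{2}$ are eigenvalues of $A$, with eigenvectors $\phi,\psi\in\ell_\infty(w^{-1})$ given by the formula \eqref{e:aut}, say $A\phi=\phi$ and $A\psi=\frac{1}{2}\psi$. As they correspond to distinct eigenvalues, $\phi$ and $\psi$ are linearly independent in $\ell_\infty(w^{-1})=(\ell_1(w))'$.

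Next I would establish (in the form needed) the supercyclicity obstruction. Suppose, for contradiction, that $x\in\ell_1(w)$ is a supercyclic vector, so that the projective orbit $O:=\{\lambda(\sC^{(1,w)})^nx:\lambda\in\C,\ n\geq0\}$ is dense in $\ell_1(w)$. The pair $(\phi,\psi)$ defines a continuous linear map $J\colon\ell_1(w)\to\C^2$ by $Jy:=(\langle y,\phi\rangle,\langle y,\psi\rangle)$, which is surjective precisely because $\phi,\psi$ are linearly independent; being open, $J$ carries the dense set $O$ onto a dense subset $J(O)$ of $\C^2$. Writing $a:=\langle x,\phi\rangle$ and $b:=\langle x,\psi\rangle$ and using $\langle(\sC^{(1,w)})^nx,\phi\rangle=\langle x,A^n\phi\rangle=a$ together with $\langle(\sC^{(1,w)})^nx,\psi\rangle=\langle x,A^n\psi\rangle=2^{-n}b$, I obtain $J(O)=\{\lambda(a,\,2^{-n}b):\lambda\in\C,\ n\geq0\}$. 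This set cannot be dense in $\C^2$: if $a=0$ or $b=0$ it already lies in a coordinate axis, while if $a,b\neq0$ then every point of $J(O)$ with nonzero first coordinate has coordinate ratio equal to $\frac{b}{a}2^{-n}$ for some $n$, so these ratios are confined to the bounded countable set $\{\frac{b}{a}2^{-n}:n\geq0\}$, which is not dense in $\C$. This contradicts the density of $J(O)$, and hence $\sC^{(1,w)}$ is not supercyclic; a fortiori it is not hypercyclic.

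The computation of $J(O)$ is routine once the eigenvectors from \eqref{e:aut} are in hand, so I expect the only delicate point to be the elementary density argument in $\C^2$: one must separately dispose of the degenerate cases $a=0$ and $b=0$, and confirm that a geometric progression of ratios is never dense in $\C$. The remainder is a direct application of the spectral data for $(\sC^{(1,w)})'$ already established in Theorem~\ref{t:sp-dec-N}.
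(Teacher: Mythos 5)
Your proof is correct, and it starts from exactly the same spectral input as the paper: Step~2 in the proof of Theorem~\ref{t:sp-dec-N}, which places $\Sigma$ inside $\sigma_{pt}((\sC^{(1,w)})')$. Where you diverge is in how that input is converted into a dynamical obstruction. The paper simply quotes Theorem~3.2 of Ansari--Bourdon \cite{AB} (the adjoint of a supercyclic operator has at most one eigenvalue) and is done in two lines; you instead prove the special case you need by hand, using only the two dual eigenvalues $1$ and $\tfrac12$ with eigenvectors $\phi,\psi$ from \eqref{e:aut}, the open (hence dense-set-preserving) map $J\colon y\mapsto(\langle y,\phi\rangle,\langle y,\psi\rangle)$ onto $\C^2$, and the observation that the image of a projective orbit, $\{\lambda(a,2^{-n}b)\colon\lambda\in\C,\ n\geq0\}$, has its coordinate ratios confined to the bounded countable set $\{(b/a)2^{-n}\colon n\geq0\}$ and so cannot be dense. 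All the steps check out: the identity $\langle(\sC^{(1,w)})^nx,\psi\rangle=2^{-n}\langle x,\psi\rangle$ follows from the duality, surjectivity of $J$ is equivalent to linear independence of $\phi,\psi$ (automatic for distinct eigenvalues), and the ratio argument correctly handles the degenerate cases $a=0$, $b=0$. What your route buys is self-containment: you prove the two-eigenvalue instance of the Ansari--Bourdon obstruction rather than citing it, at the cost of a longer argument; what the paper's route buys is brevity and the (unused, but free) extra information that the dual point spectrum is in fact infinite.
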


\begin{proof}
By Step 2 in the proof of Theorem \ref{t:sp-dec-N}
the \textit{infinite} set  $\Sigma\su \sigma_{pt}((\sC^{(1,w)})')$.  By Theorem 3.2 of  \cite{AB},  $\sC^{(1,w)}$ cannot be \textit{supercyclic}.
\end{proof}

\bigskip

\textbf{Acknowledgements.} The research of the first two authors was partially supported by
the projects MTM2013-43540-P and MTM2016-76647-P.

The second author acknowledges the support of the ``International Visiting Professor Program 2016'' from the Ministry of Education, Science and Art, Bavaria (Germany).

\bigskip
\bibliographystyle{plain}

\end{document}